 \newcommand{\Addresses}{{
  \bigskip
  \footnotesize
  Kenneth~Ward, \textsc{Department of Mathematics and Statistics, American University,
    Washington, DC 20016}\par\nopagebreak
  \textit{E-mail address}: \texttt{kward@american.edu}
  }}
\title{The $q$-Unit Circle}
\author{Kenneth Ward}
\date{\today}
\newtheorem{thm}{Theorem}
\newtheorem*{thm*}{Theorem}
\newtheorem{lem}{Lemma}
\newtheorem{cor}{Corollary}
\newtheorem{prop}{Proposition}
\theoremstyle{definition}
\newtheorem{defn}{Definition}
\newtheorem{rem}{Remark}
\newcommand*{\bfrac}[2]{\genfrac{[}{]}{0pt}{}{#1}{#2}}
\newcommand\T{\rule{0pt}{2.6ex}}
\newcommand\B{\rule[-1.2ex]{0pt}{0pt}}
\begin{document}
\maketitle
\begin{abstract} We define the unit circle for global function fields. We demonstrate that this unit circle (endearingly termed the \emph{$q$-unit circle}, after the finite field $\mathbb{F}_q$ of $q$ elements) enjoys all of the properties akin to the classical unit circle: center, curvature, roots of unity in completions, integrality conditions, embedding into a finite-dimensional vector space over the real line, a partition of the ambient space into concentric circles, M\"{o}bius transformations, a Dirichlet approximation theorem, a reciprocity law, and much more. We extend the exponential action of Carlitz by polynomials to an action by the real line. We show that mutually tangent horoballs solve a Descartes-type relation arising from reciprocity. We define the hyperbolic plane, which we prove is uniquely determined by the $q$-unit circle. We give the associated modular forms and Eisenstein series.
\end{abstract}
\tableofcontents
\section{Introduction}
This paper grew out of a desire to understand what the appropriate analogy to circles in classical hyperbolic geometry would be in global function fields. For example, Descartes' circles \cite{Des}, Soddy spheres \cite{Sod}, the curvature conjectures and results of Graham et al. on Apollonian packings \cite{Gra}, Schmidt arrangements of circles in the plane \cite{Sta}, Conway's topograph \cite{Con}, and so on, depend heavily upon quadratic equations over $\mathbb{Q}$ and the arithmetic structure of quadratic number fields as global fields. These structures all make special use of circles, so it is natural to ask whether we can imagine similar objects over the rational field $\mathbb{F}_q(T)$. One can mimic classical quadratic constructions directly, which does give a glimpse into a structure theory, but it is incomplete: For example, there are types of discriminants which do not appear over $\mathbb{Q}$, special difficulties in characteristic $2$, and most importantly, the classical notion of a circle, while formally definable in extensions of degree $2$ over $\mathbb{F}_q(T)$, lacks a geometric motivation. 

The cyclotomy of global function fields manifests in three distinct constructions: the constant field extensions $\mathbb{F}_{q^d}/\mathbb{F}_q$, the union of Carlitz modules for polynomials in $\mathbb{F}_q[T]$, and a wild part $L_\infty$ at infinity, which together yield the Kronecker-Weber theorem for $\mathbb{F}_q(T)$ (see \cite{Vil2} for an elegant elementary proof of this fact). It is the second component in this list which we study in this paper, which generate the cyclotomic function fields $K_{q,M}$ ($M \in \mathbb{F}_q[T]$) over $\mathbb{F}_q(T)$. Based on the principle that the unit circle derives its structure from the roots of unity in characteristic zero, we aimed to determine whether a similar principle holds over the global fields of characteristic $p > 0$. This paper is the result of that investigation. We construct a ``$q$-unit circle", which we denote by $\mathbb{S}_q$ (inspired by the notation $S^1$ for the classical unit circle), defined over $\mathbb{F}_q(T)$. The structure of the $q$-unit circle $\mathbb{S}_q$ depends only upon the choice of finite field $\mathbb{F}_q$. 

The most important objective of this paper is to show that the definition of $\mathbb{S}_q$ is unequivocally the \emph{correct} one: That is, it should be the only object which exhibits all of the natural properties which one would expect of the unit circle. In particular, embedded in the structure of $\mathbb{S}_q$ should be at least the following: \\

\noindent\emph{Henceforth, ``roots of $q$-unity" are elements of the Carlitz modules for $\mathbb{F}_q[T]$.}

\begin{itemize}
 \item All roots of $q$-unity lie in $\mathbb{S}_q$ and are dense in it;
 \item $\mathbb{S}_q$ should be compact;
 \item The intersection of $\mathbb{S}_q$ with the real line should be a natural finite set;
 \item There should be an action of exponentiation on $\mathbb{S}_q$ by any real number;
 \item An action of ``multiplication" on $\mathbb{S}_q$ by real numbers should fix the center and attain any possible curvature;
 \item $\mathbb{S}_q$ should generate a reciprocity law;
 \item There should be a covering map on $\mathbb{S}_q$ with structure endowed by the roots of $q$-unity;
  \item $\mathbb{S}_q$ should live in a vector space of finite dimension;
  \item The vector space containing $\mathbb{S}_q$ should be the algebraic closure of the real line relative to reciprocity;
 \item Integral elements with conjugates in $\mathbb{S}_q$ should be roots of $q$-unity;
 \item Elements of $\mathbb{S}_q$ which are not roots of $q$-unity should have ``irrational" exponents satisfying a Dirichlet approximation theorem;
 \item $\mathbb{S}_q$ should be nowhere dense and of measure zero in the vector space where it lives;
 \item M\"{o}bius transformations should act on $\mathbb{S}_q$;
 \item An analogue of the Poincar\'{e} disk should be consistent with $\mathbb{S}_q$;
 \item Tangent circles should satisfy relations derived from the reciprocity law of $\mathbb{S}_q$;
 \item Elements of $\mathbb{S}_q$ should allow construction of a normal integral basis in the space where $\mathbb{S}_q$ resides;
  \item The period of the exponential function should belong to the space generated by $\mathbb{S}_q$;
   \item There should be a natural hyperbolic plane, also of finite dimension, associated with $\mathbb{S}_q$; and
   \item There should be modular forms and Eisenstein series which are consistent with the lattice structure in the space generated by $\mathbb{S}_q$.
 \end{itemize}
 This paper proves all of these.
 
 In \S 2, we introduce some notation and list some of the known analogies between number fields and function fields for convenience of the reader. We begin in \S 3.1 with some results demonstrating which roots of $q$-unity are contained in $P$-adic ($P \in \mathbb{F}_q[T]$) completions of $\mathbb{F}_q(T)$, which as it turns out is precisely the same for $\mathbb{S}_q$ as the classical unit circle. We then demonstrate that the circle $\mathbb{S}_q$ is ``perpendicular" to the real line, which here is $(\mathbb{F}_q(T))_\infty$, i.e., the completion of the rational field $\mathbb{F}_q(T)$ at the infinite place $\mathfrak{p}_\infty$ associated with the degree function. In \S 3.2, we give the covering map on $\mathbb{S}_q$, in analogy to classical homotopy, and we prove that $\mathbb{S}_q$ has finite dimension. The M\"{o}bius maps are a bit more delicate in this situation, as the (Carlitz) exponential action is additive for global function fields. In \S 3.3, we complete the Carlitz action on $\mathbb{S}_q$. In \S 3.4, we give the reciprocity laws and the analogue of quadratic forms, demonstrating that $\mathbb{S}_q$ lives in a space which contains all solutions to a form derived from the reciprocity law. We also prove that all conjugates of an element must map into $\mathbb{S}_q$ if it is a root of $q$-unity, in light of the classical result that if the conjugates of a complex number lie on the boundary of the unit disk, then that number is a root of unity. In \S 3.5, we prove that $\mathbb{S}_q$ is nowhere dense, compact, and of measure zero in its vector space $V_q$, as well as a description of the image of $\mathbb{S}_q$ via the reciprocity map. In \S 3.6, we prove the Descartes relation for mutually tangent horoballs on a hyperplane using the form from $\mathbb{S}_q$ introduced in \S 3.4 and give the (perfect) analogy between the Farey-Ford circle packing of the Poincar\'{e} disk \cite[p. 31]{Con} and packings on $\mathbb{P}^1((\mathbb{F}_q(T))_\infty)$ which satisfy the Descartes relation for global function fields. Finally, in \S 3.7, we define the hyperbolic plane $\mathfrak{H}_q$, which is uniquely determined by $q$ and of finite dimension over $(\mathbb{F}_q(T))_\infty$, and give the definitions of its modular forms and Eisenstein series.
 
Having established consistent analogies to both the classical unit circle and the hyperbolic plane, one now has a precise space of finite dimension over the real line in which to think about geodesics, tangency, automorphic forms, isotropy, buildings - and whatever else one wishes!

\section{Notation and basic analogies}
We denote by $S^1$ the complex unit circle $$S^1 = \{ z \in \mathbb{C} \; | \; |z| = 1 \}.$$ In our constructions, it will be helpful to view $S^1$ as the topological completion of the collection $\mathbb{M}$ of all complex roots of unity, i.e., \begin{equation}\label{allrootsofunity} \mathbb{M} = \bigcup_{n \in \mathbb{N}} \{ z \in \mathbb{C} \; | \; z^n = 1 \}.\end{equation} As usual, we let $\mu_n :=\{ z \in \mathbb{C} \; | \; z^n = 1 \},$ denote the set of $n$th roots of unity in $\mathbb{C}$. For a prime integer $p$, let $\mathbb{Q}_p$ denote the field of $p$-adic numbers and $\mathbb{Z}_p$ its integer ring. We also let $q = p^r$ for a positive integer $r$, $\mathbb{F}_q$ the finite field of $q$ elements, and $T$ an indeterminate. Let $P\in\mathbb{F}_q[T]$ be irreducible, and let ${(\mathbb{F}_q(T))}_P$ denote the $P$-adic functions and ${(\mathbb{F}_q[T])}_P$ its integer ring. We denote by $(\mathbb{F}_q(T))_\infty$ the completion of $\mathbb{F}_q(T)$ at the infinite place $\mathfrak{p}_\infty$, which is equal to the pole divisor of the function $T \in \mathbb{F}_q(T)$. Let $\phi_q(u) := u^q$ be the Frobenius and $\mu_T(u) := uT$, multiplication by $T$. For a polynomial $$M = a_n T^n + \cdots + a_1 T +  a_0 \in \mathbb{F}_q[T]$$ the Carlitz exponential action is defined as (see \cite[p. 79]{Hay1} or \cite[Definition 12.2.1]{Vil})  \begin{equation} \label{expaction} M \cdot_q u := u_q^M := a_n (\phi_q + \mu_T)^n(u) + \cdots + a_1 (\phi_q + \mu_T)(u) + a_0 u.\end{equation} (We will see later why the notation $M \cdot_q u$ is useful.) This endows $\overline{\mathbb{F}_q(T)}$ with an $\mathbb{F}_q[T]$-module structure. The set of torsion points over $\mathbb{F}_q$ for the action by a polynomial $M \in \mathbb{F}_q[T]$ is denoted by $$\Lambda_{q,M}: = \{u \in \overline{\mathbb{F}_q(T)} \; | \; u_q^M = 0\}.$$ We let $K_{q,M}:=\mathbb{F}_q(T)(\Lambda_{q,M})$ be the field obtained from $\mathbb{F}_q(T)$ by adjoining $\Lambda_{q,M}$. The field $K_{q,M}$ is called the \emph{cyclotomic function field} for the polynomial $M$. Note that we use the subscript $K_{q,M}$, as this field does depend on $q$ - and that one does not obtain $K_{q^f,M}$ simply by adjoining the finite field $\mathbb{F}_{q^f}$ to $K_{q,M}$. The extension $K_{q,M}/\mathbb{F}_q(T)$ is Galois, with Galois group isomorphic to $(\mathbb{F}_q[T]/M)^*$ via the action defined in \eqref{expaction} \cite[Theorem 2.3]{Hay1}. The set $\Lambda_{q,M}$ is the function field analogue of the classical $n$th roots of unity $\mu_n$, and it is a cyclic additive $\mathbb{F}_q[T]$-module, just as $\mu_n$ is a cyclic group. We denote the collection of all such torsion points (for nonzero $M$) by $$\mathbb{T}_q = \bigcup_{M \in \mathbb{F}_q[T]\backslash\{0\}} \Lambda_{q,M}.$$ This is the analogue for function fields of the collection $\mathbb{M}$ of all roots of unity \eqref{allrootsofunity} . We collect some more of the relevant analogies in Table 1 below. 

\begin{table}[!ht] 
\centering
\small
\caption{
\bf{Some basic analogies between number and function fields.}}
\begin{center}
 \begin{tabular}[t]{||c |c||} 
 \hline
{\sc Number Fields} & {\sc Function Fields}\T\B \\ [0.5ex] 
 \hline\hline
$\mathbb{Q}$ & $\mathbb{F}_q(T)$ \T\B\\ 
 \hline
$n \in \mathbb{Z}$ & $M \in \mathbb{F}_q[T]$\T\B\\ 
 \hline
 $\mu_n$ & $\Lambda_M$\T\B\\
 \hline
 $\zeta^k$, the usual multiplication & $M = \sum_{i=0}^n a_i T^i$,\T \\& $u^M = \sum_{i=0}^n a_i (\phi_q + \mu_T)^i(u).$  \\
 & $\phi_q(u) = u^q$, $\mu_T(u) = uT$\B\\
 \hline
$\zeta^{j + k} = \zeta^j \zeta^k$ &  $u^{M + N} = u^M +u^N$\T\B \\
 \hline
$(\zeta^j)^k = \zeta^{jk}$ & $(u^M)^N = u^{MN}$ \T\B\\ 
 \hline
 $\text{Gal}(\mathbb{Q}(\mu_n)/\mathbb{Q})\simeq (\mathbb{Z}/n\mathbb{Z})^*$ & $\text{Gal}(K_{q,M}/\mathbb{F}_q(T)) \simeq (\mathbb{F}_q[T]/M)^*$\T\B \\ 
 \hline
 N/A & $A_T = \bigcup_M \mathbb{F}_q(T)(\Lambda_M)$\T \\&= maximal abelian / $\mathbb{F}_q(T)$ \\& $+$ tamely ramified at $\infty$ \B \\  \hline $p \in \mathbb{N}$ prime $\Rightarrow$ & $P \in \mathbb{F}_q[T]$ irreducible $\Rightarrow$ \T \\
$\mathbb{Q}(\mu_{p^m})/\mathbb{Q}$ totally ramified at $p$ & $K_{q,P^m}/\mathbb{F}_q(T)$ totally ramified at $P$ \\
Index $p^m(p-1)$ & Index $q^{d(m-1)}(q^d-1)$ ($\deg(P) = d$)\B \\ \hline
 $\mathbb{Q}(\mathbb{M})$ = maximal abelian / $\mathbb{Q}$  &  $A_T A_{1/T} \overline{\mathbb{F}_q}$ maximal abelian / $\mathbb{F}_q(T)$ \T\B\\ \hline
\end{tabular}
\end{center}
\vspace{.5cm}
\end{table}

\section{Structures}
\subsection{Hensel's lemma and roots of $q$-unity} 

Here, we give the function field analogues to statements about $p$-adic roots of unity. We first state Hensel's lemma for $\mathbb{Z}_p$ \cite[Theorem 3.4.1]{Gou}: \begin{lem}[Hensel's lemma] Suppose that $f(x) \in \mathbb{Z}_p[x]$, and that $\alpha \in \mathbb{Z}_p$ is such that $$f(\alpha) \equiv 0 \mod p \qquad \text{ and } \qquad f'(\alpha) \not\equiv 0 \mod p.$$ Then there exists a unique $\beta \in \mathbb{Z}_p$ such that $$f(\beta) = 0 \qquad \text{ and } \qquad \beta \equiv \alpha \mod p.$$ \end{lem}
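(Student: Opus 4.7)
The plan is to construct $\beta$ by $p$-adic Newton iteration and then prove uniqueness from the nonvanishing of $f'(\alpha)$ modulo $p$. The starting point is the observation that because $f'(\alpha) \not\equiv 0 \bmod p$, the element $f'(\alpha)$ has valuation $0$ and is therefore a unit in $\mathbb{Z}_p$; in particular, $1/f'(\alpha) \in \mathbb{Z}_p$. Setting $\alpha_0 := \alpha$, I would define
\[
\alpha_{n+1} \;:=\; \alpha_n - \frac{f(\alpha_n)}{f'(\alpha_n)},
\]
and prove by induction on $n$ the four simultaneous claims: (i) $\alpha_n \in \mathbb{Z}_p$; (ii) $\alpha_n \equiv \alpha \bmod p$, so $f'(\alpha_n)$ remains a unit in $\mathbb{Z}_p$; (iii) $f(\alpha_n) \equiv 0 \bmod p^{n+1}$; and (iv) $\alpha_{n+1} \equiv \alpha_n \bmod p^{n+1}$.

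The inductive step rests on the finite Taylor expansion, valid for any $f \in \mathbb{Z}_p[x]$ and any $h \in \mathbb{Z}_p$:
\[
f(\alpha_n + h) \;=\; f(\alpha_n) + h\,f'(\alpha_n) + h^2\, g(\alpha_n,h),
\qquad g(\alpha_n,h) \in \mathbb{Z}_p .
\]
Substituting $h = -f(\alpha_n)/f'(\alpha_n)$, whose $p$-adic valuation is at least $n+1$ by the induction hypothesis, the linear terms cancel and one sees immediately that $v_p(f(\alpha_{n+1})) \geq 2(n+1) \geq n+2$, which upgrades (iii) from level $n$ to level $n+1$; the congruence (iv) and the preservation of (i), (ii) follow from the same computation.

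Having established these, the sequence $\{\alpha_n\}$ is Cauchy in the complete metric space $\mathbb{Z}_p$ (consecutive differences have valuation tending to infinity), so it converges to a limit $\beta \in \mathbb{Z}_p$. Continuity of the polynomial $f$ yields $f(\beta) = \lim_n f(\alpha_n) = 0$, and the congruence $\beta \equiv \alpha \bmod p$ is inherited termwise. For uniqueness, suppose $\beta' \in \mathbb{Z}_p$ also satisfies $f(\beta') = 0$ and $\beta' \equiv \alpha \bmod p$. Writing $\delta := \beta' - \beta$ and applying the same Taylor expansion around $\beta$, one obtains
\[
0 \;=\; f(\beta') \;=\; f(\beta) + \delta\, f'(\beta) + \delta^2\, h \;=\; \delta\bigl(f'(\beta) + \delta h\bigr)
\]
for some $h \in \mathbb{Z}_p$. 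Since $\beta \equiv \alpha \bmod p$, we have $f'(\beta) \equiv f'(\alpha) \not\equiv 0 \bmod p$, so $f'(\beta) + \delta h$ is a unit in $\mathbb{Z}_p$, forcing $\delta = 0$.

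The only step that needs care is the bookkeeping in the induction -- specifically, verifying that each Newton iterate stays in $\mathbb{Z}_p$ and that $f'(\alpha_n)$ remains a unit -- but this is automatic from the fact that $\alpha_{n+1} - \alpha_n$ lies in $p\mathbb{Z}_p$ from the first step onward, and there are no genuine obstacles. The proof is essentially the classical Newton-Raphson convergence argument transported to the ultrametric setting, where the quadratic convergence rate $v_p(f(\alpha_{n+1})) \geq 2\, v_p(f(\alpha_n))$ is more than enough to guarantee Cauchy convergence.
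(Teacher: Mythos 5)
Your proof is correct: the Newton iteration with the ultrametric Taylor expansion $f(x+h)=f(x)+hf'(x)+h^2g(x,h)$, $g\in\mathbb{Z}_p[x,h]$, gives exactly the existence and uniqueness claimed, and the bookkeeping you describe (each iterate stays in $\mathbb{Z}_p$, each $f'(\alpha_n)$ stays a unit because $\alpha_n\equiv\alpha\bmod p$) goes through without issue. The paper itself offers no proof of this lemma --- it simply cites \cite[Theorem 3.4.1]{Gou} --- and your argument is precisely the standard one found there, so there is nothing to reconcile.
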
 \noindent Via a study of binomial coefficients, one easily obtains the following well-known fact \cite[Theorem 3.1]{Conrad}:
\begin{thm} \label{classicalunity} \begin{enumerate}[(a)] \item If $p=2$, then $\mathbb{M} \cap \mathbb{Q}_p = \mu_2 = \{-1,1\}$. \item If $p$ is odd, then $\mathbb{M} \cap \mathbb{Q}_p = \mu_{p-1}$. \end{enumerate}\end{thm}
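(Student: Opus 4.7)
The strategy is to reduce any root of unity to $\mathbb{Z}_p^{\times}$, split it into a Teichm\"{u}ller part and a $1$-unit part, and handle each with a different tool. Any $\zeta \in \mathbb{Q}_p$ with $\zeta^n = 1$ satisfies $|\zeta|_p^n = 1$, hence $|\zeta|_p = 1$ and $\zeta \in \mathbb{Z}_p^{\times}$. The reduction $\mathbb{Z}_p^{\times} \twoheadrightarrow \mathbb{F}_p^{\times}$ has kernel $U_1 := 1 + p\mathbb{Z}_p$. I would first apply the stated Hensel's lemma to $f(x) = x^{p-1} - 1$: for each $\alpha \in \mathbb{F}_p^{\times}$, Fermat gives $f(\alpha) \equiv 0 \pmod p$ while $f'(\alpha) = (p-1)\alpha^{p-2} \not\equiv 0 \pmod p$, producing a unique lift $\widetilde{\alpha} \in \mathbb{Z}_p^{\times}$ with $\widetilde{\alpha}^{p-1} = 1$. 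Thus $\mu_{p-1} \subseteq \mathbb{Z}_p^{\times}$, and the reduction splits: every $\zeta \in \mathbb{Z}_p^{\times}$ factors uniquely as $\zeta = \widetilde{\bar{\zeta}} \cdot \xi$ with $\widetilde{\bar{\zeta}} \in \mu_{p-1}$ and $\xi := \zeta/\widetilde{\bar{\zeta}} \in U_1$.

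It therefore suffices to show that the only roots of unity in $U_1$ are $1$ for $p$ odd and $\{\pm 1\}$ for $p = 2$. For this I would use the promised binomial expansion. Any $\xi \in U_1 \setminus \{1\}$ can be written as $\xi = 1 + p^k u$ with $u \in \mathbb{Z}_p^{\times}$ and some $k \geq 1$. Expand
\[ \xi^p - 1 \; = \; \sum_{i=1}^{p} \binom{p}{i}\, p^{ki}\, u^i. \]
For $1 \leq i \leq p-1$ one has $v_p\!\bigl(\binom{p}{i}\bigr) = 1$, so the $i$th term has valuation exactly $1 + ki$, while the $i = p$ term has valuation $pk$. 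For $p$ odd and $k \geq 1$, the minimum $k+1$ is attained uniquely at $i = 1$, so $v_p(\xi^p - 1) = k + 1$; iterating gives $\xi^{p^a} = 1 + p^{k+a} u^{(a)}$ with $u^{(a)} \in \mathbb{Z}_p^{\times}$ for every $a \geq 0$. For any further $n$ with $p \nmid n$, the same expansion applied to $\xi^{p^a}$ gives $v_p\bigl((\xi^{p^a})^n - 1\bigr) = k + a$ as well, so $\xi^N \neq 1$ for every positive integer $N$. Hence the only root of unity in $U_1$ is $1$ when $p$ is odd.

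For $p = 2$, the same calculation works whenever $k \geq 2$, ruling out roots of unity of that form. In the residual case $\xi = 1 + 2u$ with $u \in \mathbb{Z}_2^{\times}$, one has $\xi^2 - 1 = 4u(1+u)$, and since $u$ and $u+1$ are consecutive integers one of them is even, giving $v_2(\xi^2 - 1) \geq 3$; if $\xi$ were a root of unity then so would be $\xi^2 \in 1 + 8\mathbb{Z}_2$, forcing $\xi^2 = 1$ by the $k \geq 2$ case and hence $\xi \in \{\pm 1\}$. Combining with the first paragraph yields (a) (since $\mathbb{F}_2^{\times}$ is trivial, $\widetilde{\bar{\zeta}} = 1$ and $\zeta = \xi \in \{\pm 1\} = \mu_2$) and (b) (for $p$ odd, $\xi = 1$ and so $\zeta = \widetilde{\bar{\zeta}} \in \mu_{p-1}$). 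The main obstacle is the $p=2$ bookkeeping, where $\binom{2}{1}$ carries an extra factor of $p$ that destroys the unique minimum at $i=1$, and the residual $k=1$ case has to be dispatched by an ad hoc squaring to land back in the generic regime.
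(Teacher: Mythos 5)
Your proof is correct and follows essentially the same route the paper indicates: the paper does not write out an argument but cites Conrad's note and the preceding Hensel's lemma together with ``a study of binomial coefficients,'' which is exactly your two-step decomposition --- Hensel lifting $\mathbb{F}_p^{\times}$ to $\mu_{p-1}\subset\mathbb{Z}_p^{\times}$, then the valuation computation on $\binom{p}{i}p^{ki}$ to kill nontrivial roots of unity in $1+p\mathbb{Z}_p$, with the separate $k=1$ squaring trick at $p=2$. No gaps; the ``consecutive integers'' phrasing for $u\in\mathbb{Z}_2^{\times}$ should just read that $u$ is odd so $1+u\in 2\mathbb{Z}_2$, but the conclusion is unaffected.
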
 

\noindent Of course, the more general version of Hensel's lemma is for complete local rings \cite[Chapter 2.4]{Neu}: 
\begin{lem}[Generalised Hensel's lemma] \label{Henselgeneral} let $R$ be a complete local ring with $\mathfrak{m}$ its maximal ideal, Suppose that $f(x) \in R[x]$, and that $\alpha \in R$ is such that $$f(\alpha) \equiv 0 \mod \mathfrak{m} \qquad \text{ and } \qquad f'(\alpha) \not\equiv 0 \mod \mathfrak{m}.$$ Then there exists a unique $\beta \in R$ such that $$f(\beta) = 0 \qquad \text{ and } \qquad \beta \equiv \alpha \mod \mathfrak{m}.$$
	\end{lem}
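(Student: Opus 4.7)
The plan is to mimic the standard $p$-adic proof via Newton iteration, adapted to an arbitrary complete local ring $(R,\mathfrak{m})$. Starting from the approximate root $\alpha_0 := \alpha$, I would define recursively
\[
\alpha_{n+1} := \alpha_n - \frac{f(\alpha_n)}{f'(\alpha_n)},
\]
provided the denominator makes sense, and prove by induction on $n$ that (i) $\alpha_n \in R$, (ii) $\alpha_n \equiv \alpha \pmod{\mathfrak{m}}$, (iii) $f'(\alpha_n) \not\equiv 0 \pmod{\mathfrak{m}}$, and (iv) $f(\alpha_n) \in \mathfrak{m}^{2^n}$. The first three statements hinge on the observation that $\alpha_n \equiv \alpha \pmod{\mathfrak{m}}$ forces $f'(\alpha_n) \equiv f'(\alpha) \not\equiv 0 \pmod{\mathfrak{m}}$, and since $\mathfrak{m}$ is the maximal ideal of a local ring, any element outside $\mathfrak{m}$ is a unit; hence $f'(\alpha_n)^{-1}$ exists in $R$ and the recursion stays in $R$. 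That $\alpha_{n+1} \equiv \alpha_n \pmod{\mathfrak{m}^{2^n}}$ then follows from (iv), so the sequence will be Cauchy in the $\mathfrak{m}$-adic topology.

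The core estimate is (iv), the quadratic convergence. For this I would expand $f$ around $\alpha_n$ using the Taylor-like identity in $R[x,y]$
\[
f(x+y) = f(x) + f'(x) y + y^2 g(x,y)
\]
for some $g \in R[x,y]$, which holds formally for any polynomial and hence in $R$. Setting $x = \alpha_n$ and $y = -f(\alpha_n)/f'(\alpha_n)$ kills the first two terms on the right, leaving
\[
f(\alpha_{n+1}) = \left(\frac{f(\alpha_n)}{f'(\alpha_n)}\right)^2 g(\alpha_n, y) \in f(\alpha_n)^2 \cdot R,
\]
since $f'(\alpha_n)^{-1} \in R$. Combining with the inductive hypothesis $f(\alpha_n) \in \mathfrak{m}^{2^n}$ gives $f(\alpha_{n+1}) \in \mathfrak{m}^{2^{n+1}}$.

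With (i)--(iv) in hand, completeness of $R$ in the $\mathfrak{m}$-adic topology supplies a limit $\beta = \lim \alpha_n \in R$. Continuity of $f$ and of reduction modulo $\mathfrak{m}$ then give $f(\beta) = 0$ and $\beta \equiv \alpha \pmod{\mathfrak{m}}$. For uniqueness, suppose $\beta'$ is another such root. Writing $f(\beta') - f(\beta) = (\beta' - \beta)\bigl(f'(\beta) + (\beta' - \beta)h(\beta,\beta')\bigr)$ for some $h \in R[x,y]$, the bracketed factor reduces mod $\mathfrak{m}$ to $f'(\alpha) \neq 0$, hence is a unit in $R$; since the left-hand side vanishes, $\beta' = \beta$.

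The main obstacle here is conceptual rather than computational: verifying that nothing in the classical $\mathbb{Z}_p$ argument requires more than the existence of inverses of units in $R$ and the $\mathfrak{m}$-adic completeness. The only genuinely ring-theoretic input is that any element of $R \setminus \mathfrak{m}$ is a unit, which is exactly the hypothesis that $R$ is local with maximal ideal $\mathfrak{m}$; this keeps the Newton iterates inside $R$ and makes the uniqueness argument go through. I would not expect to need any additional hypothesis (such as Noetherianity or a DVR structure).
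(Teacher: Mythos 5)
Your proof is correct. Note that the paper itself does not prove this lemma at all: it is stated as a known result with a citation to Neukirch, so there is no in-paper argument to compare against. Your Newton-iteration proof is the standard one and all the steps go through: the Taylor identity $f(x+y)=f(x)+f'(x)y+y^2g(x,y)$ holds formally over any commutative ring, locality of $R$ is exactly what keeps $f'(\alpha_n)$ invertible so the iterates stay in $R$, and the factorization $f(\beta')-f(\beta)=(\beta'-\beta)\bigl(f'(\beta)+(\beta'-\beta)h\bigr)$ with the second factor a unit gives uniqueness. The only point worth making explicit is that you implicitly use separatedness of the $\mathfrak{m}$-adic topology, i.e.\ $\bigcap_n \mathfrak{m}^n = 0$: this is needed both to conclude $f(\beta)=0$ from $f(\beta)\in\mathfrak{m}^{2^n}$ for all $n$, and for the limit of the Cauchy sequence to be unique. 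This is part of the usual meaning of ``complete local ring'' (and is automatic in the Noetherian case by Krull's intersection theorem), so it is a matter of stating the convention rather than a gap.
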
 
	We now apply this to function fields. In order to do so, let $P \in \mathbb{F}_q[T]$ be monic and irreducible. We have $u_q^P = u \Psi_{q,P}(u)$, where $\Psi_{q,P}(u) \in \in \mathbb{F}_q[T][u]$ may be defined for any $\mathbb{F}_q[T]$-generator $\lambda$ of $\Lambda_{q,P}$ as \cite[Definition 12.3.8]{Vil} \begin{align*} \Psi_{q,P}(u) :&= \prod_{A \in (\mathbb{F}_q[T]/P)^*} (u - \lambda^A) \\&= u^{\Phi(P)} + \beta_{\Phi(P) - 1} u^{\Phi(P) - 1} + \cdots + \beta_1 u + \beta_0. \end{align*} The polynomial $\Psi_{q,P}(u)$ is irreducible over $\mathbb{F}_q[T][u]$. We also have $P \mid \beta_i$ for each $i = 1,\ldots,\Phi(P)-1$ and $\beta_0 = \pm P$. We now determine $\mathbb{T}_q \cap {(\mathbb{F}_q(T))}_P$, i.e., those roots of $q$-unity which lie in the $P$-adic completion of the rational field.

\begin{prop} \label{tq} $\mathbb{T}_q \cap {(\mathbb{F}_q(T))}_P = \Lambda_{q,P-1}$. \end{prop}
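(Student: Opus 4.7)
The plan is to prove both inclusions, in direct analogy with the classical statement $\mathbb{M}\cap\mathbb{Q}_p=\mu_{p-1}$ quoted above: the forward direction comes from Hensel-lifting residue-field roots, and the reverse direction exploits the Carlitz--Galois correspondence together with the ramification behaviour at $P$.

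For $\Lambda_{q,P-1}\subseteq(\mathbb{F}_q(T))_P$: because the Carlitz action is $\mathbb{F}_q$-additive, $u_q^{P-1}$ is an $\mathbb{F}_q$-linearised polynomial of degree $q^d$ in $u$, where $d=\deg P$. Using $u_q^{P-1}=u_q^P-u=u\Psi_{q,P}(u)-u$ together with $\beta_0=\pm P$ and $P\mid\beta_i$ for $i\geq 1$, it follows that modulo $P$ this reduces to $u^{q^d}-u$, which splits into $q^d$ distinct linear factors over the residue field $\mathbb{F}_q[T]/P\cong\mathbb{F}_{q^d}$. Moreover, the coefficient of $u$ in $u_q^{P-1}$ equals $P-1$ (from the inductive identity that the coefficient of $u$ in $(\phi_q+\mu_T)^n u$ is $T^n$), so the formal derivative of $u_q^{P-1}$ is the nonzero constant $P-1\not\equiv 0\pmod P$. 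The generalised Hensel lemma then lifts each residue root uniquely to a root in $(\mathbb{F}_q(T))_P$, and since $u_q^{P-1}$ has exactly $q^d$ roots in total, these exhaust $\Lambda_{q,P-1}$.

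For the reverse inclusion, let $u\in\Lambda_{q,N}\cap(\mathbb{F}_q(T))_P$ with $N$ the monic annihilator of $u$, and decompose $N=P^aM$ with $\gcd(M,P)=1$. By the Chinese Remainder Theorem, choose $A\in\mathbb{F}_q[T]$ with $A\equiv 1\pmod{P^a}$ and $A\equiv 0\pmod{M}$; then $u_1:=A\cdot_q u$ is the $P$-primary component of $u$, and because $A\cdot_q u$ is an $\mathbb{F}_q[T]$-polynomial expression in $u$, $u_1$ still lies in $(\mathbb{F}_q(T))_P$. If $u_1\neq 0$ has exact order $P^{a'}$ with $a'\geq 1$, then $u_1$ generates $\Lambda_{q,P^{a'}}$ as an $\mathbb{F}_q[T]$-module, so $\mathbb{F}_q(T)(u_1)=K_{q,P^{a'}}$, an extension totally ramified at $P$ of degree $\Phi(P^{a'})=q^{d(a'-1)}(q^d-1)$ (from Table 1). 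Provided $q^d>2$, this degree exceeds $1$ and contradicts $u_1\in(\mathbb{F}_q(T))_P$, forcing $u_1=0$. Hence $u\in\Lambda_{q,M}$ with $\gcd(M,P)=1$, so $K_{q,M}/\mathbb{F}_q(T)$ is unramified at $P$ and the decomposition group at any prime $\mathfrak{P}$ above $P$ is generated by the Frobenius $\sigma_P$, which by the Hayes--Galois correspondence acts as $\sigma_P(v)=P\cdot_q v$ on $\Lambda_{q,M}$. Fixing the embedding $K_{q,M}\hookrightarrow(\mathbb{F}_q(T))_P$ determined by $\mathfrak{P}$, the hypothesis $u\in(\mathbb{F}_q(T))_P$ forces $\sigma_P(u)=u$, i.e.\ $(P-1)\cdot_q u=0$, whence $u\in\Lambda_{q,P-1}$.

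The main obstacle is the $P$-primary vanishing step: one needs both that the Carlitz action preserves $(\mathbb{F}_q(T))_P$ (so that the $P$-primary component of $u$ can be extracted inside the completion) and the total ramification of $K_{q,P^{a'}}/\mathbb{F}_q(T)$ at $P$. Once these are in place, the coprime-to-$P$ case reduces to a routine Frobenius-fixed-points computation, mirroring the classical argument for $\mathbb{M}\cap\mathbb{Q}_p=\mu_{p-1}$.
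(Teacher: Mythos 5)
Your proof is correct, and for the harder containment $\mathbb{T}_q \cap (\mathbb{F}_q(T))_P \subseteq \Lambda_{q,P-1}$ it takes a genuinely different route from the paper. The forward inclusion is essentially the paper's argument: both of you reduce $x_q^{P-1}$ modulo $P$ to $x^{q^d}-x$ via the divisibility of the coefficients of $\Psi_{q,P}$, note that the derivative is the unit $P-1$, and Hensel-lift the $q^d$ distinct residue roots. For the reverse inclusion the paper stays entirely local: it kills the $P$-primary part by a valuation bootstrap on $\alpha_q^P = \alpha\,\Psi_{q,P}(\alpha)$ (first $P \mid \alpha$, then $P^2 \mid \alpha$, then the uniqueness clause of Hensel's lemma forces $\alpha = 0$), and it pins down the prime-to-$P$ torsion by uniqueness of the Hensel lift of $x_q^{AB}$ in each residue class. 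You instead argue globally: an element of exact order $P^{a'}$ generates $K_{q,P^{a'}}$, which is totally ramified at $P$ of local degree $\Phi(P^{a'})>1$ and so cannot embed in $(\mathbb{F}_q(T))_P$; and for the prime-to-$P$ component you use that the decomposition group at the unramified prime $P$ is generated by the Frobenius $\sigma_P(\lambda)=\lambda_q^P$, so lying in $(\mathbb{F}_q(T))_P$ is exactly the condition $(P-1)\cdot_q u = 0$. Your route is the direct transcription of the classical proof of Theorem \ref{classicalunity} via ramification in $\mathbb{Q}(\mu_{p^k})$ and Frobenius in $\mathbb{Q}(\mu_n)$, and it is more conceptual (the same Frobenius computation would in fact also give the forward inclusion without Hensel's lemma); the paper's route is more elementary and self-contained, using only Hensel's lemma and the explicit coefficients $\beta_i$. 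One point in your favour: your caveat ``provided $q^d>2$'' is not cosmetic. When $q=2$ and $\deg P = 1$, say $P=T$, one has $\Lambda_{2,T}=\{0,T\}\subset\mathbb{F}_2[T]\subset(\mathbb{F}_2(T))_T$ while $\Lambda_{2,T-1}=\{0,T+1\}$, so the proposition as stated fails in that case; the paper's own bootstrap also silently requires $\Phi(P)\geq 2$ at the step $\alpha^{\Phi(P)+1}\equiv 0 \pmod{P^3}$, so the hypothesis $q^d>2$ is genuinely needed by both arguments.
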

\begin{proof} Suppose that $u \in {(\mathbb{F}_q(T))}_P$ satisfies $u_q^A = 0$ for $A \in \mathbb{F}_q[T]$ relatively prime with $P$. By definition of $u_q^A$, it follows that $u$ is integral over $\mathbb{F}_q[T]$ and is thus contained in ${(\mathbb{F}_q[T])}_P$. Let $\deg(A) = d$. We may write $$u_q^A = \sum_{i=0}^d  \bfrac{A}{d}_q u^{q^i},$$ where $\bfrac{A}{0}_q = A$ \cite[Theorem 12.2.5]{Vil}. In particular, with $f(x) = x_q^A \in \mathbb{F}_q[T][x]$, we have $$f(u) = u_q^A = 0 \equiv 0 \mod P \qquad \text{and}\qquad f'(u) = A \not\equiv 0 \mod P.$$ It follows that $u$ is the unique root in ${(\mathbb{F}_q[T])}_P$ of $f(x)$ in the equivalence class of $u$ modulo $P$. If $v \in {(\mathbb{F}_q(T))}_P$ satisfies $v_q^B = 0$ for $B \in \mathbb{F}_q[T]$ relatively prime with $P$, then we may note that $$u_q^{AB} =\left(u_q^{A}\right)_q^B = 0_q^B = 0\qquad \text{and} \qquad v_q^{AB} =\left(v_q^{B}\right)_q^A = 0_q^A = 0.$$ Again by Hensel's lemma and the fact that $f(x) = x_q^{AB}$ satisfies $f'(x) = AB \not\equiv 0 \mod P$, it follows that the equivalence $u = v \mod P$ implies $u = v$. Clearly the polynomial $P - 1 \in \mathbb{F}_q[T]$ is relatively prime with $P$. For all $A \in \mathbb{F}_q[T]$, we have $$A_q^{P-1} = A_q^P - A \equiv A^{q^d} - A \equiv 0 \mod P.$$ The polynomial $f(x) = x_q^{P-1}$ is of degree $q^{\deg(P-1)} = q^{\deg(P)} = q^d$, and $$f(x) \mod P \equiv P - 1 \equiv -1 \not\equiv 0 \mod P,$$ so that $\overline{f}(x) :=f(x) \mod P$ is separable over $\mathbb{F}_q[T]/P$. It follows that $\overline{f}(x)$ factors as $$\overline{f}(x) \equiv \prod_{A \in \mathbb{F}_q[T]/P} (x - A) \mod P.$$ It follows that $f(x) = x_q^{P-1}$ contains a root in every equivalence class modulo $P$ of ${(\mathbb{F}_q[T])}_P$, and by Hensel's lemma, the collection of torsion points in ${(\mathbb{F}_q(T))}_P$ of order relatively prime to $P$ is precisely $\Lambda_{q,P-1}$.

Suppose now that $u \in {(\mathbb{F}_q(T))}_P$ satisfies $u_q^A = 0$ for $A \in \mathbb{F}_q[T]$ divisible by $P$. Let $A = P^k M$ where $(P,M)=1$. We have $$(u_q^M)_q^{P^k} = u_q^{P^k M} = u_q^A= 0.$$ By definition of the exponential action, $u_q^{P^j M} \in {(\mathbb{F}_q[T])}_P$ for each $j=0,\ldots,k-1$. Letting $\alpha := u_q^{P^{k-1} M}$, we then have $\alpha_q^{P} = 0$. The polynomial $f(x) = x_q^P$, for which $f'(x) = \bfrac{P}{0}_q=P$, then satisfies $$f(\alpha) = 0\qquad \text{and} \qquad v_P(f'(\alpha)) = v_P(P) = 1.$$ By definition, $$f(\alpha) =  \alpha_q^P = \alpha^{\Phi(P)+1} + \beta_{\Phi(P) - 1} \alpha^{\Phi(P)} + \cdots + \beta_1 \alpha^2 + \beta_0 \alpha.$$ As $P \mid \beta_0,\beta_1,\ldots,\beta_{\Phi(P) - 1}$, we easily obtain $P \mid \alpha$. Letting $\alpha = P N$, we obtain $$0 = \alpha^{\Phi(P)+1} + \beta_{\Phi(P) - 1} \alpha^{\Phi(P)} + \cdots + \beta_1 \alpha^2 + \beta_0 \alpha \equiv \beta_0 PN \mod P^3.$$ As $\beta_0 = \pm P$, it follows that $P\mid N$, and hence that $P^2 \mid \alpha$. It follows by Hensel's lemma once more that $\alpha = 0$. As $0 \in \Lambda_{q,P-1}$, this concludes the proof. \end{proof} 

It is also possible to formulate and prove a version of this result at infinity. One expects that the intersection at infinity will be quite small and will not contain the constant units $\mathbb{F}_q^*$, as the $\mathbb{F}_q[T]$ action is additive. This is completely true, and in fact, it contains \emph{no} units:

\begin{prop} \label{infinitytq} $\mathbb{T}_q \cap (\mathbb{F}_q(T))_\infty = \{0\}$. \end{prop}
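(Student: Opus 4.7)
My plan is to prove the contrapositive via a valuation computation at the infinite place: if $u \in (\mathbb{F}_q(T))_\infty$ is nonzero, then $u_q^M \neq 0$ for every nonzero $M \in \mathbb{F}_q[T]$. The key arithmetic fact is that $v_\infty$ is the discrete valuation with uniformiser $1/T$, so $v_\infty(u^q) = q\cdot v_\infty(u)$ and $v_\infty(Tu) = v_\infty(u) - 1$. Since the equation $qv = v - 1$ has no integer solution (because $-1/(q-1) \notin \mathbb{Z}$ for $q \geq 2$), the two summands of $(\phi_q + \mu_T)(u) = u^q + Tu$ always have distinct valuations, and consequently
\[
v_\infty\bigl((\phi_q + \mu_T)(u)\bigr) = \min\bigl(q\,v_\infty(u),\; v_\infty(u)-1\bigr).
\]

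Set $w_k := v_\infty\bigl((\phi_q+\mu_T)^k(u)\bigr)$ with $w_0 = v_\infty(u) =: v$. The recursion $w_{k+1} = \min(qw_k, w_k - 1)$ splits into two regimes: if $w_k \leq -1$ then $w_{k+1} = qw_k$ (and $w_{k+1} \leq -q \leq -1$, so this regime is absorbing), while if $w_k \geq 0$ then $w_{k+1} = w_k - 1$. Hence for any initial $v$, the sequence $(w_k)$ is strictly decreasing after the first step and tends to $-\infty$; in particular it is always finite, so $(\phi_q+\mu_T)^k(u) \neq 0$ for every $k$. A short case analysis (whether $v \leq -1$, or $0 \leq v < n$, or $v \geq n$, where $n = \deg M$) shows that in each case the sequence $w_0, w_1, \ldots, w_n$ is strictly decreasing, so $w_n$ is the unique minimum.

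Now write $M = \sum_{i=0}^n a_i T^i$ with $a_n \in \mathbb{F}_q^*$, and recall that $u_q^M = \sum_{i=0}^n a_i (\phi_q+\mu_T)^i(u)$. Since each $a_i \in \mathbb{F}_q$ satisfies $v_\infty(a_i) = 0$ when nonzero, the term $a_n (\phi_q+\mu_T)^n(u)$ has valuation $w_n$, which is strictly smaller than $v_\infty\bigl(a_i(\phi_q+\mu_T)^i(u)\bigr) = w_i$ for every $i < n$ with $a_i \neq 0$. By the ultrametric property it follows that $v_\infty(u_q^M) = w_n < \infty$, i.e., $u_q^M \neq 0$, as required. (The degenerate case $n = 0$, where $M \in \mathbb{F}_q^*$ and $u_q^M = a_0 u$, is trivial.)

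The only subtlety I anticipate is making the case analysis for "$w_n$ is strictly smaller than all earlier $w_i$" clean without becoming tedious; the cleanest presentation is probably just to state the three regimes above, exhibit the closed form $w_k = q^k v$ when $v \leq -1$ and $w_k = -q^{\max(k-v-1,0)}\cdot [k > v] + (v-k)\cdot [k \leq v]$ otherwise, and observe monotonicity directly. Everything else is routine valuation bookkeeping once the no-cancellation observation $v \neq -1/(q-1)$ is in hand.
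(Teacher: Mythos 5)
Your proof is correct, and it takes a genuinely different route from the paper's. The paper expands $u_q^M=\sum_{i=0}^{d}\bfrac{M}{i}_q u^{q^i}$, reduces to the case where $u$ has a nonzero constant term $a_0\in\mathbb{F}_q^*$ at infinity, and then derives a contradiction from the degree formula $\deg\bfrac{M}{i}_q=(d-i)q^i$, whose maximum it locates at $i=d-1$. You instead decompose $u_q^M$ as $\sum_i a_i(\phi_q+\mu_T)^i(u)$ and track the valuation of the iterates directly: the single observation that $qv$ and $v-1$ are never equal for $v\in\mathbb{Z}$ forces $w_{k+1}=\min(qw_k,w_k-1)$, and in fact in both of your regimes one has $w_{k+1}<w_k$ outright, so the sequence is strictly decreasing from $k=0$ on and no case analysis on $v$ versus $n$ is needed at all --- the top term $a_n(\phi_q+\mu_T)^n(u)$ has strictly minimal valuation and the ultrametric equality finishes it. What your approach buys is uniformity and a bit more information: it treats every initial valuation of $u$ at once (no reduction to the constant term), it computes $v_\infty(u_q^M)$ exactly rather than just showing it is finite, and it works verbatim for $q=2$, where the paper's step ``$\max_i\deg\bfrac{M}{i}_q$ is attained at $i=d-1$'' requires extra care because $(d-i)q^i$ takes the same value at $i=d-1$ and $i=d-2$ when $q=2$. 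The paper's approach, in exchange, connects the statement to the explicit Carlitz coefficients $\bfrac{M}{i}_q$ and the structure of $\Lambda_{q,M}$ as an $\mathbb{F}_q$-vector space, which is reused elsewhere. Your closed-form expression for $w_k$ is an unnecessary flourish --- the monotonicity alone suffices --- but nothing in the argument is missing.
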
 

\begin{proof} Let $$u =  \sum_{m=-\infty}^n a_m T^m \in (\mathbb{F}_q(T))_\infty.\qquad (a_m \in \mathbb{F}_q)$$ Suppose that $u \neq 0$, and that $u$ satisfies $u_q^M = 0$ for some $M \in \mathbb{F}_q[T]\backslash\{0\}$. With $\deg(M) = d$, we have $$u_q^M = \sum_{i=0}^d \bfrac{M}{i}_q u^{q^i}.$$ Thus $n = 0$, so that $a_0 \neq 0$ and $a_0^M = 0$, so that $a_0 \in \Lambda_{q,M}$. As $\Lambda_{q,M}$ is an $\mathbb{F}_q$-vector space, it follows that $\mathbb{F}_q \subset \Lambda_{q,M}$. For each $i=0,\ldots,d$, $\bfrac{M}{i}_q$ is a polynomial in $\mathbb{F}_q[T]$ of degree $(d-i)q^i$ \cite[Proposition 1.1]{Hay1} , which assumes a maximum value among $i=0,\ldots,d$ at $i=d-1$, where it is equal to $q^{d-1}$.  As $\deg(a_0) = 0$, it follows that $$\deg(a_0^M) = \deg\left(\sum_{i=0}^d \bfrac{M}{i}_q a_0^{q^i}\right)  = \max_i \deg\left(\bfrac{M}{i}_q\right) = \deg\left(\bfrac{M}{d-1}_q\right) = q^{d-1}.$$ In particular, $-\infty = \deg(0) = \deg(a_0^M) = q^{d-1}$, a contradiction. \end{proof} 

\begin{rem} Proposition \ref{infinitytq} is similar to the statement that $\mathbb{M} \cap \mathbb{R} = \{-1,1\}$, and this reflects the fact that the Carlitz torsion modules $\Lambda_{q,M}$ are additive modules with $0$ as the unique ``unit", whereas the groups $\mu_n$ are multiplicative.\end{rem}

\subsection{The $q$-unit circle $\mathbb{S}_q$: Center, curvature, dimension}

We must now proceed to define the $q$-unit circle $\mathbb{S}_q$, and along with that, the notions of center and curvature which should be attached to such an object.

\begin{defn}[The $q$-unit circle] We denote by $\mathbb{S}_q$ the completion of $\mathbb{T}_q$ at a fixed choice of branch $\mathfrak{P}$ of infinity. $\mathbb{S}_q$ is called the \emph{$q$-unit circle}. \end{defn} 

The collection $\mathbb{T}_q$ forms an additive group: If $u,v \in \mathbb{T}_q$, then there exist $M,N \in \mathbb{F}_q[T]$ such that $u_q^M = 0$ and $v_q^N = 0$. Thus $$(u+v)_q^{MN} = (u_q^M)_q^N + (v_q^N)_q^M = 0_q^N + 0_q^M = 0,$$ so that $u+v \in \Lambda_{q,MN} \subset \mathbb{T}_q$. As $\mathbb{F}_q \cdot \Lambda_{q,M} = \Lambda_{q,M}$ for each $M \in \mathbb{F}_q[T]$, it follows that $\mathbb{F}_q \cdot \mathbb{T}_q = \mathbb{T}_q$. Note that multiplication by $\mathbb{F}_q$ agrees with the Carlitz action by elements of $\mathbb{F}_q$ \cite[Remark 12.2.4]{Vil}. Finally, $M(\mathbb{T}_q) \subset \mathbb{T}_q$ for all $M \in \mathbb{F}_q[T]$, as $u \in \mathbb{T}_q$ satisfies $u_q^N = 0$ for some $N \in \mathbb{F}_q[T]$, so that $u_q^M$ satisfies $$(u_q^M)_q^N = u_q^{MN} = u_q^{NM} = (u_q^N)_q^M = 0_q^M = 0.$$ By passing to the completion, we thus obtain:
\begin{prop} \label{sqvectorspace} $\mathbb{S}_q$ admits the structure of an additive group, $\mathbb{F}_q[T]$-module, and $\mathbb{F}_q$-vector space. \end{prop}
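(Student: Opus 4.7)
The plan is to observe that the three structures have essentially already been verified on the dense subset $\mathbb{T}_q$ in the discussion immediately preceding the proposition, and then extend them to the completion $\mathbb{S}_q$ by continuity. So the task reduces to verifying that each of the operations—addition, scalar multiplication by $c \in \mathbb{F}_q$, and the Carlitz action $u \mapsto u_q^M$ for $M \in \mathbb{F}_q[T]$—is continuous in the $\mathfrak{P}$-adic topology, and then invoking the universal property of the completion to get unique continuous extensions that preserve the algebraic identities.

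Step one is addition: on any completion of a topological group, addition is jointly continuous by construction, so the abelian group structure on $\mathbb{T}_q$ extends immediately to $\mathbb{S}_q$. Step two is scalar multiplication by a fixed $c \in \mathbb{F}_q$: this is just the Carlitz action by the constant polynomial $c$, which is $u \mapsto cu$, namely ordinary multiplication by a fixed nonzero constant in the valued field $\overline{\mathbb{F}_q(T)}_\mathfrak{P}$, and therefore continuous. Because $\mathbb{F}_q$ is finite, the distributive and associative laws pass to $\mathbb{S}_q$ by continuity, giving the $\mathbb{F}_q$-vector space structure.

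Step three is the Carlitz action by an arbitrary $M \in \mathbb{F}_q[T]$, and this is the only substantive point. By \eqref{expaction}, $M \cdot_q u$ is an $\mathbb{F}_q$-linear combination of iterates of $\phi_q + \mu_T$. The map $\mu_T$ is continuous as multiplication by a fixed element of the valued field. For $\phi_q$, the key observation is that in characteristic $p$ with $q = p^r$, the Frobenius is additive, so
\[
|\phi_q(u) - \phi_q(v)|_{\mathfrak{P}} \;=\; |(u-v)^q|_{\mathfrak{P}} \;=\; |u-v|_{\mathfrak{P}}^{\,q},
\]
which shows $\phi_q$ is continuous (in fact uniformly continuous) on $\overline{\mathbb{F}_q(T)}_\mathfrak{P}$. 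Hence $\phi_q + \mu_T$ and all of its iterates are continuous, and so is every $\mathbb{F}_q$-linear combination of them. Thus $u \mapsto u_q^M$ is continuous and extends uniquely to $\mathbb{S}_q$.

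Finally, the module identities $u_q^{M+N} = u_q^M + u_q^N$ and $(u_q^M)_q^N = u_q^{MN}$, being continuous identities that hold on the dense subset $\mathbb{T}_q$, persist on $\mathbb{S}_q$, endowing it with the $\mathbb{F}_q[T]$-module structure. The only mildly delicate point in the whole argument is the continuity of $\phi_q$, which depends essentially on working in positive characteristic; everything else is a direct transfer from $\mathbb{T}_q$ to its completion.
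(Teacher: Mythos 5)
Your proposal is correct and takes essentially the same route as the paper: verify the three structures on the dense subset $\mathbb{T}_q$ and then pass to the completion. The only difference is one of emphasis --- the paper spells out the algebraic closure properties of $\mathbb{T}_q$ (e.g.\ $(u+v)_q^{MN}=0$, so $u+v\in\Lambda_{q,MN}$) and compresses the completion step into a single phrase, whereas you take the algebra on $\mathbb{T}_q$ as established and instead supply the continuity argument (for addition, $\mathbb{F}_q$-scaling, and the polynomial map $u\mapsto u_q^M$) that the paper leaves implicit.
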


We are now ready to define a notion of center for the $q$-unit circle $\mathbb{S}_q$:
\begin{defn}[Center] The \emph{center} of $\mathbb{S}_q$ is the formal limit $\widetilde{\infty} = \lim_{n\rightarrow \infty} T^n$. \end{defn}
The center of $\mathbb{S}_q$ may be viewed as invariant under addition by elements of $(\overline{\mathbb{F}_q(T)})_\infty$. In the same way that the curvature of the classical unit circle $S^1$ is equal to 1, we say via the additive $\mathbb{F}_q[T]$-action $M \cdot_q u = u_q^M$ that the curvature of $\mathbb{S}_q$ is equal to $0$. More generally, we may also define:
\begin{defn}[Curvature]\label{def:curvature} For each $u \in (\mathbb{F}_q(T))_\infty$, let $\mathbb{S}_q(u):= u + \mathbb{S}_q$. The \emph{curvature} $k(u)$ of $\mathbb{S}_q(u)$ is defined by $k(u):=u$. \end{defn} 

We note that this is consistent with the structure of $\mathbb{S}_q$ as an $\mathbb{F}_q$-vector space, by Proposition \ref{infinitytq}. Over the complex plane $\mathbb{C}$, the curvature of a circle is a real number as well: The curvature $k(r)$ of a circle $C_r$ of radius $r$ is equal to $k(r) = 1/r$, and thus
\begin{equation} \label{ccurvature} \lim_{r \rightarrow 0^+} k(r) = \infty.\end{equation} 
If $C_r$ is centered at the origin, \eqref{ccurvature} is equivalent to the fact that the set of points on $C_r$ converge uniformly to the origin. For $\mathbb{S}_q(u)$, with $u = T^n$, we obtain in the valuation at $\mathfrak{p}_\infty$ that $$\lim_{n \rightarrow \infty} k(T^n) = \lim_{n \rightarrow \infty} T^n = \widetilde{\infty},$$ and hence that $\mathbb{S}_q(T^n)$ converges in a natural way to the center of $\mathbb{S}_q$ as $n \rightarrow \infty$. 

Just as multiplication by $n \in \mathbb{Z}$ generates a covering map of the classical unit circle $S^1$, we find a similar map for function fields on the $q$-unit circle:

\begin{prop} \label{covering} The map $M \cdot_q u = u_q^M$ induces a covering $M: \mathbb{S}_q \rightarrow \mathbb{S}_q$. 	
\end{prop}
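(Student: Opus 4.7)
The plan is to verify three properties of the map $u \mapsto u_q^M$ which together characterise a covering: it preserves $\mathbb{S}_q$, it is surjective with constant finite fiber cardinality $q^{\deg M}$, and it is a local homeomorphism. The first property is already essentially in hand: for $u \in \Lambda_{q,N} \subset \mathbb{T}_q$ we have $(u_q^M)_q^N = u_q^{MN} = 0$, so $u_q^M \in \Lambda_{q,MN} \subset \mathbb{T}_q$ (this was used for Proposition \ref{sqvectorspace}). Since $f(u) := u_q^M$ is a polynomial in $u$ with coefficients in $\mathbb{F}_q[T]$, it is continuous in the $\mathfrak{P}$-adic topology and hence extends to a continuous self-map of the completion $\mathbb{S}_q$.

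For fiber structure on the dense subset $\mathbb{T}_q$, I would exhibit the short exact sequence of $\mathbb{F}_q[T]$-modules
$$0 \longrightarrow \Lambda_{q,M} \longrightarrow \Lambda_{q,MN} \xrightarrow{\,M\,\cdot_q\,} \Lambda_{q,N} \longrightarrow 0$$
for each $N \in \mathbb{F}_q[T]$. Exactness on the left is the definition of $\Lambda_{q,M}$, while surjectivity follows from the cardinality count $|\Lambda_{q,MN}|/|\Lambda_{q,M}| = q^{\deg M + \deg N}/q^{\deg M} = |\Lambda_{q,N}|$. Every $v \in \mathbb{T}_q$ therefore has exactly $q^{\deg M}$ preimages, forming a single coset $u_0 + \Lambda_{q,M}$. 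Passing to completions, since $f$ is a polynomial of degree $q^{\deg M}$ the equation $f(u) = v$ has at most $q^{\deg M}$ roots for each $v \in \mathbb{S}_q$, and density of $\mathbb{T}_q$ in $\mathbb{S}_q$ combined with continuity of $f$ forces both surjectivity and the coset description to persist on $\mathbb{S}_q$.

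For the local homeomorphism property, I would use the expansion $f(u) = \sum_{i=0}^{\deg M} \bfrac{M}{i}_q u^{q^i}$ already invoked in the proof of Proposition \ref{tq}. In characteristic $p$ the only surviving term in the derivative is the $i=0$ contribution, so $f'(u) = \bfrac{M}{0}_q = M$, a nonzero element of $(\mathbb{F}_q(T))_\infty$ independent of $u$. Given $u_0 \in \mathbb{S}_q$ and $v$ sufficiently close to $f(u_0)$, the polynomial $g(x) := f(x) - v$ satisfies $g(u_0) \equiv 0$ modulo a high power of $\mathfrak{P}$ while $g'(u_0) = M$ has fixed valuation. The generalised Hensel's lemma (Lemma \ref{Henselgeneral}), applied in the local ring at $\mathfrak{P}$, produces a unique lift $u$ near $u_0$ with $f(u) = v$ depending continuously on $v$, and this is the required local inverse.

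The hard part will be verifying that one may choose a single neighborhood of $f(u_0)$ whose $q^{\deg M}$-element preimage splits into genuinely disjoint Hensel neighborhoods of the translates $u_0 + \lambda$, $\lambda \in \Lambda_{q,M}$. This amounts to comparing the uniform Lipschitz constant of the local inverse (which depends only on $v_{\mathfrak{P}}(M)$) against the minimum pairwise $\mathfrak{P}$-adic distance in the finite set $\Lambda_{q,M}$; because $\Lambda_{q,M}$ is a finite $\mathbb{F}_q$-vector space, such a minimum distance exists and is positive, so the radii can be chosen uniformly. This is precisely where the choice of branch $\mathfrak{P}$ of infinity enters, and once it is settled the covering map property — in perfect analogy to the $n$-fold cover $z \mapsto z^n$ of $S^1$ with deck group $\mu_n$ — follows, with deck transformations given by translation by $\Lambda_{q,M}$.
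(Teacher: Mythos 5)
Your proposal is correct in substance but takes a genuinely different (and more detailed) route than the paper. For surjectivity on the dense subset $\mathbb{T}_q$, the paper decomposes $\Lambda_{q,N}$ and $\Lambda_{q,MN}$ into their primary components via \eqref{primedecomp} and invokes Hayes' surjectivity of $P \cdot_q : \Lambda_{q,P^{n+1}} \rightarrow \Lambda_{q,P^n}$ component by component; your short exact sequence
$$0 \longrightarrow \Lambda_{q,M} \longrightarrow \Lambda_{q,MN} \xrightarrow{\,M\,\cdot_q\,} \Lambda_{q,N} \longrightarrow 0$$
with the cardinality count $q^{\deg(MN)}/q^{\deg M} = q^{\deg N}$ achieves the same thing more cheaply and identifies the fibers as cosets of $\Lambda_{q,M}$ in the bargain. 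The paper stops there and simply asserts ``by passing to completions, the result follows''; you go further and supply the deck-transformation and evenly-covered structure, which is the part of the word ``covering'' the paper leaves implicit. Two caveats on your extra material. First, ``density of $\mathbb{T}_q$ plus continuity forces surjectivity'' is not by itself an argument: a continuous map with dense image need not be onto. What rescues it is an observation you should make explicit: if $f(u) = v$ with $v \in \Lambda_{q,N}$, then $u_q^{MN} = v_q^N = 0$, so \emph{every} root of $f(x) = v$ in $\overline{\mathbb{F}_q(T)}$ already lies in $\Lambda_{q,MN} \subset \mathbb{T}_q$; hence the Hensel-selected preimages of a Cauchy sequence $v_n \to v^*$ can be chosen Cauchy inside $\mathbb{T}_q$, and their limit lies in $\mathbb{S}_q$ (being merely \emph{close} to $\mathbb{T}_q$ would not suffice, since $\mathbb{S}_q$ is not open in $V_q$ by Lemma \ref{sqnowheredense}). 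Second, Lemma \ref{Henselgeneral} as stated requires $f \in R[x]$ and $f'(\alpha)$ a unit of the local ring, whereas here $f'(u) = M$ has $v_\mathfrak{P}(M) = -(q-1)\deg(M) < 0$ for nonconstant $M$, so the coefficients do not lie in $\vartheta_\mathfrak{P}$; you need the Newton--Hensel refinement (a root exists near $\alpha$ whenever $v_\mathfrak{P}(f(\alpha)) > 2\,v_\mathfrak{P}(f'(\alpha))$) or a rescaling of the variable. Both points are repairable, and once repaired your argument proves strictly more than the paper's.
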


\begin{proof} If $u \in \mathbb{T}_q$, then by definition of $\mathbb{T}_q$, $u \in \Lambda_{q,N}$ for some $N \in \mathbb{F}_q[T]$. Hence $N \cdot_q u = u_q^N = 0$. In particular, we obtain $$N \cdot_q (M \cdot_q u) = NM \cdot_q u = MN \cdot_q u = M \cdot_q (N \cdot_q u) = M \cdot_q 0 = 0,$$ and hence that $M \cdot_q u \in \Lambda_{q,N} \subset \mathbb{T}_q$, so that the action of $M$ induces a map $M  : \mathbb{T}_q \rightarrow \mathbb{T}_q$. To see that this is in fact a covering, let once more $u \in \Lambda_{q,N}$ for some $N \in \mathbb{F}_q[T]$ once . We wish to find a $v \in \mathbb{T}_q$ such that $M \cdot_q v = u$. Let $$N = \prod_{i=1}^r P_i^{\alpha_i}$$ denote the decomposition of $N$ into distinct primes $P_1,\ldots,P_r \in \mathbb{F}_q[T]$. We also have the prime decomposition of torsion points \cite[Proposition 12.2.13]{Vil}  \begin{equation}\label{primedecomp} \Lambda_{q,N} = \bigoplus_{i=1}^r \Lambda_{q,P_i^{\alpha_i}}.\end{equation} For each irreducible polynomial $P \in \mathbb{F}_q[T]$ and $n \in \mathbb{N}$, the homomorphism of $\mathbb{F}_q[T]$ modules \begin{equation} \label{Psurj} P \cdot_q u : \Lambda_{q,P^{n+1}} \rightarrow \Lambda_{q,P^n}\end{equation} is surjective \cite[Proposition 1.5]{Hay1}. We therefore let $$M = \prod_{j=1}^s Q_i^{\beta_j}$$ be the decomposition of $M$ into distinct primes $Q_1,\ldots,Q_s \in \mathbb{F}_q[T]$. Then $MN$ admits the factorisation $$MN = \prod_{i=1}^r P_i^{\alpha_i + \beta_i} \prod_{Q_j \neq P_i} Q_j^{\beta_j}.$$ By \eqref{Psurj}, the maps $$P_i^{\beta_i} \cdot_q u: \Lambda_{q,P_i^{\alpha_i + \beta_i }} \rightarrow \Lambda_{q,P_i^{\alpha_i}} \qquad  (i=1,\ldots,r)$$ and $$Q_j^{\beta_j} \cdot_q u: \Lambda_{q,Q_j^{\beta_j}} \rightarrow \{0\} \qquad (Q_j \neq P_i)$$ are surjective. Also, similarly to \eqref{primedecomp}, we find \begin{equation}\label{secondprimedecomp} \Lambda_{q,MN} = \left(\bigoplus_{i=1}^r \Lambda_{q,P_i^{\alpha_i + \beta_i }}\right) \bigoplus \left(\bigoplus_{Q_j \neq P_i} \Lambda_{q,Q_j^{\beta_j}} \right).\end{equation} Therefore, the map $$M \cdot_q u : \Lambda_{q,MN} \rightarrow \Lambda_{q,N}$$ is surjective. As $\Lambda_{q,MN} \subset \mathbb{T}_q$, the induced map $M  : \mathbb{T}_q \rightarrow \mathbb{T}_q$ is then also surjective. By passing to completions, the result follows. 
	
\end{proof}

\begin{rem} Much like the exponentiation $z \rightarrow z^n$ on $\mathbb{C}$, the extension of $M: \mathbb{S}_q \rightarrow \mathbb{S}_q$ to a map on the completion $(\overline{\mathbb{F}_q(T)})_\mathfrak{P}$ of $\overline{\mathbb{F}_q(T)}$ at infinity is surjective, as we may write $$M \cdot_q u = \sum_{i=1}^d \bfrac{M}{i}_q u^{q^i},\qquad \bfrac{M}{i}_q \in \mathbb{F}_q[T],\quad  i=1,\ldots,d=\deg(M),$$ \cite[Proposition 1.1]{Hay1}, so that the equation $M \cdot_q u = \alpha$ is simply an algebraic equation over $(\overline{\mathbb{F}_q(T)})_\mathfrak{P}$ for $\alpha \in (\overline{\mathbb{F}_q(T)})_\infty$, and hence takes its roots in $(\overline{\mathbb{F}_q(T)})_\mathfrak{P}$.
\end{rem}

By definition, the complex unit circle $S^1$ exists within $\mathbb{C}$, a vector space of dimension 2 over $\mathbb{R}$. We now proceed to show that $\mathbb{S}_q$ similarly lies within a vector space of finite dimension over $(\mathbb{F}_q(T))_\infty$. For this, we need a few lemmas. We again let $\mathfrak{p}_\infty$ denote the place of $\mathbb{F}_q(T)$ at infinity. For an algebraic extension $K/\mathbb{F}_q(T)$ and place $\mathfrak{P}$ of $K$ above $\mathfrak{p}_\infty$, we also let $e(\mathfrak{P}|\mathfrak{p}_\infty)$ denote the ramification index of $\mathfrak{P}|\mathfrak{p}_\infty$ and $f(\mathfrak{P}|\mathfrak{p}_\infty)$ the inertia degree.

\begin{lem} \label{Pninf} Let $M = P^n$, where $P^n \in \mathbb{F}_q[T]$ is irreducible. Let $\mathfrak{P}$ be a place of $K_{q,P^n}$ above $\mathfrak{p}_\infty$. Then 
$e(\mathfrak{P}|\mathfrak{p}_\infty) = q-1$ and $f(\mathfrak{P}|\mathfrak{p}_\infty) = 1$.
\end{lem}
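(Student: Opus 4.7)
The strategy is to identify the decomposition group $D_\mathfrak{P}$ inside $G := \text{Gal}(K_{q,P^n}/\mathbb{F}_q(T)) \cong (\mathbb{F}_q[T]/P^n)^*$ with the scalar subgroup $\mathbb{F}_q^* \hookrightarrow (\mathbb{F}_q[T]/P^n)^*$. Since $\mathbb{F}_q^*$ has order $q-1$ and acts trivially on the residue field $\mathbb{F}_q$ of $\mathfrak{p}_\infty$, the decomposition group will coincide with the inertia group, yielding $e=q-1$ and $f=1$ simultaneously.

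I would start by fixing a primitive generator $\lambda \in \Lambda_{q,P^n}$. For $c \in \mathbb{F}_q^* \hookrightarrow (\mathbb{F}_q[T]/P^n)^*$, the associated Galois element acts by $\sigma_c(\lambda) = c\lambda$ (by \cite[Remark~12.2.4]{Vil}), and because $c$ is a $v_\infty$-unit, the $\mathbb{F}_q(T)$-linear map $\sigma_c$ preserves every extension of $v_\infty$ to $K_{q,P^n}$. Hence $\sigma_c$ fixes $\mathfrak{P}$, proving $\mathbb{F}_q^* \subseteq D_\mathfrak{P}$.

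Next, using the identity $\prod_{c \in \mathbb{F}_q^*}(u - cx) = u^{q-1} - x^{q-1}$ and grouping the primitive $P^n$-torsion into $\mathbb{F}_q^*$-orbits produces a factorization
\begin{equation*}
\Psi_{q,P^n}(u) \;=\; \prod_{[A] \in (\mathbb{F}_q[T]/P^n)^*/\mathbb{F}_q^*}\bigl(u^{q-1} - (\lambda_q^A)^{q-1}\bigr) \;=\; g_n(u^{q-1}),
\end{equation*}
where $g_n(X) \in \mathbb{F}_q(T)[X]$ is the minimal polynomial of $\lambda^{q-1}$, of degree $\Phi(P^n)/(q-1)$, and $\mathbb{F}_q(T)(\lambda^{q-1}) = K_{q,P^n}^{\mathbb{F}_q^*}$. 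I would then analyze the Newton polygon of $\Psi_{q,P^n}$ at $\mathfrak{p}_\infty$ using $\deg \bfrac{P^n}{i}_q = (nd-i)q^i$ from \cite[Prop.~1.1]{Hay1}: the slopes along the segments attached to primitive $P^n$-torsion have denominator exactly $q-1$ in lowest terms. After the substitution $X = u^{q-1}$, the Newton polygon of $g_n$ has only integer slopes, and the residual polynomials at each vertex split into distinct linear factors over $\mathbb{F}_q$; by Hensel's lemma $g_n$ then splits completely over $(\mathbb{F}_q(T))_\infty$, and $\mathfrak{p}_\infty$ splits into $\Phi(P^n)/(q-1)$ distinct primes in $K_{q,P^n}^{\mathbb{F}_q^*}$, each with trivial $e$ and $f$.

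Finally, above each such intermediate prime $\mathfrak{q}$, the extension $K_{q,P^n}/K_{q,P^n}^{\mathbb{F}_q^*}$ is generated by a root of $u^{q-1} - \lambda^{q-1}$; as $v_\mathfrak{q}(\lambda^{q-1})$ is coprime to $q-1$ by the denominator computation above, this is a tamely totally ramified Kummer-type extension of degree $q-1$. Combining this with $\mathbb{F}_q^* \subseteq D_\mathfrak{P}$ from the first step, each prime $\mathfrak{P}$ of $K_{q,P^n}$ above $\mathfrak{p}_\infty$ must satisfy $e(\mathfrak{P}|\mathfrak{p}_\infty) = q-1$ and $f(\mathfrak{P}|\mathfrak{p}_\infty) = 1$. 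The main obstacle is verifying that the residual polynomials of $g_n$ along its Newton polygon split into distinct linear factors over $\mathbb{F}_q$; this requires identifying the leading-order form of the Carlitz coefficients $\bfrac{P^n}{i}_q$ in terms of the leading coefficient of $P$, and is the technical heart of the proof.
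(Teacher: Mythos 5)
Your overall architecture is sound and is essentially the standard one (note that the paper does not prove this lemma itself but cites Hayes, Theorem 3.2, whose argument is precisely a Newton polygon analysis at infinity): pass to the fixed field $K^+ = K_{q,P^n}^{\mathbb{F}_q^*} = \mathbb{F}_q(T)(\lambda^{q-1})$, show that $\mathfrak{p}_\infty$ splits completely there, and then show that the Kummer step $u^{q-1} = \lambda^{q-1}$ is totally and tamely ramified because $v(\lambda^{q-1}) \equiv -1 \pmod{q-1}$. The factorization $\Psi_{q,P^n}(u) = g_n(u^{q-1})$ and the final Kummer step are correct.

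The gap is in the step you yourself flag as the technical heart: the claim that the residual polynomials of $g_n$ along its Newton polygon split into \emph{distinct} linear factors over $\mathbb{F}_q$, so that Hensel's lemma yields complete splitting of $g_n$ over $(\mathbb{F}_q(T))_\infty$. This is false in general. Take $q = 3$, $P = T$, $n = 2$. Then $u_q^{T^2} = u^9 + (T^3+T)u^3 + T^2u$ and $\Psi_{q,T^2}(u) = g_2(u^2)$ with $g_2(X) = X^3 - TX^2 + T^2X + T$. At $\mathfrak{p}_\infty$ the Newton polygon of $g_2$ has a segment of horizontal length $2$ through the vertices $(1,-2)$, $(2,-1)$, $(3,0)$, and the attached residual polynomial is $Y^2 - Y + 1 = (Y+1)^2$ over $\mathbb{F}_3$, which is inseparable: the two roots of $g_2$ of valuation $-1$ are distinct but congruent modulo the maximal ideal after normalization, so the form of Hensel's lemma you invoke does not apply. (The conclusion is still true: substituting $\lambda = \mu\kappa$ with $\mu^2 = -T$ turns $\lambda^3 + T\lambda = \mu$ into $\kappa^3 - \kappa + T^{-1} = 0$, whose reduction $\kappa(\kappa-1)(\kappa+1)$ is separable, so Hensel applies one level down and $g_2$ does split over $\mathbb{F}_3((1/T))$.) To repair the argument you need either a higher-order Newton polygon analysis or, more cleanly, the inductive local argument of Hayes and of Galovich--Rosen showing $(K_{q,P^n})_\mathfrak{P} = \mathbb{F}_q((\sqrt[q-1]{-1/T}))$ for all $n$, i.e., that the primitive $P^n$-torsion adds nothing locally at infinity beyond the first layer. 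A secondary, smaller issue: in your first step, knowing $\sigma_c(\lambda) = c\lambda$ does not by itself show that $\sigma_c$ fixes every place above $\mathfrak{p}_\infty$, since a valuation on $K_{q,P^n}$ is not determined by its value on a single generator; this is harmless, however, because $\mathbb{F}_q^* \subseteq I_\mathfrak{P}$ follows from the total ramification of the Kummer step once that step is established.
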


\begin{proof} \cite[Theorem 3.2]{Hay1}. \end{proof}

The proof of Lemma \ref{Pninf} is a detailed study of the Newton polygon at infinity for the Carlitz action $M \cdot_q u$. 

\begin{lem} Let $M \in \mathbb{F}_q[T]$. Let $\mathfrak{P}$ be a place of $K_{q,M}$ above $\mathfrak{p}_\infty$. Then 
$e(\mathfrak{P}|\mathfrak{p}_\infty) = q-1$ and $f(\mathfrak{P}|\mathfrak{p}_\infty) = 1$.
\end{lem}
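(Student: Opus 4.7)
The plan is to reduce to Lemma~\ref{Pninf} via the compositum structure of $K_{q,M}$, control the inertia by tameness, and identify the local extensions at infinity. First, factor $M = \prod_{i=1}^s P_i^{n_i}$ into distinct monic irreducibles and set $L_i := K_{q,P_i^{n_i}}$. The Chinese Remainder isomorphism
$$\text{Gal}(K_{q,M}/\mathbb{F}_q(T)) \simeq \prod_{i=1}^s (\mathbb{F}_q[T]/P_i^{n_i})^* \simeq \prod_{i=1}^s \text{Gal}(L_i/\mathbb{F}_q(T))$$
exhibits $K_{q,M}$ as the compositum of the pairwise linearly disjoint extensions $L_i/\mathbb{F}_q(T)$. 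Fix $\mathfrak{P}$ above $\mathfrak{p}_\infty$ with $\mathfrak{P}_i := \mathfrak{P}\cap L_i$; by Lemma~\ref{Pninf} each $L_i$ has $e(\mathfrak{P}_i|\mathfrak{p}_\infty)=q-1$ and $f(\mathfrak{P}_i|\mathfrak{p}_\infty)=1$. Let $D, I$ and $D_i, I_i$ denote the decomposition and inertia groups at $\mathfrak{P}$ and $\mathfrak{P}_i$ respectively.

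Next I would control the inertia. Under the product identification of the Galois group, $I$ embeds into $\prod_i I_i$, since any element acting trivially on every $L_i$ acts trivially on their compositum. The group $\prod_i I_i$ has order $(q-1)^s$, which is coprime to $p = \text{char}(\mathbb{F}_q)$. Thus $I$ has order prime to $p$, so it is tame and hence cyclic; a cyclic subgroup of $\prod_i I_i \simeq (\mathbb{Z}/(q-1)\mathbb{Z})^s$ has order dividing $q-1$, and combined with the surjection $I \twoheadrightarrow I_i$ onto a group of order $q-1$, this forces $|I| = q-1$. Moreover, the diagonal $\mathbb{F}_q^* \hookrightarrow (\mathbb{F}_q[T]/M)^*$ (image of constants) lies inside $I$: constants act on torsion by scalar multiplication, preserving valuations at $\mathfrak{P}$ and acting trivially on the residue field. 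Since $|\mathbb{F}_q^*| = |I| = q-1$, this diagonal \emph{equals} $I$.

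To conclude $f(\mathfrak{P}|\mathfrak{p}_\infty)=1$, I would identify the local extensions. The completion $K_{q,M}^{\wedge,\mathfrak{P}}$ is the compositum in $\overline{F_\infty}$ of the $L_i^{\wedge,\mathfrak{P}_i}$ (where $F_\infty := (\mathbb{F}_q(T))_\infty$), each of which is a tame totally ramified cyclic extension of $F_\infty$ of degree $q-1$. A Newton polygon argument at $\mathfrak{p}_\infty$, in the spirit of the proof of Lemma~\ref{Pninf}, shows that each such local extension is generated by a common Carlitz period $\widetilde\pi$, so the $L_i^{\wedge,\mathfrak{P}_i}$ all coincide inside $\overline{F_\infty}$. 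Hence $[K_{q,M}^{\wedge,\mathfrak{P}}:F_\infty] = q-1 = |I|$, giving $D = I$, $e(\mathfrak{P}|\mathfrak{p}_\infty) = q-1$, and $f(\mathfrak{P}|\mathfrak{p}_\infty) = 1$. The main obstacle is this final identification: the purely Galois-theoretic steps give $|I| = q-1$ cleanly, but bounding $|D|$ requires the Carlitz-specific local fact that all tame ramified extensions of $F_\infty$ arising from cyclotomic function fields coincide at $\mathfrak{p}_\infty$.
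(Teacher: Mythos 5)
Your proposal is correct and follows essentially the same route as the paper: factor $M$ into prime powers, view $K_{q,M}$ as the compositum of the $K_{q,P_i^{\alpha_i}}$, use tameness to force $e(\mathfrak{P}|\mathfrak{p}_\infty)=q-1$ (the paper invokes Abhyankar's lemma where you run the equivalent inertia-group argument), and then obtain $f(\mathfrak{P}|\mathfrak{p}_\infty)=1$ by identifying the local completions. The ``Carlitz-specific local fact'' you correctly isolate as the crux is precisely what the paper cites at this point: by \cite[Lemma 1.1]{GaRo} every completion $(K_{q,M})_{\mathfrak{P}}$ equals $\mathbb{F}_q((\sqrt[q-1]{-1/T}))$, so the local extensions all coincide and $D=I$ as you claim.
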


\begin{proof} Let $$M = \prod_{i=1}^r P_i^{\alpha_i}$$ be the decomposition of $M$ into distinct prime factors $P_1,\ldots,P_r \in \mathbb{F}_q[T]$. By Lemma \ref{Pninf}, the ramification indices at infinity within $K_{q,P_i^{\alpha_i}}$ $(i=1,\ldots,r)$ are equal to $q-1$, i.e., $\mathfrak{p}_\infty$ is tamely ramified in $K_{q,P_i^{\alpha_i}}$. By Abhyankar's lemma and the fact that $$K_{q,M} = \prod_{i=1}^r  K_{q,P_i^{\alpha_i}},$$ it follows that $e(\mathfrak{P}|\mathfrak{p}_\infty) = q-1$. 

We must now show that $f(\mathfrak{P}|\mathfrak{p}_\infty) = 1$. This is a consequence of the fact that the completion $(K_{q,M})_\mathfrak{P}$ of $K_{q,M}$ at $\mathfrak{P}$ satisfies $$(K_{q,M})_\mathfrak{P} = \mathbb{F}_q((^{q-1} \sqrt{-1/T}))$$ \cite[Lemma 1.1]{GaRo}.   \end{proof}

We have shown that $$[(\mathbb{F}_q(T))_\infty(\Lambda_{q,M}):(\mathbb{F}_q(T))_\infty] = e(\mathfrak{P}|\mathfrak{p}_\infty)f(\mathfrak{P}|\mathfrak{p}_\infty) = q-1$$ for every $M \in \mathbb{F}_q[T]$. This allows us to prove the following:

\begin{thm} \label{vsq-1} The field $(\mathbb{F}_q(T))_\infty (\mathbb{S}_q)$ forms a vector space over $(\mathbb{F}_q(T))_\infty $ of dimension equal to $q-1$.
\end{thm}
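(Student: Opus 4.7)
The plan is to show that the field $(\mathbb{F}_q(T))_\infty(\mathbb{S}_q)$ coincides with the single local field $\mathbb{F}_q((^{q-1}\sqrt{-1/T}))$ that appeared in the previous lemma, from which the dimension statement is immediate. The key observation is that the preceding lemma not only yields $e(\mathfrak{P}|\mathfrak{p}_\infty) f(\mathfrak{P}|\mathfrak{p}_\infty) = q-1$ for every $M$, but also identifies each completion $(K_{q,M})_\mathfrak{P}$ with the \emph{same} abstract tame extension $\mathbb{F}_q((^{q-1}\sqrt{-1/T}))$ of $(\mathbb{F}_q(T))_\infty = \mathbb{F}_q((1/T))$.

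First, I would fix once and for all the branch $\mathfrak{P}$ of infinity used in the definition of $\mathbb{S}_q$; this pins down a single embedding of $\overline{\mathbb{F}_q(T)}$ into $(\overline{\mathbb{F}_q(T)})_\mathfrak{P}$, and hence a single copy of each $(\mathbb{F}_q(T))_\infty(\Lambda_{q,M})$ inside this completion. Since $(\mathbb{F}_q(T))_\infty(\Lambda_{q,M})$ is a finite extension of the complete field $(\mathbb{F}_q(T))_\infty$, it is itself complete, and the previous lemma identifies it with $\mathbb{F}_q((^{q-1}\sqrt{-1/T}))$.

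Next, I would observe that as $M$ ranges over $\mathbb{F}_q[T]\backslash\{0\}$, the various $(\mathbb{F}_q(T))_\infty(\Lambda_{q,M})$ all lie inside this fixed completion at $\mathfrak{P}$, and each coincides with the same field $\mathbb{F}_q((^{q-1}\sqrt{-1/T}))$. Consequently,
\[
(\mathbb{F}_q(T))_\infty(\mathbb{T}_q) \;=\; \bigcup_{M \in \mathbb{F}_q[T]\backslash\{0\}} (\mathbb{F}_q(T))_\infty(\Lambda_{q,M}) \;=\; \mathbb{F}_q((^{q-1}\sqrt{-1/T})),
\]
which is already complete. Passing to the topological closure inside $(\overline{\mathbb{F}_q(T)})_\mathfrak{P}$ to incorporate $\mathbb{S}_q$ adds nothing new, so $(\mathbb{F}_q(T))_\infty(\mathbb{S}_q) = \mathbb{F}_q((^{q-1}\sqrt{-1/T}))$, which has $(\mathbb{F}_q(T))_\infty$-dimension $q-1$ by the previous lemma.

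The main subtlety is verifying that the different finite extensions generated by the various $\Lambda_{q,M}$ do not accumulate into a field of larger degree over $(\mathbb{F}_q(T))_\infty$. This is where the strength of the previous lemma is essential: each such extension is not merely of degree $q-1$, but is literally the same totally ramified tame extension $\mathbb{F}_q((^{q-1}\sqrt{-1/T}))$ inside the ambient completion determined by the fixed branch $\mathfrak{P}$, so their compositum collapses to a single copy rather than growing.
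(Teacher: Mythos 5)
Your proof is correct, but the mechanism you use for the key ``collapsing'' step differs from the paper's. The paper never invokes the explicit identification of the completions; instead it builds a cofinal nested family of polynomials $\mathbf{P}^{\mathbf{a}_n}$ (so that every $M$ divides some $\mathbf{P}^{\mathbf{a}_n}$ and $\Lambda_{q,\mathbf{P}^{\mathbf{a}_n}} \subset \Lambda_{q,\mathbf{P}^{\mathbf{a}_{n+1}}}$), and then observes that a nested chain of extensions, each of the same finite degree $q-1$ by the preceding lemma, must stabilize, so the union $V$ is a single $(q-1)$-dimensional (hence complete, hence closed) field containing $\mathbb{T}_q$ and therefore $\mathbb{S}_q$. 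Your route instead quotes the stronger content of \cite[Lemma 1.1]{GaRo}, namely that every $(K_{q,M})_\mathfrak{P}$ \emph{is} $\mathbb{F}_q((^{q-1}\sqrt{-1/T}))$, and concludes that all the local fields coincide. This is valid, and it even yields a sharper conclusion than the theorem states (an explicit Kummer description of $V_q$), but you should make one step explicit: ``isomorphic to the same abstract extension'' does not in general force two subfields of a fixed completion to be equal; here it does because $\mu_{q-1} \subset \mathbb{F}_q \subset (\mathbb{F}_q(T))_\infty$, so $(\mathbb{F}_q(T))_\infty(^{q-1}\sqrt{-1/T})$ is a Kummer, hence Galois (normal), extension, and a normal extension has a unique image under any base-fixing embedding into $(\overline{\mathbb{F}_q(T)})_\mathfrak{P}$. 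The trade-off is that the paper's argument needs only the degree count $e(\mathfrak{P}|\mathfrak{p}_\infty)f(\mathfrak{P}|\mathfrak{p}_\infty)=q-1$ together with the divisibility lattice of $\mathbb{F}_q[T]$, while yours needs the finer structural identification plus the normality observation; in exchange yours pins down $V_q$ explicitly.
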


\begin{proof} Let $\mathbf{P} = \{P_1,P_2,\ldots\} \subset \mathbb{F}_q[T]$ denote the collection of monic, irreducible polynomials in $\mathbb{F}_q[T]$. We define the sequence of infinite tuples $\{\mathbf{a}_1,\mathbf{a}_2,\ldots\}$, where for each $n \in \mathbb{N}$, $$\mathbf{a}_n := (n,n-1,\ldots,1,0,0,\ldots) \quad \text{and} \quad\mathbf{P}^{\mathbf{a}_n} := P_1^n P_2^{n-1} \cdots P_n^1.$$ Then the sequence of polynomials $\{\mathbf{P}^{\mathbf{a}_n}\}_{n \in \mathbb{N}} \subset \mathbb{F}_q[T]$ satisfies the following conditions:

\begin{enumerate}
\item If $M \in \mathbb{F}_q[T]$, then there exists an $n \in \mathbb{N}$ such that $M\mid \mathbf{P}^{\mathbf{a}_n}$.
\item For each $n \in \mathbb{N}$, $\Lambda_{q,\mathbf{P}^{\mathbf{a}_n}} \subset \Lambda_{q,\mathbf{P}^{\mathbf{a}_{n+1}}}$.
\item For each $n \in \mathbb{N}$, $[(\mathbb{F}_q(T))_\infty(\Lambda_{q,\mathbf{P}^{\mathbf{a}_n}}):(\mathbb{F}_q(T))_\infty] = q-1$.
\end{enumerate}
It follows that $$\mathbb{T}_q \subset V: = \bigcup_{n \in \mathbb{N}} (\mathbb{F}_q(T))_\infty(\Lambda_{q,\mathbf{P}^{\mathbf{a}_n}}),$$ and that $V$ is a vector space which satisfies $\dim(V/(\mathbb{F}_q(T))_\infty) = q-1$. By completeness of $(\mathbb{F}_q(T))_\infty$, it then follows that $V$ is also complete, and hence that $\mathbb{S}_q \subset V$. Therefore, $V = (\mathbb{F}_q(T))_\infty (\mathbb{S}_q)$. \end{proof}
\begin{defn} $V_q := (\mathbb{F}_q(T))_\infty (\mathbb{S}_q)$. \end{defn} 

In order to determine curvature, we have already defined an additive action on $\mathbb{S}_q$ by the subspace $\mathbb{F}_q(T))_\infty \subset V_q$ (Definition~\ref{def:curvature}). We may now extend this to all of $V_q$: 

\begin{defn} \label{def:curvaturecenter} If $v \in V_q$, we let $\mathbb{S}_q(v) = v + \mathbb{S}_q$.
\end{defn}

\begin{rem} In Definition \ref{def:curvaturecenter} the element $v$ is the function field analogue to the curvature-center of a general circle $C \subset \mathbb{C}$, which is defined as $k_C \cdot z_C$, where $k_C = 1/r_C$ is the usual definition of curvature of $C$, equal to the inverse of the radius $r_C$ of $C$, and $z_C$ is the center of $C$.	
\end{rem}

We may now define the M\"{o}bius transformations. If $v \in V_q$ and $w \in \mathbb{S}_q(v) \cap \mathbb{S}_q$, then there exists $s \in \mathbb{S}_q$ such that $w = v + s$. As $\mathbb{S}_q$ is an additive group, it follows that $v = w - s \in \mathbb{S}_q$, and hence that $\mathbb{S}_q(v) = \mathbb{S}_q$. Furthermore, as $0 \in \mathbb{S}_q$, it follows that $v \in \mathbb{S}_q(v)$ for each $v \in V_q$. Thus, the sets $\mathbb{S}_q(v)$ ($v \in V_q$) partition $V_q$. We let $R_q$ denote a collection of representatives for this partition, so that \begin{equation} \label{disjoint} V_q = \bigsqcup_{\alpha \in R_q} \mathbb{S}_q(\alpha).\end{equation} Hence for each $w \in V_q$, the action $w_+ : V_q \rightarrow V_q$ defined by $w_+(v) := w + v$ induces a permutation on the collection $\{\mathbb{S}_q(\alpha)\}_{\alpha \in R_q}$. 

\begin{lem} \label{infinitysq} $\mathbb{S}_q \cap (\mathbb{F}_q(T))_\infty = \{0\}$. 
\end{lem}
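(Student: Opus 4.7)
The plan is to exploit the Galois trace $\mathrm{Tr} := \mathrm{Tr}_{V_q/(\mathbb{F}_q(T))_\infty}$ as a continuous $(\mathbb{F}_q(T))_\infty$-linear functional that vanishes identically on $\mathbb{T}_q$ while acting as nonzero scalar multiplication on $(\mathbb{F}_q(T))_\infty$; the intersection is then forced to be trivial.

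First, I would pin down the Galois structure of $V_q/(\mathbb{F}_q(T))_\infty$. The lemma giving $e(\mathfrak{P}|\mathfrak{p}_\infty) = q-1$ and $f(\mathfrak{P}|\mathfrak{p}_\infty) = 1$ says that each $(\mathbb{F}_q(T))_\infty(\Lambda_{q,M})/(\mathbb{F}_q(T))_\infty$ is Galois of degree $q-1$, and its Galois group is the decomposition group at $\mathfrak{P}$ inside $\mathrm{Gal}(K_{q,M}/\mathbb{F}_q(T)) \cong (\mathbb{F}_q[T]/M)^*$. The identification $(K_{q,M})_\mathfrak{P} = \mathbb{F}_q(({^{q-1}\sqrt{-1/T}}))$ used in the preceding proof shows that this subgroup is precisely the copy of $\mathbb{F}_q^*$ of constants, and the Carlitz action of $a \in \mathbb{F}_q^*$ on $u \in \Lambda_{q,M}$ is plain scalar multiplication, $\sigma_a(u) = a \cdot_q u = au$. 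Passing to the nested union yields $\mathrm{Gal}(V_q/(\mathbb{F}_q(T))_\infty) \cong \mathbb{F}_q^*$ acting on every element of $\mathbb{T}_q$ by scalars.

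With this setup, for any $u \in \mathbb{T}_q$ (so $u \in \Lambda_{q,M}$ for some $M$) I compute
$$\mathrm{Tr}(u) \;=\; \sum_{a \in \mathbb{F}_q^*} au \;=\; \Big(\sum_{a \in \mathbb{F}_q^*} a\Big)\, u \;=\; 0,$$
since (for $q > 2$, consistent with the setting of Proposition \ref{infinitytq}) the nonzero elements of $\mathbb{F}_q$ sum to zero: they are the roots of $x^{q-1} - 1$, whose $x^{q-2}$ coefficient vanishes. Because $\mathrm{Tr}$ is a finite sum of continuous Galois automorphisms on the finite-dimensional normed space $V_q$, it is itself continuous, so the identity $\mathrm{Tr}(u) = 0$ extends from $\mathbb{T}_q$ to its closure $\mathbb{S}_q$. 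Finally, any $u \in \mathbb{S}_q \cap (\mathbb{F}_q(T))_\infty$ is Galois-fixed, so $\mathrm{Tr}(u) = (q-1)u \equiv -u \pmod{p}$; combining with $\mathrm{Tr}(u) = 0$ forces $u = 0$.

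The step I expect to require the most care is the identification of the decomposition group at $\infty$ with the copy of $\mathbb{F}_q^*$ in $(\mathbb{F}_q[T]/M)^*$, together with the fact that its elements act on torsion points by ordinary scalar multiplication; however this is already essentially unpacked in the use of $(K_{q,M})_\mathfrak{P} = \mathbb{F}_q(({^{q-1}\sqrt{-1/T}}))$ just invoked in the proof of Theorem \ref{vsq-1}, so no new machinery is needed.
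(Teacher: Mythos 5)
Your proof is correct (for $q>2$, a restriction you flag and which the paper also needs implicitly: its own argument via $(v_\mathfrak{P}(\lambda),q-1)=1$ is likewise vacuous when $q-1=1$), but it takes a genuinely different route. The paper argues with valuations: by \cite[Lemma 1.5]{GaRo} every nonzero $\lambda_n\in\mathbb{T}_q$ has $v_\mathfrak{P}(\lambda_n)\equiv -1\pmod{q-1}$, the non-Archimedean property transfers this congruence to the limit $\lambda\in\mathbb{S}_q$, and total ramification forces $v_\mathfrak{P}$ of any nonzero element of $(\mathbb{F}_q(T))_\infty$ to be divisible by $q-1$ --- contradiction. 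You instead use the Galois trace of the degree-$(q-1)$ extension $V_q/(\mathbb{F}_q(T))_\infty$: the decomposition group at $\mathfrak{P}$ is the copy of $\mathbb{F}_q^*$ inside $(\mathbb{F}_q[T]/M)^*$ acting on torsion by Carlitz exponentiation by constants, i.e.\ by scalar multiplication (this identification is indeed already implicit in the paper's use of $(K_{q,M})_\mathfrak{P}=\mathbb{F}_q((\sqrt[q-1]{-1/T}))$ and of \cite[Theorem 3.2]{Hay1}), so $\mathrm{Tr}(u)=\bigl(\sum_{a\in\mathbb{F}_q^*}a\bigr)u=0$ on $\mathbb{T}_q$, hence by continuity on $\mathbb{S}_q$, while $\mathrm{Tr}=(q-1)\cdot\mathrm{id}=-\mathrm{id}$ on the base field. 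Each approach buys something: the paper's valuation computation is reused later (e.g.\ in Lemma \ref{val-1}, the compactness proof, and Lemma \ref{sqmeasurezero}), whereas your argument yields the stronger structural fact that $\mathbb{S}_q$ lies in the trace-zero hyperplane $\ker(\mathrm{Tr})$, a closed $(\mathbb{F}_q(T))_\infty$-subspace of codimension one meeting $(\mathbb{F}_q(T))_\infty$ trivially, from which the lemma falls out as a corollary.
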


\begin{proof} Suppose that $\lambda \in \mathbb{S}_q \cap (\mathbb{F}_q(T))_\infty$ is nonzero. Let $\{\lambda_n\}_{n \in \mathbb{N}} \subset \mathbb{T}_q \backslash \{0\}$ such that $\lim_{n \rightarrow \infty} \lambda_n  = \lambda$. We let $\mathfrak{P}$ denote the place of $V_q$ above $\mathfrak{p}_\infty$. By \cite[Lemma 1.5]{GaRo}, $(v_\mathfrak{P}(\lambda_n),q-1) = 1$ for all $n \in \mathbb{N}$. By the non-Archimedean property, there exists $N \in \mathbb{N}$ such that $v_\mathfrak{P}(\lambda_n) = v_\mathfrak{P}(\lambda)$ for all $n \geq N$, from which it follows that \begin{equation} \label{lambdastarrelprime} (v_\mathfrak{P}(\lambda),q-1) = 1. \end{equation} But as $\lambda \in (\mathbb{F}_q(T))_\infty \backslash \{0\}$ and $\mathfrak{p}_\infty$ is fully ramified in $V_q$, we find $v_\mathfrak{P}(\lambda) = (q-1)v_{\mathfrak{p}_\infty}(\lambda)$, whence $q-1$ divides $v_\mathfrak{P}(\lambda)$, contradicting \eqref{lambdastarrelprime}.
\end{proof}

We note that when the element $\lambda$ in the proof of Lemma \ref{infinitysq} lies in $\mathbb{T}_q$, we obtain a second proof of Proposition \ref{infinitytq}. Via Lemma \ref{infinitysq}, each element of $(\mathbb{F}_q(T))_\infty$ can be chosen to represent distinct sets in the partition \eqref{disjoint}, so we may view $(\mathbb{F}_q(T))_\infty$ as a subset of $R_q$.

\begin{defn}\label{def:mobiusadditive}
The action of $V_q$ on itself defined for each $w \in V_q$ by $w_+ : V_q \rightarrow V_q$ forms the group of \emph{additive M\"{o}bius transformations} of $\mathbb{S}_q$.
\end{defn}
The additive M\"{o}bius transformations of $\mathbb{S}_q$, which are consistent with the additive Carlitz action, are the analogy to the classical \emph{diagonal} M\"{o}bius transformations. Clearly, by Proposition \ref{covering} and additivity of the map $M \cdot_q$, we have for each $v \in V_q$ that  $$M \cdot_q {\mathbb{S}_q(v)} = M \cdot_q (v + \mathbb{S}_q) =  M \cdot_q v + M \cdot_q \mathbb{S}_q = M \cdot_q v +  \mathbb{S}_q =  \mathbb{S}_q(M \cdot_q v).$$ Furthermore, as $\mathbb{S}_q$ is an additive group, we obtain for each $v,w \in V_q$ that $$\mathbb{S}_q(v+w) = v + w + \mathbb{S}_q = (v +  \mathbb{S}_q) + (w + \mathbb{S}_q) = \mathbb{S}_q(v) + \mathbb{S}_q(w).$$ We have thus obtained:

\begin{prop}
The set-valued map $\mathbb{S}_q : 	V_q \rightarrow V_q$ defined for each $v \in V_q$ by $\mathbb{S}_q \circ v = \mathbb{S}_q(v)$ is $\mathbb{F}_q[T]$-linear via the Carlitz action.
\end{prop}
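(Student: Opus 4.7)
The proposition is almost a packaging statement: the two identities needed for $\mathbb{F}_q[T]$-linearity of the set-valued map $\mathbb{S}_q$ have in fact just been derived in the paragraphs immediately preceding its statement. So my plan is to make explicit what ``$\mathbb{F}_q[T]$-linear via the Carlitz action'' means for a set-valued map, and then to observe that the two requirements are already verified.

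First, I would pin down the meaning of the statement. Interpreting $\mathbb{S}_q(v) + \mathbb{S}_q(w)$ as the Minkowski sum in the additive group $V_q$, and $M \cdot_q \mathbb{S}_q(v)$ as the image of the set $\mathbb{S}_q(v)$ under the Carlitz action $M \cdot_q$, the claim that $\mathbb{S}_q : V_q \rightarrow V_q$ is $\mathbb{F}_q[T]$-linear via the Carlitz action amounts to the two identities
\[
\mathbb{S}_q(v + w) = \mathbb{S}_q(v) + \mathbb{S}_q(w), \qquad \mathbb{S}_q(M \cdot_q v) = M \cdot_q \mathbb{S}_q(v),
\]
for all $v, w \in V_q$ and all $M \in \mathbb{F}_q[T]$.

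Next, I would verify each identity. The first is immediate from the fact that $\mathbb{S}_q$ is an additive subgroup of $V_q$ (Proposition~\ref{sqvectorspace}): indeed $\mathbb{S}_q + \mathbb{S}_q = \mathbb{S}_q$, so
\[
\mathbb{S}_q(v) + \mathbb{S}_q(w) = (v + \mathbb{S}_q) + (w + \mathbb{S}_q) = (v+w) + \mathbb{S}_q = \mathbb{S}_q(v+w).
\]
The second identity follows from Proposition~\ref{covering} (the covering $M \cdot_q : \mathbb{S}_q \to \mathbb{S}_q$ is in particular surjective, so $M \cdot_q \mathbb{S}_q = \mathbb{S}_q$) together with additivity of the Carlitz action:
\[
M \cdot_q \mathbb{S}_q(v) = M \cdot_q v + M \cdot_q \mathbb{S}_q = M \cdot_q v + \mathbb{S}_q = \mathbb{S}_q(M \cdot_q v).
\]

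Since each required identity is an immediate consequence of structure already established (Proposition~\ref{sqvectorspace} and Proposition~\ref{covering}), there is no real obstacle; the only subtle point worth flagging is that these set-theoretic equalities rely on the already-proved invariance $M \cdot_q \mathbb{S}_q = \mathbb{S}_q$, which in turn needed the surjectivity half of the covering statement and not merely the fact that $\mathbb{S}_q$ is mapped into itself. Once these observations are collected, the two identities display $\mathbb{S}_q$ as an $\mathbb{F}_q[T]$-module homomorphism $V_q \to \mathcal{P}(V_q)$ with respect to the Carlitz action on the source and Minkowski sum and image under $M \cdot_q$ on the target, which is exactly the claimed linearity.
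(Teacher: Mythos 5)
Your proposal is correct and follows essentially the same route as the paper: the paper's own derivation consists of exactly the two displayed identities you give, obtained from the additivity of $\mathbb{S}_q$ (so that $\mathbb{S}_q + \mathbb{S}_q = \mathbb{S}_q$ under Minkowski sum) and from Proposition~\ref{covering} together with the additivity of $M \cdot_q$ (so that $M \cdot_q \mathbb{S}_q = \mathbb{S}_q$). Your remark that the surjectivity half of the covering statement is what is really needed for $M \cdot_q \mathbb{S}_q = \mathbb{S}_q$ is a fair and accurate gloss on the paper's citation of that proposition.
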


This is a bit subtle and represents a key difference with the classical unit circle, because while $V_q$ is a vector space in the usual sense, the additive M\"{o}bius transformations come from the $\mathbb{F}_q[T]$-module structure of $\mathbb{S}_q$. The group $GL(V_q)$ also acts on $V_q$ via the usual matrix multiplication, and we may thus also define:

\begin{defn} \label{def:mobius} The action of $GL(V_q)$ on $V_q$ forms the group of \emph{multiplicative M\"{o}bius transformations} of $\mathbb{S}_q$.
\end{defn}

Having defined the two types of M\"{o}bius transformations, we notice that something is missing: The Carlitz action is still an $\mathbb{F}_q[T]$-action, whereas arithmetic on completions necessitates that it be an $(\mathbb{F}_q(T))_\infty$-action. This is the objective of the next section.

\subsection{Completing the Carlitz action}

The Carlitz exponential map defines an action on $\overline{\mathbb{F}_q(T)}$ by polynomials in $\mathbb{F}_q[T]$. We will show that it is possible to extend this in a natural way to an action by the Laurent series in $(\mathbb{F}_q(T))_\infty$. Let $\mathfrak{P}$ be the infinite place of $V_q$ above $\mathfrak{p}_\infty$. We state a few preliminary results. The first places a hard lower bound on the order of points on the $q$-unit circle at infinity: 

\begin{lem}\label{val-1} If $u \in \mathbb{S}_q$, then $v_\mathfrak{P}(u) \geq -1$. 
	\end{lem}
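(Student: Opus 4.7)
The strategy is to reduce the claim from $\mathbb{S}_q$ to $\mathbb{T}_q$ by continuity of the valuation, then to read the bound off the Newton polygon of the Carlitz polynomial $x_q^M$ at infinity.

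For the reduction, $v_\mathfrak{P}$ is the unique extension to the finite (hence complete) extension $V_q/(\mathbb{F}_q(T))_\infty$ of the discrete valuation $v_{\mathfrak{p}_\infty}$, so it is continuous on $V_q$. If $u \in \mathbb{S}_q$ is the limit of a sequence $u_n \in \mathbb{T}_q$ with $v_\mathfrak{P}(u_n) \geq -1$, then either $u = 0$ (in which case $v_\mathfrak{P}(u) = \infty$), or for all sufficiently large $n$ the ultrametric inequality yields $v_\mathfrak{P}(u) = v_\mathfrak{P}(u_n) \geq -1$. It therefore suffices to bound $v_\mathfrak{P}$ on $\mathbb{T}_q$.

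Fix nonzero $u \in \mathbb{T}_q$ and choose $M \in \mathbb{F}_q[T] \setminus \{0\}$ of degree $d$ with $u \in \Lambda_{q,M}$, so that $u$ is a nonzero root of
\[ f(x) = \sum_{i=0}^d \bfrac{M}{i}_q x^{q^i} \in (\mathbb{F}_q(T))_\infty[x]. \]
The key input is the degree formula $\deg \bfrac{M}{i}_q = (d-i)q^i$ from \cite[Proposition 1.1]{Hay1}, which gives $v_{\mathfrak{p}_\infty}(\bfrac{M}{i}_q) = -(d-i)q^i$. The Newton polygon of $f$ at $\mathfrak{p}_\infty$ thus has candidate vertices $(q^i, -(d-i)q^i)$ for $i = 0, 1, \ldots, d$, and a direct computation shows that the slope between consecutive candidates equals $\tfrac{q}{q-1} - (d-i)$, which is strictly increasing in $i$. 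Hence every candidate is a genuine vertex of the lower convex hull, and the $v_{\mathfrak{p}_\infty}$-valuation of each nonzero root of $f$ lies in the set $\{(d-i) - \tfrac{q}{q-1} : 0 \leq i \leq d-1\}$, whose minimum $-\tfrac{1}{q-1}$ is attained at $i = d-1$. Multiplying by the ramification index $e(\mathfrak{P}|\mathfrak{p}_\infty) = q-1$ supplied by the preceding lemma converts this into $v_\mathfrak{P}(u) \geq -1$, a bound which is moreover sharp.

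The main obstacle I anticipate is purely bookkeeping: keeping the Newton polygon sign conventions consistent (the valuation of a root equals the negative of the corresponding slope), tracking the factor $q-1$ between $v_\mathfrak{P}$ and $v_{\mathfrak{p}_\infty}$, and noting that the absence of an $x^0$ term in $f$ accounts for the single trivial root at $u = 0$, lying outside the sloping portion of the polygon. Beyond the degree formula for the Carlitz coefficients and the ramification data already established, no deeper ideas are required.
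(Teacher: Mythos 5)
Your proof is correct, and its overall shape matches the paper's: reduce from $\mathbb{S}_q$ to $\mathbb{T}_q$ by approximating $u$ with torsion points and invoking the ultrametric inequality, then bound the valuation on $\mathbb{T}_q$. The one real difference is in how that second step is handled. The paper simply cites Galovich--Rosen \cite[Lemma 1.5]{GaRo} for the fact that $v_\mathfrak{P}(\lambda) \geq -1$ (indeed $v_\mathfrak{P}(\lambda) = k(q-1)-1$ with $k \geq 0$) for all $\lambda \in \mathbb{T}_q$, whereas you rederive it from the Newton polygon of $f(x)=\sum_{i=0}^d \bfrac{M}{i}_q x^{q^i}$ at $\mathfrak{p}_\infty$. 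Your computation checks out: the candidate vertices $(q^i, -(d-i)q^i)$ give consecutive slopes $\tfrac{q}{q-1}-(d-i)$, strictly increasing in $i$, so every candidate is a genuine vertex; the root valuations $(d-i)-\tfrac{q}{q-1}$ rescale under $e(\mathfrak{P}|\mathfrak{p}_\infty)=q-1$ to $(d-i-1)(q-1)-1 \geq -1$, which recovers exactly the quantization $k(q-1)-1$ that the paper uses repeatedly, with the generator case $i=0$ giving $(d-1)(q-1)-1$ as in Lemma \ref{infinityperiodic}. This is essentially the same polygon the paper itself draws later in the proof of Lemma \ref{Kummerconverse} (there written in the variable $v=u^{q-1}$, with vertices $(\tfrac{q^i-1}{q-1},-(d-i)q^i)$), so your version buys self-containedness at the cost of a paragraph of bookkeeping, while the paper's buys brevity by outsourcing the estimate to the literature. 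Your handling of the trivial root at $0$ and of the degenerate case $u=0$ in the limiting step are both fine.
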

	\begin{proof} Let $\{u_n\}_{n \in \mathbb{N}} \subset \mathbb{T}_q$ be chosen such that $\lim_{n \rightarrow \infty} u_n = u$. By \cite[Lemma 1.5]{GaRo}, $v_\mathfrak{P}(u_n) \geq -1$ for all $n \in \mathbb{N}$. If $N \in \mathbb{N}$ is such that $v_\mathfrak{P}(u - u_n) \geq 0$ for all $n \geq N$, then we obtain $$v_\mathfrak{P}(u) = v_\mathfrak{P}(u-u_n + u_n ) \geq \min\{v_\mathfrak{P}(u-u_n),v_\mathfrak{P}(u_n )\} \geq -1.$$ Hence the result. \end{proof}
	
	Of course, if $v_\mathfrak{P}(\lambda) \geq 0$, then by the non-Archimedean property and $v_{\mathfrak{P}}(T) = -(q-1)$, we have $$v_\mathfrak{P}(\lambda_q^T) = v_\mathfrak{P}(\lambda^q + T\lambda) = \min\{v_\mathfrak{P}(\lambda^q),v_\mathfrak{P}( T\lambda)  \} = v_\mathfrak{P}( T\lambda) = v_\mathfrak{P}(\lambda) - (q-1).$$ Let $m \in \mathbb{N}$. The set $\Lambda_{q,T^m-1}$ is equal to the fixed point set of the action $(\phi_q + \mu_T)^m(u)$, consisting of precisely those $\lambda$ which satisfy \begin{equation}\label{fixedpoint}\lambda_q^{T^m-1} =  (\phi_q + \mu_T)^m(\lambda) - \lambda = 0.\end{equation} Let $\lambda = \lambda_0 \in \Lambda_{q,T^m-1}\backslash \{0\}$, and consider a series of the form \begin{equation}\label{laurentseries}\sum_{k=1}^{\infty} a_{-k} \lambda_{-k}.\quad \qquad \lambda_{-k+1} \in (\phi_q + \mu_T)(\lambda_{-k})\qquad (k \in \mathbb{N})\end{equation} If we choose $\lambda_{-lm} = \lambda_0 \neq 0$ for each $l= 1,2,\ldots$, then as as $u_0 \neq 0$, the series \eqref{laurentseries} does not converge. In particular, the converse of \cite[Lemma 1.6]{GaRo} is false: It very well might occur that $v_\mathfrak{P}(\lambda_q^T) > 0$ but $v_\mathfrak{P}(\lambda) = -1$. To this end, we prove a minor result about periodicity.
	
	\begin{lem}\label{infinityperiodic} Let $m \in \mathbb{N}$. There exists a generator $\lambda$ of $\Lambda_{q,T^m-1}$ such that the sequence of valuations $v_\mathfrak{P}(\lambda), v_\mathfrak{P}(\lambda_q^T),v_\mathfrak{P}(\lambda_q^{T^2})\ldots$ is periodic with period $m$.
	\end{lem}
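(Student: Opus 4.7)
The plan is to exploit the fixed-point characterization of $\Lambda_{q,T^m-1}$ recorded in \eqref{fixedpoint}: by definition, $\Lambda_{q,T^m-1}$ is the kernel of the Carlitz action by $T^m - 1$, and it is cyclic as an $\mathbb{F}_q[T]$-module with annihilator $(T^m-1)$. First, choose any $\mathbb{F}_q[T]$-module generator $\lambda$ of $\Lambda_{q,T^m-1}$.

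Then by \eqref{fixedpoint}, $\lambda_q^{T^m - 1} = 0$, equivalently $\lambda_q^{T^m} = (\phi_q + \mu_T)^m(\lambda) = \lambda$. Applying the multiplicativity $(u_q^A)_q^B = u_q^{AB}$ of the Carlitz action (recorded in Table~1) gives
\[
\lambda_q^{T^{k+m}} \;=\; (\lambda_q^{T^m})_q^{T^k} \;=\; \lambda_q^{T^k}
\]
for every $k \geq 0$. Applying $v_\mathfrak{P}$ to both sides yields $v_\mathfrak{P}(\lambda_q^{T^{k+m}}) = v_\mathfrak{P}(\lambda_q^{T^k})$, which is the asserted periodicity with period $m$.

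The only subtlety I anticipate concerns the exact sense of ``period''. The argument above shows that $m$ is a period, which I expect is what the statement intends. If one instead insists on $m$ being the \emph{minimal} period, then for a generator $\lambda$ the underlying sequence $(\lambda_q^{T^k})$ itself has minimal period exactly $m$: a relation $\lambda_q^{T^k} = \lambda$ with $1 \le k < m$ would place $T^k - 1$, and hence $\gcd(T^k-1,\, T^m-1) = T^{\gcd(k,m)} - 1$, into the annihilator of $\lambda$, a proper divisor of $T^m - 1$, contradicting that $\lambda$ generates $\Lambda_{q,T^m-1}$. The main obstacle under this stricter reading would be to rule out accidental collisions among the $v_\mathfrak{P}$-values of the $m$ iterates $\lambda, \lambda_q^T, \ldots, \lambda_q^{T^{m-1}}$, which would require a Newton polygon analysis at $\mathfrak{P}$ in the style of Lemma~\ref{Pninf}, exploiting that $\mathfrak{P}$ is totally (tamely) ramified of degree $q-1$ over $\mathfrak{p}_\infty$.
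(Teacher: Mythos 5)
There is a gap here. Under the weak reading of ``periodic with period $m$'' (meaning only that $m$ is \emph{a} period), the lemma is trivial and the existential quantifier is pointless: for \emph{every} $\lambda \in \Lambda_{q,T^m-1}$, generator or not, $\lambda_q^{T^m}=\lambda$ forces the element sequence, hence the valuation sequence, to admit $m$ as a period. The fact that the statement says ``there exists a generator'' is a strong signal that the intended claim is the minimal period, and indeed the paper's proof establishes exactly that: it does not take an arbitrary generator but invokes \cite[Proposition 1.10]{GaRo} to produce $\lambda \in \Lambda_{q,T^m-1}$ with $v_\mathfrak{P}(\lambda) = (m-1)(q-1)-1$ (which is automatically a generator by \cite[Lemma 1.5]{GaRo}), and then uses \cite[Lemma 1.6]{GaRo} --- equivalently the computation recorded just before the lemma, that $v_\mathfrak{P}(\lambda) \geq 0$ implies $v_\mathfrak{P}(\lambda_q^T) = v_\mathfrak{P}(\lambda) - (q-1)$ --- to conclude that the first $m$ valuations are the $m$ \emph{pairwise distinct} integers $j(q-1)-1$, $j = m-1, m-2, \ldots, 0$. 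Distinctness of one full period's worth of values, together with $\lambda_q^{T^m} = \lambda$, is what pins the minimal period at exactly $m$.

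You correctly prove that the sequence of \emph{elements} $\lambda_q^{T^k}$ has minimal period $m$ for any generator, and you honestly flag that collisions among the valuations are the remaining obstacle --- but you stop there, and that is precisely the content of the lemma. Moreover, the issue cannot be resolved for an arbitrary generator: when $T^m-1$ is squarefree, for instance, $\Lambda_{q,T^m-1}$ has many generators of valuation $-1$, and for such a $\lambda$ nothing prevents several of the iterates from sharing the value $-1$. So the missing idea is not merely ``a Newton polygon analysis'' applied to your chosen $\lambda$; it is the \emph{choice} of the generator of maximal valuation $(m-1)(q-1)-1$, for which the strict descent of the valuation by $q-1$ at each step makes the $m$ values within one period visibly distinct.
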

	
	\begin{proof} By \cite[Proposition 1.10]{GaRo}, there exists $\lambda \in \Lambda_{q,T^m-1}$ such that $v_\mathfrak{P}(\lambda) = (m-1)(q-1) -1$, and by \cite[Lemma 1.5]{GaRo}, $\lambda$ is a generator of $\Lambda_{q,T^m-1}$. By \cite[Lemma 1.6]{GaRo}, The elements $\lambda, \lambda_q^T, \ldots, \lambda_q^{T^{m-1}}$ have respective valuations $j(q-1)-1$ where $j=m-1,m-2,\ldots,0$. As $\lambda \in \Lambda_{q,T^m-1}$, we have $\lambda^{T^m} = \lambda$, hence the result.
	\end{proof}

In order to define the completed Carlitz action, we first require convergent sums of successive applications of the inverse of the map $u \rightarrow u_q^T$ to elements of $\mathbb{T}_q$. In the next result, we show that one can always find such a convergence. The proof is a bit delicate, as the inverse of this exponentiation by $T$ does not always increase valuations at $\mathfrak{P}|\mathfrak{p}_\infty$. 

\begin{prop}\label{carlitzconverge}
Let $u = v_0\in \mathbb{T}_q$, and let $M \in \mathbb{F}_q[T]$ be such that $u$ is a generator of $\Lambda_{q,M}$. One may choose a sequence $\{v_{-k}\}_{k\in \mathbb{N}}$ such that: 
\begin{enumerate} \item $v_{-k} \in \Lambda_{q,T^{k} M}$ for each $k \in \mathbb{N}$, \item $v_{-k+1} \in (\phi_q + \mu_T)(v_{-k})$ for each $k \in \mathbb{N}$, and \item  the series \begin{equation}\label{seriess}\mathfrak{S} = \sum_{k=1}^{\infty} a_{-k} v_{-k}\qquad a_{-k} \in \mathbb{F}_q \qquad (k \in \mathbb{N})\end{equation} is convergent at the place $\mathfrak{P}|\mathfrak{p}_\infty$ of $V_q$. \end{enumerate} Furthermore, each $v_{-k}$ is a generator of $\Lambda_{q,T^{k} M}$, and the sequence of valuations $\{v_\mathfrak{P}(v_{-k})\}_{k \in \mathbb{N}}$ is eventually strictly increasing with a slope of $q-1$ for all $k \geq K$, where an upper bound for $K$ may be determined by only $q$ and $M$.
\end{prop}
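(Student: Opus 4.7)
My approach is to choose preimages by Newton polygon analysis at $\mathfrak{P}$ combined with an annihilator calculation to guarantee the generator property. The key tool is that $v_\mathfrak{P}(T) = -(q-1)$, so the polynomial $f(x) = x^q + Tx - y$ has Newton polygon at $\mathfrak{P}$ with vertices $(0, v_\mathfrak{P}(y))$, $(1, -(q-1))$, and $(q, 0)$. Whenever $v_\mathfrak{P}(y) > -q$, which holds for $y \in \mathbb{S}_q$ by Lemma \ref{val-1}, the lower convex hull breaks into two segments and yields precisely one preimage of valuation $v_\mathfrak{P}(y) + (q-1)$ alongside $q-1$ preimages of valuation $-1$.

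Existence of a preimage $v_{-k} \in \Lambda_{q, T^k M}$ of $v_{-k+1}$ under $\phi_q + \mu_T$ follows from Proposition \ref{covering}. For the generator property I use the annihilator identity: if $v$ has annihilator $(T^a N')$ with $(N', T) = 1$, then $T \cdot_q v$ has annihilator $(T^{\max(a-1,0)} N')$. Writing $M = T^j N$ with $(N, T) = 1$, and assuming inductively that $v_{-k+1}$ is a generator of $\Lambda_{q, T^{k-1} M}$ with annihilator $(T^{k-1+j} N)$, any preimage $v_{-k}$ with $a \geq 1$ is forced to satisfy $a = k+j$ and $N' = N$, hence to be a generator. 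The only exceptional case is $a = 0$, which can occur only when $k = 1$ and $j = 0$; it corresponds to the single preimage $u' = T^{-1} \cdot_q u \in \Lambda_{q, M}$. I will pick $v_{-1}$ from among the other $q-1$ preimages, which are all generators, whenever the large-valuation root coincides with $u'$.

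Combining these ingredients, for $k \geq 2$ I select $v_{-k}$ as the unique preimage with large valuation $v_\mathfrak{P}(v_{-(k-1)}) + (q-1)$. Then $v_\mathfrak{P}(v_{-k})$ strictly increases with slope $q-1$ and tends to $+\infty$, so $\mathfrak{S}$ converges at $\mathfrak{P}$ by completeness of $V_q$. The main obstacle is the boundary case $k=1, j=0$: the large-valuation preimage may coincide with $u'$, which is not a generator, so we fall back on a preimage of valuation $-1$. This causes at most a one-step delay in the linear growth and bounds $K$ by a universal constant depending only on $q$ and $M$.
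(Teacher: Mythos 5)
Your proposal is correct, and it reaches the result by a genuinely different route than the paper. The paper works top-down: it invokes \cite[Proposition 1.10]{GaRo} to produce, for each $k$, an element of $\Lambda_{q,T^kM}$ of maximal valuation $(k+\deg(M)-1)(q-1)-1$, proves these are generators via the decomposition $\Lambda_{q,T^{k}M}=\Lambda_{q,T^{k+a}}\oplus\Lambda_{q,N}$, then extracts a compatible tower by a pigeonhole (diagonal subsequence) argument on the finitely many possible images in each lower level, and finally corrects the base of the tower by twisting with some $B\in\mathbb{F}_q[T]$ satisfying $w_q^B=u$; the bound on $K$ there comes from $\deg(B)\le\Phi_q(M)$. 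You instead build the tower bottom-up: the Newton polygon of $x^q+Tx-y$ at $\mathfrak{P}$ (two segments precisely because $v_\mathfrak{P}(y)\ge-1>-q$ for $y\in\mathbb{T}_q$) isolates exactly one preimage of valuation $v_\mathfrak{P}(y)+(q-1)$, and your annihilator identity is correct ($T^aN'\mid TB$ with $(N',T)=1$ if and only if $T^{\max(a-1,0)}N'\mid B$), so every preimage of a generator of $\Lambda_{q,T^{k-1}M}$ generates $\Lambda_{q,T^{k}M}$ except in the single base case $k=1$, $T\nmid M$, where the unique non-generator is $u'=T^{-1}\cdot_q u\in\Lambda_{q,M}$ and the other $q-1$ preimages $u'+\lambda$, $\lambda\in\Lambda_{q,T}\setminus\{0\}$, all generate $\Lambda_{q,TM}$. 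Your construction buys three things: it is essentially self-contained (no subsequence extraction and no twist by $B$), it lands directly on the prescribed generator $u$ rather than on some other generator that must be corrected afterwards, and it yields the exact increment $v_\mathfrak{P}(v_{-k})=v_\mathfrak{P}(v_{-k+1})+(q-1)$ for all $k\ge2$, hence $K\le2$, which is sharper than the paper's bound. Two small points to spell out when you write this in full: the large-valuation root has valuation at least $q-2>-1$, so it can never be confused with one of the $q-1$ valuation-$(-1)$ roots and your case analysis at $k=1$ is exhaustive; and the degenerate case $M\in\mathbb{F}_q^\ast$ (so $u=0$ and the polygon collapses) is still covered by the fallback to a valuation-$(-1)$ generator of $\Lambda_{q,T}$.
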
 

\begin{proof} By \cite[Proposition 1.10]{GaRo}, there exists $u_{-k} \in \Lambda_{q,T^{k} M}$ such that \begin{equation} \label{hival} v_\mathfrak{P}(u_{-k}) = (k + \deg(M) - 1)(q-1)-1.\end{equation} Hence the series $\mathfrak{S}$ converges. By \cite[Lemma 1.5]{GaRo}, we obtain for each $k$ that $u_{-k} \notin \Lambda_{q,T^{k-1} M}$. Let $M = T^a N$, where $(N,T) = 1$. Then $T^k M = T^{k+a} N$. By \eqref{primedecomp}, we may write $u_{-k} = u_{-k,T^{k+a}} + u_{-k,N}$ uniquely with $u_{-k,T^{k+a}} \in \Lambda_{q,T^{k+a}} \backslash \Lambda_{q,T^{k+a-1}}$ and $u_{-k,N} \in \Lambda_{q,N}$. Clearly $u_{-k,T^{k+a}}$ is a generator of $\Lambda_{q,T^{k+a}}$. As in \cite[Lemma 1.6]{GaRo}, we obtain $$v_\mathfrak{P}\left({(u_{-k})}_q^{T^{k+a}} \right) =(\left[k + \deg(M) - 1\right] - (k+a))(q-1)-1 = (\deg(N) - 1)(q-1) -1$$ which in turn by \cite[Lemma 1.5]{GaRo} implies that ${(u_{-k})}_q^{T^{k+a}}$ is a generator of $\Lambda_{q,N}$. But \begin{align*} {(u_{-k})}_q^{T^{k+a}}&={( u_{-k,T^{k+a}} + u_{-k,N})}_q^{T^{k+a}} \\&={( u_{-k,T^{k+a}})}_q^{T^{k+a}} + {(u_{-k,N})}_q^{T^{k+a}} \\&= {(u_{-k,N})}_q^{T^{k+a}},\end{align*} which by \cite[Proposition 12.2.21]{Vil} is also a generator of $\Lambda_{q,N}$. Hence with $A \in \mathbb{F}_q[T]$ such that $A T^{k+a} \equiv 1 \pmod{N}$, we obtain again by \cite[Proposition 12.2.21]{Vil} that $$u_{-k,N} = {(u_{-k,N})}_q^{A T^{k+a}} = {({(u_{-k,N})}_q^{T^{k+a}})}_q^A$$ is a generator of $\Lambda_{q,N}$. It follows that $u_{-k}$ is a generator of $\Lambda_{q,T^k M}$, as desired.

For each $k \in \mathbb{N}$, we let $u_{-k}$ be such an element of $\Lambda_{q,T^k M}$. Thus, for each $0 \leq j \leq k$, the element ${(u_{-k})}_q^{T^j}$ is a generator of $\Lambda_{q,T^{k-j} M}$. As $\Lambda_{q,M}$ is finite, we may pass to a subsequence of $\{u_{-k}\}_{k \in \mathbb{N}}$ for which the generator ${(u_{-k})}_q^{T^k}$ of $\Lambda_{q,M}$ remains constant. From that, we may select a subsequence for which the generators ${(u_{-k})}_q^{T^k}$ of $\Lambda_{q,M}$ and ${(u_{-k})}_q^{T^{k-1}}$ of $\Lambda_{q,TM}$ remain constant, and so on inductively, which yields a subequence of generators $\{u_{-k_n}\}_{n \in \mathbb{N}}$ for which $v_\mathfrak{P}(u_{-k_n} - u_{-k_{n+1}}) \geq n(q-1)-1$. Taking $u_{-k}^* =  {(u_{-k_n})}_q^{T^{k_n-k}}$, one easily sees that $u_{-k}^*$ is a generator of $\Lambda_{q,T^k M}$ which satisfies \eqref{hival}, that the series $\mathfrak{S}$ \eqref{seriess} is convergent for the sequence $\{u_{-k}^*\}_{k \in \mathbb{N}}$ and that $u_{-k+1}^* \in (\phi_q + \mu_T)(u_{-k}^*)$ for each $k \in \mathbb{N}$.

By definition of this sequence, $(u_{-k}^*)_q^{T^k} = w$ for all $k \in \mathbb{N}$, where $w$ is a generator of $\Lambda_{q,M}$ which might not be equal to $u$. By \cite[Proposition 12.2.21]{Vil}, there exists $B \in \mathbb{F}_q[T]$ with $(T,B) = 1$ such that ${(w)}_q^B = u$. It follows that one also has $${((u_{-k}^*)_q^{B})}_q^{T^k} = (u_{-k}^*)_q^{T^kB} = {(w)}_q^B = u$$ for each $k \in \mathbb{N}$, that $(u_{-k}^*)_q^{B}$ is a generator of $\Lambda_{q,T^k M}$, and that $$ (\phi_q + \mu_T)((u_{-k}^*)_q^{B}) = (u_{-k}^*)_q^{TB} =  (u_{-k+1}^*)_q^{B}.$$ By \ref{hival} and \cite[Lemma 1.6]{GaRo}, we have when $k$ is large that $$ v_\mathfrak{P}((u_{-k}^*)_q^{B}) = (k + \deg(M) - 1-\deg(B))(q-1)-1.$$ In particular, as $\deg(B)$ is determined by $u$ and $w$, and is thus fixed, it follows that the series $$\mathfrak{S} = \sum_{k \in \mathbb{N}} a_{-k} (u_{-k}^*)_q^{B}$$ is also convergent. We let $v_{-k} = (u_{-k}^*)_q^{B}$ for each $k \in \mathbb{N}$. Also by construction, the valuations $v_\mathfrak{P}(v_{-k})$ are strictly increasing as soon as $k + \deg(M) - 1 - \deg(B) > 0$, a bound which depends only on $M$ and $B$, where $\deg(B) \leq \Phi_q(M)$ for the function field analogue $\Phi_q$ of Euler's $\phi$-function. Hence $K$ can be found as desired. This concludes the proof.
\end{proof} 
We note that while inverses of the exponential maps are not uniquely defined, one can construct a well-defined $\mathbb{F}_q$-vector space spanned by the convergent series of Proposition \ref{carlitzconverge}. We now define the completed Carlitz action on $\mathbb{T}_q$:

\begin{defn}\label{def:completecarlitz} For $M = \sum_{k=-\infty}^n a_k T^k \in (\mathbb{F}_q(T))_\infty$, the \emph{completed Carlitz action} is defined on each $u \in \mathbb{T}_q$ as \begin{equation} M \cdot_q u = u_q^M = \sum_{k=0}^n a_k (\phi_q + \mu_T)^k(u) + V_{q,M}(u),\end{equation}
where $V_{q,M}(u)$ is the  $\mathbb{F}_q$-vector space of convergent infinite series $\sum_{k=1}^{\infty} a_{-k} v_{-k}$ appearing in Proposition \ref{carlitzconverge}.
 \end{defn} 
 
 By definition, the completed Carlitz action on $\mathbb{T}_q$ is consistent with the $\mathbb{F}_q[T]$-action. Note that Proposition \ref{carlitzconverge} has only yet been proven for the roots of $q$-unity $\mathbb{T}_q$. We now extend the completed Carlitz exponential action to all of $\mathbb{S}_q$. 
 
 \begin{prop}\label{comcarlitzsq} The completed Carlitz action defines a map from $\mathbb{S}_q$ into itself.
 \end{prop}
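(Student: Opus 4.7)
The plan is to split the completed action per Definition \ref{def:completecarlitz} into its polynomial summand $\sum_{k=0}^{n} a_k(\phi_q+\mu_T)^k(u)$ and its series summand $V_{q,M}(u)$, show each maps $\mathbb{T}_q$ into $\mathbb{S}_q$, and then extend continuously from $\mathbb{T}_q$ to its completion $\mathbb{S}_q$.

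For the polynomial summand, write $\widetilde{M} := \sum_{k=0}^{n} a_k T^k \in \mathbb{F}_q[T]$, so this summand is $u_q^{\widetilde{M}}$, the ordinary Carlitz action by $\widetilde{M}$, which preserves $\mathbb{T}_q$. To extend it to $\mathbb{S}_q$, I would use the explicit formula $u_q^{\widetilde{M}} = \sum_i \bfrac{\widetilde{M}}{i}_q u^{q^i}$ together with the characteristic-$p$ identity $u^{q^i}-v^{q^i}=(u-v)^{q^i}$, which yields $v_\mathfrak{P}(u^{q^i}-v^{q^i}) = q^i\,v_\mathfrak{P}(u-v)$. Each Frobenius is therefore distance-expanding on close pairs, so $u \mapsto u_q^{\widetilde{M}}$ is uniformly continuous on $V_q$; a continuous map sending $\mathbb{T}_q$ into itself sends the closure $\mathbb{S}_q$ into $\mathbb{S}_q$.

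For the series summand, first note that for $u \in \mathbb{T}_q$ each series $\sum_{k\geq 1} a_{-k} v_{-k}$ of Proposition \ref{carlitzconverge} lies in $\mathbb{S}_q$, as its partial sums are $\mathbb{F}_q$-linear combinations of torsion points and $\mathbb{S}_q$ is complete. To extend the construction to arbitrary $s \in \mathbb{S}_q$, I would take a Cauchy sequence $\{u_n\}\subset\mathbb{T}_q$ with $u_n \to s$, and for each $u_n$ apply Proposition \ref{carlitzconverge} to produce a tower $\{v_{-k}^{(n)}\}_{k \in \mathbb{N}}\subset\mathbb{T}_q$ with $(\phi_q+\mu_T)(v_{-k}^{(n)}) = v_{-k+1}^{(n)}$, $v_0^{(n)} = u_n$, and valuations $v_\mathfrak{P}(v_{-k}^{(n)})$ growing with slope $q-1$ once $k$ exceeds the threshold $K$ from that proposition. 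A diagonal extraction on $n$ — using that each fiber of $\phi_q+\mu_T$ on $\mathbb{T}_q$ is the $q$-element coset of $\Lambda_{q,T}$ — then yields for each $k$ a limit $v_{-k} \in \mathbb{S}_q$ with $v_0 = s$ and $(\phi_q+\mu_T)(v_{-k}) = v_{-k+1}$, and the series $\sum a_{-k} v_{-k}$ converges in $\mathbb{S}_q$ by the uniform valuation growth.

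The principal obstacle is the non-uniqueness of preimages under $\phi_q+\mu_T$, which forces the $v_{-k}^{(n)}$ to be chosen coherently across both $n$ and $k$. Proposition \ref{carlitzconverge}'s uniform lower bound on $v_\mathfrak{P}(v_{-k}^{(n)})$ is precisely what enables the diagonal argument to succeed, since it prevents tails from drifting among the finite fibers of $\phi_q+\mu_T$. Assembling the polynomial and series pieces via the additive structure of $\mathbb{S}_q$ (Proposition \ref{sqvectorspace}) now places $s_q^M \subset \mathbb{S}_q$, establishing the proposition.
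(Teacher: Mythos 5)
Your proposal is correct in outline and follows the same two-part decomposition as the paper: the polynomial summand is handled by continuity of $\sum_k a_k(\phi_q+\mu_T)^k$ (your Frobenius valuation-expansion argument simply makes the paper's one-line ``by continuity'' step explicit), and the series summand by a compactness extraction. Where you genuinely diverge is in \emph{what} you extract. The paper takes arbitrary elements $\xi_m \in V_{q,A_m}(\lambda_m)$ --- that is, the already-summed convergent series attached to each approximant $\lambda_m$ --- passes to a convergent subsequence of the values $\xi_m$, and defines $V_q(\lambda^*)$ as the $\mathbb{F}_q$-span of the resulting limit points. You instead run a diagonal extraction on the tower elements $v_{-k}^{(n)}$ themselves to produce a coherent division tower $\{v_{-k}\}_{k\in\mathbb{N}}$ over the limit $s$, and then re-sum. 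Your version buys more --- an actual $(\phi_q+\mu_T)$-compatible tower above $s$, not merely limit values of sums --- but it also requires one check that the paper's route avoids: convergence of $\sum_{k} a_{-k}v_{-k}$ for the \emph{limiting} tower needs $v_\mathfrak{P}(v_{-k})\to\infty$, and the threshold $K$ of Proposition \ref{carlitzconverge} beyond which the valuations grow with slope $q-1$ is controlled only in terms of the conductor $A_n$ of $u_n$ (via $\deg(B)\leq\Phi_q(A_n)$), which tends to infinity with $n$ whenever $s\notin\mathbb{T}_q$ (Lemma \ref{degreetoinfty}). So the ``uniform valuation growth'' you invoke is not automatic across $n$; you must either verify that for each fixed $k$ the valuations $v_\mathfrak{P}(v_{-k}^{(n)})$ stabilize (non-Archimedean property) to values that still increase without bound in $k$, or fall back on the paper's device of extracting limit points of the sums $\xi_m$, whose convergence is already guaranteed for each fixed $n$. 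With that single point patched, your argument is a valid, and somewhat more informative, proof of the proposition.
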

 
 \begin{proof} We have already defined the Carlitz action on $\mathbb{T}_q$ in Definition \ref{def:completecarlitz}. Suppose now that $\lambda^* \in \mathbb{S}_q \backslash \mathbb{T}_q$ and $\lambda_m \in \mathbb{T}_q$ $(m \in \mathbb{N})$ are such that $\lambda_m \in \Lambda_{q,A_m}$ for each $m \in \mathbb{N}$ and $\lim_{m \rightarrow \infty} \lambda_m = \lambda^*$. Let $M = \sum_{k=-\infty}^n a_k T^k \in (\mathbb{F}_q(T))_\infty$. By continuity, we have \begin{equation}\label{integerpart} \lim_{m \rightarrow \infty} \sum_{k=0}^n a_k (\phi_q + \mu_T)^k(\lambda_m) = \sum_{k=0}^n a_k (\phi_q + \mu_T)^k(\lambda^*).\end{equation} By definition, any sequence of elements $\xi_m \in V_{q,A_m}(\lambda_m)$ ($m \in \mathbb{N}$) has a convergent subsequence. We may therefore define \begin{equation} M \cdot_q \lambda^* = {\lambda^*}_q^M = \sum_{k=0}^n a_k (\phi_q + \mu_T)^k(\lambda^*) + V_{q}(\lambda^*),\end{equation} where $V_{q}(\lambda^*)$ is the (nonempty) $\mathbb{F}_q$-vector space generated by the collection of limit points of such sequences $\{\xi_m\}_{m \in \mathbb{N}}$, for all such sequences $\{\lambda_m\}_{m \in \mathbb{N}} \subset \mathbb{T}_q$ which converge to $\lambda^*$. \end{proof}
 
 \begin{rem} Note that in both Definition \ref{def:completecarlitz} and Proposition \ref{comcarlitzsq}, if $\lambda \in \mathbb{S}_q$ and $M \in ((\mathbb{F}_q[T])_\infty)$, then $M \cdot_q \lambda = 0$ implies that $V_q(\lambda^*)$ (or $V_{q,M}(\lambda)$, if $\lambda \in \mathbb{T}_q$) is a single element.
 \end{rem}

Via Proposition \ref{comcarlitzsq}, we may finally give the following definition.

\begin{defn} The completed Carlitz action given in Definition~\ref{def:completecarlitz} and Proposition \ref{comcarlitzsq} forms the group of \emph{exponential transformations} of $\mathbb{S}_q$. \end{defn}

 We now move to understanding reciprocity and the natural forms associated with our $q$-unit circle $\mathbb{S}_q$.

\subsection{Reciprocity and algebraic closure} 

 In this section, we begin by relating reciprocity via powers to the cyclotomic function fields, which will be a key determinant of the structure of $\mathbb{S}_q$. In order to build the analogy with the classical reciprocity law more fully, we briefly describe the relationship between splitting over $\mathbb{Q}$ and quadratic reciprocity:
 
In classical quadratic reciprocity, we define the residue symbol for a prime integer $p$ and an integer $a$ with $(a,p)=1$ as $$\left(\frac{a}{p}\right)_L \equiv a^{\frac{p-1}{2}} \pmod{p},$$ which assumes the values of $\pm 1$ depending on whether or not there is an integer solution to the congruence $x^2 \equiv a. \pmod{p}$. The classical splitting lemma is then \cite[Proposition I.8.5]{Neu}:

\begin{lem} \label{classicrecip} Suppose that $p$ is an odd prime integer, and that $a \in \mathbb{Z}$ is square-free with $(a,p)=1$. Then $\left(\frac{a}{p}\right)=1$ if, and only if, $p$ splits completely in the number field $\mathbb{Q}(\sqrt{a})$.	
\end{lem}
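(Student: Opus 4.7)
The plan is to deduce the splitting behavior of $p$ in $\mathbb{Q}(\sqrt{a})$ from the factorization of the minimal polynomial $x^2 - a$ modulo $p$, using the Kummer--Dedekind theorem, and then to identify this factorization with the value of the Legendre symbol via Euler's criterion.

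First, I would set $K = \mathbb{Q}(\sqrt{a})$ and let $\mathcal{O}_K$ denote its ring of integers. A standard computation gives $\mathcal{O}_K = \mathbb{Z}[\sqrt{a}]$ when $a \equiv 2,3 \pmod{4}$ and $\mathcal{O}_K = \mathbb{Z}\!\left[\tfrac{1+\sqrt{a}}{2}\right]$ when $a \equiv 1 \pmod{4}$, so in either case the conductor $[\mathcal{O}_K : \mathbb{Z}[\sqrt{a}]] \in \{1,2\}$ is coprime to the odd prime $p$. This is exactly the hypothesis under which the Kummer--Dedekind theorem applies to the order $\mathbb{Z}[\sqrt{a}]$ at $p$: the factorization of $p\mathcal{O}_K$ into prime ideals corresponds bijectively with the factorization of $x^2 - a$ in $\mathbb{F}_p[x]$.

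Next, I would use the hypothesis $(a,p) = 1$ to observe that $x^2 - a$ is separable modulo $p$, so its factorization in $\mathbb{F}_p[x]$ is either a product of two distinct linear factors (whence $p$ splits as a product of two distinct primes in $\mathcal{O}_K$, i.e., splits completely) or an irreducible quadratic (whence $p$ is inert). The splitting case is equivalent to the existence of $b \in \mathbb{Z}$ with $b^2 \equiv a \pmod{p}$, which by Euler's criterion is in turn equivalent to $a^{(p-1)/2} \equiv 1 \pmod{p}$, i.e., $\left(\frac{a}{p}\right)_L = 1$. The inert case corresponds to $\left(\frac{a}{p}\right)_L = -1$, completing the equivalence.

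The main obstacle, and the only genuinely nontrivial point, is the justification that Kummer--Dedekind can be applied to the possibly non-maximal order $\mathbb{Z}[\sqrt{a}]$ rather than directly to $\mathcal{O}_K$; this is precisely what the coprimality of the conductor to $p$ guarantees and explains why the hypothesis of odd $p$ is essential. Everything else is a routine chain of equivalences between quadratic residues modulo $p$, roots of $x^2 - a$ in $\mathbb{F}_p$, and Euler's criterion.
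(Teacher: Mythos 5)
Your proof is correct, and it is essentially the argument the paper defers to by citation: the paper gives no proof of this lemma, pointing instead to Neukirch's Proposition I.8.5, which is precisely the Kummer--Dedekind decomposition result you invoke (factorization of $p\mathcal{O}_K$ mirroring the factorization of $x^2-a$ in $\mathbb{F}_p[x]$), combined with Euler's criterion. Your attention to the conductor of $\mathbb{Z}[\sqrt{a}]$ in $\mathcal{O}_K$ being coprime to the odd prime $p$ is exactly the point that makes the application legitimate, so there is nothing to add.
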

Over $\mathbb{F}_q(T)$, there is a similar reciprocity symbol: Suppose that $P \in \mathbb{F}_q[T]$ is monic and irreducible of degree $r$, and that $A \in \mathbb{F}_q[T]$ is such that $(A,P)=1$. Let $d$ be an integer which divides $q-1$. Then we define $$\left(\frac{A}{P}\right)_d \equiv   A^{\frac{q^r-1}{d}} \pmod{P},$$ which, in analogy to the classical symbol, is equal to a unique $\alpha \in \mathbb{F}_q^*$. Such an $\alpha$ is equal to one precisely when there is a solution $F \in \mathbb{F}_q[T]$ to the congruence $F^d \equiv A \pmod{P}$ \cite[Proposition 3.1]{Ros}. We thus obtain the corresponding splitting lemma:

\begin{lem} \label{AmodPd} Let $A \in \mathbb{F}_q(T)$ and $d \in \mathbb{Z}$ which divides $q-1$. Let $P \in \mathbb{F}_q[T]$ be monic and irreducible of degree $r$ such that $(A,P) = 1$. Then $P$ splits completely in $\mathbb{F}_q(T)(^{d}\sqrt{A})$ if, and only if, $$A^{\frac{q^r-1}{d}} \equiv 1 \pmod{P}.$$ In general, $P$ is unramified in $\mathbb{F}_q(T)(^{d}\sqrt{A})$, and the residue degree of $P$ in $\mathbb{F}_q(T)(^{d}\sqrt{A})$ is equal to the multiplicative order of $$A^{\frac{q^r-1}{d}} =\left(\frac{A}{P}\right)_d \pmod{P}.$$
\end{lem}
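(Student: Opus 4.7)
\bigskip

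\noindent\textbf{Proof plan for Lemma \ref{AmodPd}.} The plan is to recognise $L := \mathbb{F}_q(T)(\sqrt[d]{A})$ as a Kummer extension of $K := \mathbb{F}_q(T)$ and then reduce everything to a computation inside the finite residue field at $P$. Observe first that, since $d \mid q-1$, we have $\gcd(d,p) = 1$, and $\mathbb{F}_q^*$ contains a cyclic subgroup of order $d$, so $K$ contains a primitive $d$-th root of unity. Consequently $L/K$ is Galois of degree dividing $d$, with Galois group a subgroup of $\mu_d$.

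Next I would establish unramification of $P$ in $L/K$. Since $(A,P) = 1$, the element $A$ is a unit in the local ring ${(\mathbb{F}_q[T])}_P$, and the discriminant of $x^d - A$ is (up to sign) $d^d A^{d-1}$. Since $\gcd(d,p) = 1$ and $(A,P) = 1$, this discriminant is a $P$-adic unit, so any factor of $x^d - A$ over $K_P$ yields an unramified local extension; equivalently, the factorisation of $x^d - \bar A$ over the residue field $k_P := \mathbb{F}_q[T]/P \cong \mathbb{F}_{q^r}$ is separable, and Hensel lifts it to a factorisation over $K_P$. In particular, the local degree at any prime of $L$ above $P$ equals the residue degree $f = [\mathbb{F}_{q^r}(\sqrt[d]{\bar A}) : \mathbb{F}_{q^r}]$.

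Now the arithmetic reduces to $\mathbb{F}_{q^r}^*$, which is cyclic of order $q^r-1$, with $d \mid q^r-1$. Write $\bar A = g^k$ for a generator $g$ of $\mathbb{F}_{q^r}^*$. Then $P$ splits completely in $L$ precisely when $f = 1$, i.e.\ when $\bar A$ is a $d$-th power in $\mathbb{F}_{q^r}^*$, which holds if and only if $d \mid k$, if and only if
\[
\bar A^{(q^r-1)/d} = g^{k(q^r-1)/d} = 1 \quad \text{in } \mathbb{F}_{q^r}^*,
\]
i.e.\ $A^{(q^r-1)/d} \equiv 1 \pmod{P}$. This gives the first assertion.

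For the general statement I would compute both sides of the claimed equality explicitly. On the one hand, Kummer theory identifies $f$ with the order of $\bar A$ in $\mathbb{F}_{q^r}^*/(\mathbb{F}_{q^r}^*)^d$, which equals $d/\gcd(d,k)$. On the other hand, the multiplicative order of $\bar A^{(q^r-1)/d} = g^{k(q^r-1)/d}$ in $\mathbb{F}_{q^r}^*$ is
\[
\frac{q^r-1}{\gcd\bigl(q^r-1,\; k(q^r-1)/d\bigr)} = \frac{d}{\gcd(d,k)}.
\]
Both quantities coincide, proving the residue-degree formula and hence identifying the order with the Legendre-type symbol $\left(\frac{A}{P}\right)_d$. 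The main obstacle is this final bookkeeping step: one must pin down the residue degree as \emph{exactly} the order of $A^{(q^r-1)/d}$ and not merely a divisor, but the Kummer-theoretic identification via the cyclic generator $g$ makes this transparent.
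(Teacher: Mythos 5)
Your proof is correct, but it takes a genuinely different route from the one in the paper. The paper disposes of the lemma by citation and reduction: it invokes Rosen's Proposition 10.5, which gives the statement for prime $d = l$ over an arbitrary global function field, then extends to prime powers $d = l^n$ by induction and to general $d \mid q-1$ by linear disjointness of $\mathbb{F}_q(T)(\sqrt[d_1]{A})$ and $\mathbb{F}_q(T)(\sqrt[d_2]{A})$ for coprime $d_1, d_2$. You instead give a self-contained local argument: unramifiedness from the unit discriminant $\pm d^d A^{d-1}$ of $x^d - A$, Hensel lifting to identify the residue degree with $[\mathbb{F}_{q^r}(\sqrt[d]{\bar A}) : \mathbb{F}_{q^r}]$, and then explicit bookkeeping in the cyclic group $\mathbb{F}_{q^r}^*$ showing that both the Kummer-theoretic order of $\bar A$ modulo $d$-th powers and the multiplicative order of $\bar A^{(q^r-1)/d}$ equal $d/\gcd(d,k)$. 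Your computation checks out (in particular $\gcd\bigl(q^r-1,\,k(q^r-1)/d\bigr) = \frac{q^r-1}{d}\gcd(d,k)$ gives the matching order), and the uniformity of the factor degrees of $x^d - \bar A$ is justified because $\mu_d \subset \mathbb{F}_q$ forces all roots to generate the same residue extension. What your approach buys is transparency and independence from the external reference and from the induction/linear-disjointness scaffolding; what the paper's approach buys is brevity and the observation that the prime-degree case already holds over any global function field. Either is acceptable; if you write yours up, cite the standard Kummer-theoretic fact that $[F(\sqrt[d]{a}):F]$ equals the order of $a$ in $F^*/(F^*)^d$ when $\mu_d \subset F$, since that is the one step you use without proof.
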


\begin{proof} By \cite[Proposition 10.5]{Ros}, the Lemma is true whenever $d=l$ is a prime integer over any global function field (not only $\mathbb{F}_q(T)$). The result follows inductively for any prime power $d = l^n$, and then for any such $d$ as in the Lemma by linear disjointness of $\mathbb{F}_q(T)(^{d_1}\sqrt{A})$ and $\mathbb{F}_q(T)(^{d_2}\sqrt{A})$ over $\mathbb{F}_q(T)$ when $(d_1,d_2) = 1$.
\end{proof}

By the classical quadratic reciprocity law $$\left(\frac{p}{q}\right)\left(\frac{q}{p}\right)^{-1} = (-1)^{\frac{p-1}{2}\frac{q-1}{2}}$$ for $p,q \in \mathbb{Z}$ prime with $(p,q)=1$, Lemma \ref{classicrecip} may be used to relate splitting of $p$ in $\mathbb{Q}(\sqrt{q})$ with that of $q$ in $\mathbb{Q}(\sqrt{p})$. One may construct an analogous law for function fields: Suppose that $P,Q \in \mathbb{F}_q[T]$ are monic and irreducible such that $(P,Q)=1$. Then \cite[Theorem 3.3]{Ros} \begin{equation} \label{ffreciprocity} \left(\frac{P}{Q}\right)_d \left(\frac{Q}{P}\right)_d^{-1} = (-1)^{\frac{q-1}{d} \deg(P)\deg(Q)}.\end{equation} 
This is called the \emph{$d$th power reciprocity law} \cite[Chapter 3]{Ros}. By Lemma  \ref{AmodPd}, we may use \eqref{ffreciprocity} to understanding splitting of $P$ in $\mathbb{F}_q(T)(^d\sqrt{Q})$ in terms of that of $Q$ in $\mathbb{F}_q(T)(^d\sqrt{P})$. In particular, when $P$ is linear and $d = q-1$, we obtain: 

\begin{cor} Let $A \in \mathbb{F}_q[T]$. Let $P \in \mathbb{F}_q[T]$ be monic and linear such that $(A,P) = 1$. Then $P$ splits completely in $\mathbb{F}_q(T)(^{q-1}\sqrt{A})$ if, and only if, $$A \equiv 1 \pmod{P}.$$ In general, $P$ is unramified in $\mathbb{F}_q(T)(^{q-1}\sqrt{A})$, and the residue degree of $P$ in $\mathbb{F}_q(T)(^{q-1}\sqrt{A})$ is equal to the multiplicative order of $A \pmod{P}$.
\end{cor}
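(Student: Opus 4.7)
The plan is to derive the corollary as a direct specialization of Lemma \ref{AmodPd}. Setting $d = q - 1$ (which of course divides $q - 1$) and $r = \deg(P) = 1$, the exponent appearing in the $d$th power residue symbol collapses: $(q^r - 1)/d = (q - 1)/(q - 1) = 1$. Hence $A^{(q^r - 1)/d} \equiv A \pmod{P}$, so that the symbol $\left(\frac{A}{P}\right)_{q-1}$ is simply the class of $A$ in the residue field $\mathbb{F}_q[T]/P \cong \mathbb{F}_q$.

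With this simplification in hand, the three assertions of the corollary read off verbatim from Lemma \ref{AmodPd}: complete splitting of $P$ in $\mathbb{F}_q(T)(^{q-1}\sqrt{A})$ is equivalent to $A \equiv 1 \pmod{P}$; the coprimality assumption $(A, P) = 1$ already yields unramifiedness of $P$ in the generality of Lemma \ref{AmodPd}; and the residue degree of $P$ equals the multiplicative order of $A \pmod{P}$ in the cyclic group $(\mathbb{F}_q[T]/P)^* \cong \mathbb{F}_q^*$, which necessarily divides $q - 1$, consistent with the degree of the radical extension.

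The main (and essentially only) point to verify is the numerical collapse of the exponent $(q^r - 1)/d$ to $1$ when $r = 1$ and $d = q - 1$; after that, the corollary is a repackaging of Lemma \ref{AmodPd} in the simplest nontrivial case. In particular, no invocation of the $d$th power reciprocity law \eqref{ffreciprocity} is required for the proof itself, even though the corollary is naturally situated within the reciprocity framework developed above.
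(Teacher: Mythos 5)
Your proposal is correct and matches the paper's (implicit) derivation exactly: the corollary is stated there as the specialization of Lemma~\ref{AmodPd} to $P$ linear and $d=q-1$, where the exponent $(q^r-1)/d$ collapses to $1$ and the symbol becomes the class of $A$ modulo $P$. Your remark that the reciprocity law \eqref{ffreciprocity} plays no logical role in the proof itself is also accurate; it only supplies the surrounding context.
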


We now wish to relate this to the roots of $q$-unity, our atoms for $\mathbb{S}_q$. By basic theory, we can find the recipe for splitting in a cyclotomic function field, which conveniently reverses the roles of $A$ and $P$: 

\begin{lem} \label{PmodA} Let $A \in \mathbb{F}_q[T]$. Let $P \in \mathbb{F}_q[T]$ be monic and irreducible such that $(A,P) = 1$. Then $P$ splits completely in $K_{q,A}$ if, and only if, $$P \equiv 1 \pmod{A}.$$ In general, $P$ is unramified in $K_{q,A}$, and the residue degree of $P$ in $K_{q,A}$ is equal to the multiplicative order of $P \pmod{A}$. \end{lem}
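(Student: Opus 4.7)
The plan is to reduce the statement to a standard Artin–Frobenius calculation in the abelian extension $K_{q,A}/\mathbb{F}_q(T)$, using the Galois isomorphism
$$\text{Gal}(K_{q,A}/\mathbb{F}_q(T)) \simeq (\mathbb{F}_q[T]/A)^*$$
already recorded in the excerpt (from \cite[Theorem 2.3]{Hay1}), under which $B \in (\mathbb{F}_q[T]/A)^*$ corresponds to the automorphism $\sigma_B : \lambda \mapsto \lambda_q^B$ for any generator $\lambda$ of $\Lambda_{q,A}$.

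First, I would show that $P$ is unramified in $K_{q,A}$ whenever $(P,A) = 1$. Write $A = \prod_{i=1}^r P_i^{\alpha_i}$ and $K_{q,A} = \prod_i K_{q,P_i^{\alpha_i}}$. By the explicit Eisenstein–type factorisation of $\Psi_{q,P_i^{\alpha_i}}(u)$ modulo $P_i$ (the same polynomial appearing before Proposition \ref{tq}, with all non-leading coefficients divisible by $P_i$ and constant term $\pm P_i$), the only finite prime that can ramify in $K_{q,P_i^{\alpha_i}}/\mathbb{F}_q(T)$ is $P_i$ itself, so $P \ne P_i$ is unramified in each $K_{q,P_i^{\alpha_i}}$; hence $P$ is unramified in the compositum $K_{q,A}$.

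Second, I would identify the Frobenius at $P$. Fix a prime $\mathfrak{P}$ of $K_{q,A}$ above $P$, and let $\lambda$ be a generator of $\Lambda_{q,A}$, so that the residue field of $\mathfrak{P}$ is generated over $\mathbb{F}_q[T]/P$ by $\overline{\lambda}$. The key congruence I would prove is
$$\lambda_q^P \;\equiv\; \lambda^{q^{\deg P}} \pmod{P},$$
valid for any $\lambda$ in the ring of integers. This follows from writing $P \cdot_q u = \sum_{i=0}^{\deg P} \bfrac{P}{i}_q u^{q^i}$ and showing $P \mid \bfrac{P}{i}_q$ for $0 \le i < \deg P$ while $\bfrac{P}{\deg P}_q = 1$, which is the exact analogue of the divisibility pattern of $\Psi_{q,P}$ used in Proposition \ref{tq}. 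Given this congruence, $\sigma_P$ in the decomposition group acts on $\lambda$ as $\lambda \mapsto \lambda_q^P$, i.e.\ $\sigma_P$ corresponds to the residue class $P \bmod A$ under the Galois isomorphism.

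Third, I would conclude by the standard theory of unramified primes in abelian extensions: the residue degree $f(\mathfrak{P}|P)$ equals the order of the Frobenius in the Galois group, which by the previous step is the multiplicative order of $P \bmod A$ in $(\mathbb{F}_q[T]/A)^*$. In particular $P$ splits completely iff this order is $1$, i.e.\ iff $P \equiv 1 \pmod A$. The main obstacle is verifying the congruence $\lambda_q^P \equiv \lambda^{q^{\deg P}} \pmod P$; everything else is then formal Artin reciprocity applied to the cyclotomic extension $K_{q,A}/\mathbb{F}_q(T)$.
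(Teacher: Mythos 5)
Your proof is correct. The paper itself gives no argument here --- it simply cites \cite[Corollary 2.5]{Hay1} --- and your sketch is precisely the standard proof behind that citation: unramifiedness of $P\nmid A$ from the Eisenstein shape of $\Psi_{q,P_i^{\alpha_i}}$, identification of the Frobenius at $P$ with $\sigma_P$ via the congruence $\lambda_q^P\equiv\lambda^{q^{\deg P}}\pmod{P}$ (the same divisibility $P\mid\bfrac{P}{i}_q$ for $i<\deg P$ that the paper already uses in Proposition \ref{tq}), and then the formal order-of-Frobenius argument.
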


\begin{proof} \cite[Corollary 2.5]{Hay1}. \end{proof}

\begin{rem} We note that when the polynomial $Q$ of \eqref{ffreciprocity} is linear, we have $u_q^Q = u^q + Qu$ and $K_{q,Q} = \mathbb{F}_q(T)(^{q-1}\sqrt{-Q})$, which differs from $\mathbb{F}_q(T)(^{q-1}\sqrt{Q})$ only by an element of $\mathbb{F}_{q^2}^*$. \end{rem}

Via the Carlitz action, it is possible to show that the field $V_q$ is algebraically closed relative to the reciprocity law.

\begin{lem} If $M \in (\mathbb{F}_q(T))_\infty$, then the equation $X^{q-1}  - M = 0$ has all of its roots in $V_q$.  \end{lem}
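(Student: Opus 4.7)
The strategy is Hensel-lifting inside the complete local field $V_q$, together with the fact that $V_q$ already contains all $(q-1)$-th roots of unity, namely $\mu_{q-1}=\mathbb{F}_q^{*}$. From the proof of Theorem~\ref{vsq-1}, $V_q$ is a totally and tamely ramified extension of $(\mathbb{F}_q(T))_\infty$ of degree $q-1$ with residue field $\mathbb{F}_q$. A uniformizer can be taken to be $\pi = 1/\lambda$ for any nonzero $\lambda \in \Lambda_{q,T}\subset\mathbb{T}_q\subset\mathbb{S}_q\subset V_q$: since $u_q^T=u^q+Tu$, such a $\lambda$ satisfies $\lambda^{q-1}=-T$, so $\pi^{q-1}=-1/T$. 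In particular, $-T=\pi^{-(q-1)}$ is already a $(q-1)$-th power in $V_q^{*}$.

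Given $M\in(\mathbb{F}_q(T))_\infty^{*}$, set $m=-v_\infty(M)$ and write $M=(-T)^m\,w$ with $w\in\mathcal{O}_{(\mathbb{F}_q(T))_\infty}^{*}$. Since $(-T)^m=\pi^{-m(q-1)}=(\pi^{-m})^{q-1}$, finding a $(q-1)$-th root of $M$ in $V_q$ reduces to finding one of $w$. Multiplying $w$ by an appropriate element of $\mathbb{F}_q^{*}\subset V_q$ changes a prospective root only by an element of $\mu_{q-1}\subset V_q$, so one may normalize to the case $w\equiv 1\pmod{\pi}$ in the residue field.

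At this stage I would apply the generalized Hensel lemma (Lemma~\ref{Henselgeneral}) to $f(Y)=Y^{q-1}-w$ at $\alpha=1$: one has $f(1)=1-w\equiv 0\pmod{\pi}$, while $f'(1)=q-1\equiv -1\not\equiv 0\pmod{\pi}$ in $\mathbb{F}_q$, which gives a unique $Y\in\mathcal{O}_{V_q}$ with $Y^{q-1}=w$. Unwinding the reductions produces a root $X_0\in V_q$ of $X^{q-1}-M=0$, and then the remaining $q-2$ roots are $\zeta X_0$ for $\zeta\in\mu_{q-1}=\mathbb{F}_q^{*}\subset V_q$, so all roots lie in $V_q$.

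The main delicate step is the normalization of the leading coefficient. Because the $(q-1)$-th power map on $\mathbb{F}_q^{*}$ is trivial (every element raised to $q-1$ gives $1$), one cannot freely absorb arbitrary constants; the sign $(-1)^m$ produced by the factor $(-T)^m=\pi^{-m(q-1)}$ must be combined with the leading coefficient of $M$ and the presence of $\mathbb{F}_q^{*}\subset V_q$ to arrive at a unit congruent to $1$ modulo $\pi$. Once this matching is in place, the Hensel lift is routine, and the $\mu_{q-1}$-translation finishes the argument.
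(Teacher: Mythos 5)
Your overall route --- pull the power of $-T$ out of $M$ using $\lambda^{q-1}=-T$ for $\lambda\in\Lambda_{q,T}\setminus\{0\}$, Hensel-lift the remaining unit, and then distribute the other roots by $\mu_{q-1}=\mathbb{F}_q^{*}$ --- is exactly the paper's, and you have correctly isolated the one step that is genuinely delicate. Unfortunately that step is a real gap, not a routine matching. Your normalization claim is backwards: if $Y^{q-1}=w$ and you replace $Y$ by $\zeta Y$ with $\zeta\in\mathbb{F}_q^{*}$, then $(\zeta Y)^{q-1}=\zeta^{q-1}w=w$, so rescaling a prospective root by $\mu_{q-1}$ does not change $w$ at all; dually, you cannot absorb a factor $\alpha\in\mathbb{F}_q^{*}$ into the root unless you already possess a $(q-1)$st root of $\alpha$ in $V_q$, which is precisely what is in question. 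Concretely, $\mathcal{O}_{V_q}^{*}\cong\mathbb{F}_q^{*}\times(1+\mathfrak{P})$, and the $(q-1)$st power map is surjective on $1+\mathfrak{P}$ (since $(q-1,p)=1$) but kills $\mathbb{F}_q^{*}$, so the $(q-1)$st powers of units are exactly the principal units. Hence the residue $\overline{w}\in\mathbb{F}_q^{*}$ is an invariant obstruction: Hensel's lemma requires a solution of $\eta^{q-1}=\overline{w}$ in the residue field of $V_q$, which is $\mathbb{F}_q$ because $V_q/(\mathbb{F}_q(T))_\infty$ is totally ramified, and such $\eta$ exists only when $\overline{w}=1$.

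This is not a defect you can repair, because the statement itself fails for $q>2$. Take $q=3$ and $M=-1$: the roots of $X^{2}+1$ generate the unramified quadratic extension $\mathbb{F}_9((1/T))$ of $(\mathbb{F}_3(T))_\infty$, which cannot sit inside the totally ramified $V_3=(\mathbb{F}_3(T))_\infty(\sqrt{-T})$. Explicitly, $(a+b\sqrt{-T})^{2}=-1$ forces $2ab=0$, and then either $a^{2}=-1$ in $\mathbb{F}_3((1/T))$ (impossible in the residue field) or $b^{2}=1/T$ (impossible by parity of the valuation). More generally $V_q^{*}/(V_q^{*})^{q-1}\cong\mathbb{Z}/(q-1)\times\mathbb{F}_q^{*}$, so $X^{q-1}-M$ splits in $V_q$ only for those $M$ whose unit part, after dividing out the appropriate power of $-T$, is congruent to $1$ modulo $\mathfrak{P}$. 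You should know that the paper's own proof commits the identical error: it asserts without justification that the reduced equation $X^{q-1}-\overline{(-T)^{-m}M}\equiv 0$ has a solution $\eta\in\vartheta_{\mathfrak{p}_\infty}/\mathfrak{p}_\infty\cong\mathbb{F}_q^{*}$, which is false unless that residue equals $1$. So you have faithfully reproduced, and to your credit explicitly flagged, the precise point at which both the argument and the lemma break down; the honest conclusion is that the lemma needs a hypothesis on the leading coefficient (or the target field must be enlarged by the relevant constants) rather than a cleverer normalization.
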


\begin{proof} As $\mu_{q-1} \subset (\mathbb{F}_q(T))_\infty$, it suffices to show that the equation $X^{q-1}  - M = 0$ has at least one root in $V_q$. By definition, $u_q^T = u^q + Tu$, and $\Lambda_{q,T} \subset V_q$. Any $\lambda \in \Lambda_{q,T}$ is a solution to the $$\lambda_q^T = \lambda^q + T\lambda = 0,$$ so that $\lambda^{q-1} = -T$. Let $\mathfrak{P}$ denote the place of $V_q$ above $\mathfrak{p}_\infty$. As $e(\mathfrak{P}|\mathfrak{p}_\infty) = q-1$, it follows that $$(q-1)v_\mathfrak{P}(\lambda) =v_\mathfrak{P}(\lambda^{q-1}) = v_\mathfrak{P}(-T) = v_\mathfrak{P}(T) = (q-1)v_{\mathfrak{p}_\infty} = q-1,$$ hence $v_\mathfrak{P}(\lambda) = 1$. Let $m = v_{\mathfrak{p}_\infty}(M)$, so that $v_{\mathfrak{p}_\infty}((-T)^{-m} M) = 0$, whence $(-T)^{-m} M \in \vartheta_{\mathfrak{p}_\infty}$, where $\vartheta_{\mathfrak{p}_\infty}$ denotes the valuation ring of $\mathbb{F}_q(T)$ at $\mathfrak{p}_\infty$. The reduced equation $$X^{q-1} - \overline{(-T)^{-m} M} \equiv 0 \pmod{\mathfrak{p}_\infty}$$ has a solution $\eta \in \vartheta_{\mathfrak{p}_\infty}/\mathfrak{p}_\infty \cong \mathbb{F}_q^*$. By the generalised Hensel's lemma (Lemma \ref{Henselgeneral}), there exists a unique $u \in  \vartheta_{\mathfrak{p}_\infty}$ such that $u \equiv \eta \pmod{\mathfrak{p}_\infty}$ and $u^{q-1} - (-T)^{-m} M = 0$. It follows that the element $v = \lambda^m u$ satisfies $$v^{q-1} - M = (\lambda^m u)^{q-1} - M = (-T)^m u^{q-1} - M = (-T)^m (u^{q-1} - (-T)^{-m} M) = 0.$$ The result follows. \end{proof}

In classical arithmetic, the product of conjugates of roots of unity lie in $\mathbb{R}$, i.e., if $z \in S^1$, then $z$ is a solution to the quadratic form $Q(x) =x^2 - 2\Re(z)x + |z|\in \mathbb{R}[x]$. We now prove the analogous result for $\mathbb{S}_q$.

\begin{lem} \label{Kummerconverse} Let $u \in \mathbb{S}_q$. Then there exists $M \in (\mathbb{F}_q(T))_\infty$ such that $u^{q-1} - M = 0$. \end{lem}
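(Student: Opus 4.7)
The plan is to show $u^{q-1} \in (\mathbb{F}_q(T))_\infty$ by a Galois-theoretic argument followed by a continuity step. The key structural fact, already in place from the discussion preceding Theorem \ref{vsq-1}, is that $V_q/(\mathbb{F}_q(T))_\infty$ is a tamely totally ramified Galois extension of degree $q-1$ with cyclic Galois group (it is obtained by adjoining $\sqrt[q-1]{-1/T}$). Under the Carlitz identification $\text{Gal}(K_{q,M}/\mathbb{F}_q(T)) \cong (\mathbb{F}_q[T]/M)^*$, the decomposition group at $\mathfrak{P}$ (which equals the inertia group, since $f(\mathfrak{P}|\mathfrak{p}_\infty)=1$) corresponds to the subgroup of constants $\mathbb{F}_q^* \subset (\mathbb{F}_q[T]/M)^*$; this is essentially the content of the Newton polygon analysis behind Lemma \ref{Pninf}, applied at infinity.

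I would first treat the case $u \in \mathbb{T}_q$. Say $u \in \Lambda_{q,M}$, and let $\sigma \in \text{Gal}(V_q/(\mathbb{F}_q(T))_\infty)$. Restriction of $\sigma$ to $(K_{q,M})_\mathfrak{P}$ extends to an automorphism of $K_{q,M}/\mathbb{F}_q(T)$ lying in the decomposition group at $\mathfrak{P}$, so it coincides with some $\sigma_a$ for $a \in \mathbb{F}_q^*$. Because the Carlitz action by scalars is ordinary multiplication ($v_q^a = av$) and because the Carlitz polynomial action is $\mathbb{F}_q$-linear on $\overline{\mathbb{F}_q(T)}$ (both $\phi_q$ and $\mu_T$ being $\mathbb{F}_q$-linear), it follows that $\sigma(u) = a u$ for every $u \in \Lambda_{q,M}$. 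Hence $\sigma(u^{q-1}) = a^{q-1} u^{q-1} = u^{q-1}$, since $a^{q-1}=1$ in $\mathbb{F}_q^*$. As $\sigma$ was arbitrary, $u^{q-1}$ lies in the fixed field $(\mathbb{F}_q(T))_\infty$.

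The extension from $\mathbb{T}_q$ to $\mathbb{S}_q$ is then pure continuity. Given $u \in \mathbb{S}_q$, choose a sequence $\{u_n\}_{n \in \mathbb{N}} \subset \mathbb{T}_q$ with $u_n \to u$ at $\mathfrak{P}$. Then $u_n^{q-1} \to u^{q-1}$ by continuity of $x \mapsto x^{q-1}$, each $u_n^{q-1} \in (\mathbb{F}_q(T))_\infty$ by the previous paragraph, and $(\mathbb{F}_q(T))_\infty$ is complete, hence closed in $V_q$. Therefore $u^{q-1}$ remains in $(\mathbb{F}_q(T))_\infty$, and one takes this value as the required $M$. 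The one genuinely delicate point in the whole argument is the precise identification of the inertia subgroup at $\mathfrak{P}$ with the constants $\mathbb{F}_q^* \subset (\mathbb{F}_q[T]/M)^*$; once that is granted, the scalarity $\sigma(u) = au$ and the killing of $a$ upon raising to the $(q-1)$-st power follow immediately.
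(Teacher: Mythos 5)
Your argument is correct, but it reaches the conclusion by a genuinely different route than the paper. The paper's proof is computational: substituting $v=u^{q-1}$ into $u_q^A/u$ produces the polynomial $\Xi_{q,A}(v)$ of \eqref{xi}, and the Newton polygon \eqref{Newtonpoly} shows that the factor attached to each segment has degree $q^{i-1}$, which is coprime to $q-1=[V_q:(\mathbb{F}_q(T))_\infty]$; hence every root $\nu$ already lies in $(\mathbb{F}_q(T))_\infty$. You instead argue by Galois descent: $\mathrm{Gal}(V_q/(\mathbb{F}_q(T))_\infty)$ is the decomposition group (equal to the inertia group, as $f(\mathfrak{P}|\mathfrak{p}_\infty)=1$) at infinity, which under $\mathrm{Gal}(K_{q,M}/\mathbb{F}_q(T))\cong(\mathbb{F}_q[T]/M)^*$ is the constant subgroup $\mathbb{F}_q^*$, and since $\sigma_a$ acts on $\Lambda_{q,M}$ by the scalar $a$ (the Carlitz action by constants being ordinary multiplication), the power $u^{q-1}$ is fixed by the whole local Galois group. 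The one fact you rightly flag as delicate---that the inertia group at $\mathfrak{p}_\infty$ is exactly $\mathbb{F}_q^*$---is precisely the content of Hayes's Theorem 3.2, the same reference the paper cites for Lemma \ref{Pninf}, so invoking it is legitimate and not circular. The completion steps are essentially identical (continuity of $x\mapsto x^{q-1}$ together with closedness of the complete subfield $(\mathbb{F}_q(T))_\infty$ in $V_q$). What the paper's computation buys that yours does not is explicit data: the identification of $\kappa(\mathbb{T}_q)$ with the roots of the polynomials $\Xi_{q,A}$ and the slope information from \eqref{Newtonpoly}, both of which are reused later (in the corollary describing $\kappa(\mathbb{S}_q)$ and in the Dirichlet approximation theorem). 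What your argument buys is brevity and conceptual clarity---it makes transparent that the statement is nothing but invariance under the inertia action---at the cost of producing only the existence of $M$ rather than a polynomial whose roots realize it.
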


\begin{proof} Suppose that $A \in \mathbb{F}_q[T]$ is of degree $r$, and let $\lambda \in \Lambda_{q,A}\backslash\{0\}$. The element $\lambda$ is a root of \begin{equation} \label{xi} \frac{u_q^A}{u} = \sum_{i=0}^r \bfrac{A}{i} u^{q^i-1} = \sum_{i=0}^r \bfrac{A}{i} v^{\frac{q^i-1}{q-1}}:=\Xi_{q,A}(v),\end{equation} where $v = u^{q-1}$. As $\deg(\bfrac{A}{i}) = (r-i)q^i$ for each $i=0,\ldots,r$, the Newton polygon for $\Xi_{q,A}(v)$ has vertices contained in \begin{equation}\label{Newtonpoly} \mathcal{N}(\Xi_{q,A})= \left\{\left(\frac{q^i-1}{q-1},-(r-i)q^i\right)\right\}_{i=0}^r,\end{equation} and each of these points lies on the lower convex envelope of this set. It follows that this Newton polygon is equal to the set of line segments connecting adjacent points in $\mathcal{N}(\Xi_{q,A})$. If $\nu$ is a root of $\Xi_{q,A}(v)$, then $v_\mathfrak{P}(\nu)$ is equal to one of the slopes of the Newton polygon of $\Xi_{q,A}(v)$. Hence, there exists $i \in \{1,\ldots,r\}$ such that $$v_\mathfrak{P}(\nu) = (r-i)(q-1) - 1,$$ where $\mathfrak{P}$ is the place of $V_q$ above $\mathfrak{p}_\infty$. As $\nu \in V_q$ and $\dim(V_q/(\mathbb{F}_q(T))_\infty) = q-1$, the minimal polynomial of $\nu$ over $(\mathbb{F}_q(T))_\infty$ has degree $d \mid (q-1)$. But the factor of $\Xi_{q,A}(v)$ corresponding to the segment of the Newton polygon for $\nu$ has degree $\frac{q^{i}-1}{q-1} - \frac{q^{i-1}-1}{q-1} = q^{i-1}$. Thus, $d \mid (q-1,q^{i-1}) = 1$ and $\nu \in (\mathbb{F}_q(T))_\infty$. As $v = u^{q-1}$, there exists a root $\nu$ of $\Xi_{q,A}(v)$ such that $\lambda^{q-1} = \nu$. Thus $\lambda$ is a root of the Kummer equation $X^{q-1} - \nu = 0$, so that the Lemma holds within $\mathbb{T}_q$ by setting $\nu = M$.   

For the completion step, suppose that $\lambda^* \in \mathbb{S}_q$, and let $\{\lambda_n\}_{n \in \mathbb{N}}\subset \mathbb{T}_q$ such that $\lim_{n \rightarrow \infty} \lambda_n = \lambda^*$. By the first part of the proof, for each $n \in \mathbb{N}$, there exists $M_n \in (\mathbb{F}_q(T))_\infty$ such that $\lambda_n^{q-1} = M_n$. As the sequence $\{\lambda_n\}_{n \in \mathbb{N}}$ is Cauchy in $\mathfrak{P} \mid \mathfrak{p}_\infty$, it follows that $\{M_n\}_{n \in \mathbb{N}}$ is Cauchy in $\mathfrak{p}_\infty$. By completeness of $(\mathbb{F}_q(T))_\infty$, the limit $M^* = \lim_{n \rightarrow \infty} M_n$ exists, and $$\left(\lambda^*\right)^{q-1} = \left(\lim_{n \rightarrow \infty} \lambda_n\right)^{q-1}= \lim_{n \rightarrow \infty} \left(\lambda_n^{q-1}\right) = \lim_{n \rightarrow \infty} M_n = M^*,$$ concluding the proof.
\end{proof}

Hence, $V_q$ is the algebraic closure of $(\mathbb{F}_q(T))_\infty$ with respect to Kummer equations (reciprocity), and every element of the $q$-unit circle $\mathbb{S}_q$ corresponds to a residue via the norm map $$N_{V_q/(\mathbb{F}_q(T))_\infty}(u) = \prod_{i=1}^q \zeta^i u,$$ with $\zeta \in \mathbb{F}_q^*$ a primitive $(q-1)$st root of unity. In order words, the Carlitz torsion points and power reciprocity map coincide within $V_q$! 

An essential result in classical cyclotomic theory is:

\begin{lem} Let $\alpha$ be an algebraic integer, all of whose conjugates over $\mathbb{Q}$ lie on $S^1$. Then $\alpha \in \mathbb{M}$.
\end{lem}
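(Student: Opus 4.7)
The plan is to apply Kronecker's classical argument. Let $\alpha$ be an algebraic integer of degree $n$ over $\mathbb{Q}$, with Galois conjugates $\alpha_1 = \alpha, \alpha_2, \ldots, \alpha_n$ all lying on $S^1$. The strategy is to show that the sequence $\{\alpha^k\}_{k \in \mathbb{N}}$ takes only finitely many values in $\mathbb{C}$, so that two terms must coincide, forcing $\alpha^{k-j} = 1$ for some $j < k$.

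First, I would observe that for every positive integer $k$, the power $\alpha^k$ is again an algebraic integer, and its set of conjugates over $\mathbb{Q}$ is contained in $\{\alpha_1^k, \ldots, \alpha_n^k\}$ (because the Galois action on the algebraic closure commutes with integer exponentiation). In particular, every conjugate of $\alpha^k$ lies on $S^1$, since $|z|=1$ implies $|z^k|=1$.

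Second, I would bound the minimal polynomial $P_k(x) \in \mathbb{Z}[x]$ of $\alpha^k$. Its degree is at most $n$, and its coefficients are, up to sign, elementary symmetric functions in its conjugates, each of absolute value one. Hence the coefficient of $x^{n-j}$ in $P_k$ is an integer of absolute value at most $\binom{n}{j}$. Only finitely many integer tuples satisfy such bounds, so the collection $\{P_k : k \geq 1\}$ is finite.

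Third, since each $\alpha^k$ is a root of its own $P_k$, and there are only finitely many such polynomials, each with at most $n$ roots, the set $\{\alpha^k : k \geq 1\} \subset \mathbb{C}$ is finite. By the pigeonhole principle, there exist positive integers $j < k$ with $\alpha^j = \alpha^k$; since $|\alpha|=1$ forces $\alpha \neq 0$, we may divide to obtain $\alpha^{k-j}=1$, so $\alpha \in \mathbb{M}$.

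There is no real obstacle: the argument is a rigidity phenomenon arising from the integrality of the coefficients of $P_k$ combined with the boundedness imposed by $S^1$. The only subtlety worth verifying is that the conjugates of $\alpha^k$ do lie among the $\alpha_i^k$, justifying the uniform coefficient bound across $k$; this follows at once from basic Galois theory.
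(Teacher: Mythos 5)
Your argument is correct and is precisely the classical Kronecker proof; the paper itself does not write out a proof but simply cites Washington's \emph{Introduction to Cyclotomic Fields}, Lemma 1.6, whose proof is this same argument (bounded degree and binomially bounded integer coefficients for the minimal polynomials of the powers $\alpha^k$, hence finitely many powers, hence $\alpha^j=\alpha^k$). So you have essentially reproduced the cited proof in full.
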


\begin{proof}\cite[Lemma 1.6]{Wash}.
\end{proof}

In particular, this allows one to study the structure of conjugates of elements of $S^1$ which are not roots of unity, as we know that such elements have at least one conjugate which does not lie on $S^1$. We now show that $\mathbb{S}_q$ possesses a similar property relative to its set of roots of $q$-unity $\mathbb{T}_q$.

\begin{lem}\label{sqtq}
Suppose that $\lambda \in \mathbb{S}_q$ is integral over $\mathbb{F}_q[T]$ and all conjugates of $\lambda$ over $\mathbb{F}_q(T)$ are contained in $\mathbb{S}_q$. Then $\lambda \in \mathbb{T}_q$. 
\end{lem}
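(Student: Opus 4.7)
The plan is to mimic the classical Kronecker argument (every algebraic integer all of whose conjugates lie on $S^1$ is a root of unity) by replacing powers $\alpha^n$ with the Carlitz action $\lambda_q^M$ for $M \in \mathbb{F}_q[T]$. The key observation is that the Carlitz action by a polynomial in $\mathbb{F}_q[T]$ commutes with the Galois action of $\mathrm{Gal}(\overline{\mathbb{F}_q(T)}/\mathbb{F}_q(T))$ (since its defining coefficients lie in $\mathbb{F}_q[T]$), so the Galois conjugates of $\lambda_q^M$ are exactly $(\sigma\lambda)_q^M$ where $\sigma\lambda$ ranges over the conjugates of $\lambda$.

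With that setup I would proceed as follows. Fix an embedding of $\overline{\mathbb{F}_q(T)}$ into $(\overline{\mathbb{F}_q(T)})_\mathfrak{P}$ compatible with the definition of $\mathbb{S}_q$, and let $\lambda_1=\lambda,\lambda_2,\ldots,\lambda_n$ be the conjugates of $\lambda$, all lying in $\mathbb{S}_q$ by hypothesis. For each $M \in \mathbb{F}_q[T]$, set $\mu_M := \lambda_q^M$. Since $\lambda$ is integral over $\mathbb{F}_q[T]$ and the Carlitz action is given by a polynomial with coefficients in $\mathbb{F}_q[T]$, $\mu_M$ is integral. Its conjugates are $(\lambda_i)_q^M$, which by Proposition \ref{covering} all lie in $\mathbb{S}_q$, hence by Lemma \ref{val-1} satisfy $v_\mathfrak{P}((\lambda_i)_q^M) \geq -1$.

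The main step is to translate this uniform valuation bound into a uniform degree bound on the minimal polynomial of $\mu_M$. Let $f_M(X) = \prod_{i=1}^{d_M}(X - \nu_{M,i}) \in \mathbb{F}_q[T][X]$ be the minimal polynomial of $\mu_M$ over $\mathbb{F}_q(T)$, with $d_M \leq n$. Its $k$-th coefficient is (up to sign) the elementary symmetric polynomial $e_k(\nu_{M,i})$, so by the non-Archimedean inequality
\begin{equation*}
v_\mathfrak{P}(e_k(\nu_{M,i})) \;\geq\; k \cdot \min_i v_\mathfrak{P}(\nu_{M,i}) \;\geq\; -k \;\geq\; -n.
\end{equation*}
Because $e_k(\nu_{M,i}) \in \mathbb{F}_q(T)$ and $e(\mathfrak{P}|\mathfrak{p}_\infty)=q-1$, this gives $v_{\mathfrak{p}_\infty}(e_k) \geq -n/(q-1)$, so the coefficient, being in $\mathbb{F}_q[T]$, is a polynomial in $T$ of degree at most $n/(q-1)$.

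Since $\mathbb{F}_q$ is finite, there are only finitely many polynomials in $\mathbb{F}_q[T]$ of degree at most $n/(q-1)$, and hence only finitely many possible $f_M$; together with $d_M \leq n$ this produces only finitely many possible values of $\mu_M = \lambda_q^M$ as $M$ ranges over $\mathbb{F}_q[T]$. By pigeonhole there exist $M_1 \neq M_2$ in $\mathbb{F}_q[T]$ with $\lambda_q^{M_1}=\lambda_q^{M_2}$; by additivity of the Carlitz action this gives $\lambda_q^{M_1-M_2}=0$ with $M_1-M_2 \neq 0$, so $\lambda \in \Lambda_{q,M_1-M_2} \subset \mathbb{T}_q$. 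The main obstacle is precisely the degree-bounding step; every other ingredient (Galois compatibility of the Carlitz action, preservation of integrality, the lower valuation bound of Lemma \ref{val-1}) is essentially immediate once assembled.
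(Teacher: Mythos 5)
Your proposal is correct and follows essentially the same route as the paper: commute the Carlitz action with Galois, use the uniform lower bound $v_\mathfrak{P}\geq -1$ on $\mathbb{S}_q$ to bound the valuations of the elementary symmetric functions, deduce a uniform degree bound on the coefficients of the (minimal) polynomial of $\lambda_q^M$, and apply pigeonhole plus additivity to get $\lambda_q^{M_1-M_2}=0$. The only cosmetic difference is that you work with the minimal polynomial of $\lambda_q^M$ and invoke Lemma \ref{val-1} directly, whereas the paper works with the full conjugate product $F_M(X)=\prod_i(X-(\lambda_i)_q^M)$ and re-derives the valuation constraint from the Galovich--Rosen lemma; both are equivalent.
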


\begin{proof} If $\sigma$ is in the Galois group of $\lambda$ over $\mathbb{F}_q(T)$ and $M \in \mathbb{F}_q[T]$, then $\lambda_q^M \in \mathbb{F}_q[T,\lambda]$ and \begin{equation} \label{Galoiscommute} \sigma\left( \lambda_q^M\right) = \sigma\left( \sum_{i=0}^{\deg(M)} \bfrac{M}{i} \lambda^{q^i}\right) = \sum_{i=0}^{\deg(M)} \bfrac{M}{i} \sigma(\lambda)^{q^i},\end{equation} so that $\sigma$ commutes with the Carlitz action. As $\mathbb{T}_q$ lies in the separable closure of $\mathbb{F}_q(T)$, $\lambda$ is separable over $\mathbb{F}_q(T)$. Let $$F(X) = \prod_{i=1}^r (X - \lambda_i) \in \mathbb{F}_q[T][X]\qquad (\lambda = \lambda_1)$$ be the minimal polynomial of $\lambda$ over $\mathbb{F}_q(T)$. By \eqref{Galoiscommute}, the elements $(\lambda_1)_q^M, \ldots,$ $(\lambda_r)_q^M$ are the conjugates of $\lambda_q^M$. As $\lambda_1,\ldots,\lambda_r \in \mathbb{S}_q$, it follows by Proposition \ref{sqvectorspace} that $(\lambda_1)_q^M, \ldots, (\lambda_r)_q^M \in \mathbb{S}_q$ for all $M \in \mathbb{F}_q[T]$. The polynomial $F_M(X) = \prod_{i=1}^r (X - (\lambda_i)_q^M)$ has $\lambda_q^M$ as a root, and as $\lambda_q^M$ is integral over $\mathbb{F}_q[T]$, $F_M(X) \in \mathbb{F}_q[T][X]$. Let $\mathfrak{P}$ be the place of $\mathbb{S}_q$ above $\mathfrak{p}_\infty$. Let $\{u_{1,n} \}_{n \in \mathbb{N}} \subset \mathbb{T}_q$ such that $\lim_{n \rightarrow \infty} u_{1,n}  = \lambda \; (=\lambda_1)$. By \cite[Lemma 1.5]{GaRo}, there exists a nonnegative integer $k_{1,n} $ such that $v_\mathfrak{P}(u_{1,n} ) = k_{1,n} (q-1) - 1$. By the non-Archimedean property, it follows that there exists a nonnegative integer $k_1$ such that $v_\mathfrak{P}(\lambda_1) = k_1(q-1) - 1$. Let $M \in \mathbb{F}_q[T]$. By definition of the Carlitz action, we then obtain $\lim_{n \rightarrow \infty} (u_{1,n})_q^M = \lambda_1^M$. As $(u_{1,n})_q^M \in \mathbb{T}_q$ for all $n \in \mathbb{N}$, there exists a nonnegative integer $k_{1,n,M}$ such that $v_\mathfrak{P}((u_{1,n})_q^M) = k_{1,n,M} (q-1) - 1$, and again by the non-Archimedean property, there exists a nonnegative integer $k_{1,M}$ such that $v_\mathfrak{P}((\lambda_{1})_q^M) = k_{1,M} (q-1) - 1$. The same argument holds for $\lambda_2,\ldots,\lambda_r$ in place of $\lambda_1 = \lambda$, so that \begin{equation} \label{valuation} v_\mathfrak{P}((\lambda_{i})_q^M) = k_{i,M} (q-1) - 1\qquad (k_{i,M} \geq 0,\; i=1,\ldots,r).\end{equation}
Let $$F_M(X) = X^r + a_{r-1,M} X^{r-1} + \cdots + a_{1,M} X + a_{0,M},$$ and let $E_j(X_1,\ldots,X_r)$ denote the $j$th symmetric elementary function. By definition of $F_M(X)$, we have $a_{r-j,M} = E_j((\lambda_{1})_q^M,\ldots,(\lambda_{r})_q^M))$. As $F_M(X) \in \mathbb{F}_q[T][X]$, we also have $v_\mathfrak{P}(a_{r-j,M}) \leq 0$ for each $j=0,\ldots,r$. We thus obtain for each $j=0,\ldots,r$ that \begin{align} 0 &\notag \geq v_\mathfrak{P}(a_{r - j,M}) \\\label{valbound}&= v_\mathfrak{P}(E_{j}((\lambda_{1})_q^M,\ldots,(\lambda_{r})_q^M)))\\\notag& \geq \min_{i_1,\ldots,i_{j}} v_\mathfrak{P}\left(\prod_{t=1}^{j} (\lambda_{i_t})_q^M\right) \\\notag& \geq -j.\end{align} As $a_{r-j,M} \in \mathbb{F}_q[T]$, we obtain $v_\mathfrak{P}(a_{r - j,M}) = (q-1)v_{\mathfrak{p}_\infty}(a_{r - j,M})$, so that by \eqref{valbound}, $a_{r-j,M}$ is a polynomial in $\mathbb{F}_q[T]$ of degree at most $\frac{j}{q-1}$. It follows that the finite set $$\mathcal{F} = \left\{F \in \mathbb{F}_q[T][X]\;\bigg|\; F = \sum_{j=0}^r b_{r-j} X^j, \; \deg(b_{r-j}) \leq \frac{j}{q-1}\right\},\; \left|\mathcal{F}\right|=\prod_{j=0}^r q^{\frac{j}{q-1}+1}$$ contains $F_M(X)$ for each $M \in \mathbb{F}_q[T]$. As the set of polynomials in $\mathcal{F}$ possesses finitely many roots and $\lambda_q^M$ is among them for each $M \in \mathbb{F}_q[T]$, it follows that there exist $M,N \in \mathbb{F}_q[T]$ such that $M \neq N$ and $\lambda_q^M = \lambda_q^N$. By definition of the Carlitz action, we thus obtain $$\lambda_q^{M-N} = \lambda_q^M - \lambda_q^N =0,$$ so that $\lambda \in \Lambda_{q,M-N} \subset \mathbb{T}_q$, as claimed. 
 \end{proof}
 
By definition of the Carlitz module, all elements of $ \mathbb{T}_q$ are integral over $\mathbb{F}_q[T]$ and take all of their conjugates over $\mathbb{F}_q(T)$ in $ \mathbb{T}_q$, so the converse of Lemma \ref{sqtq} also holds. Also, the elements of $\kappa(\mathbb{T}_q)$ are integral over $\mathbb{F}_q[T]$: The field $\mathbb{F}_q(T)(\kappa(\mathbb{T}_q))$ is the compositum of all of the totally real subfields of the cyclotomic function fields $K_{q,M}/\mathbb{F}_q(T)$ \cite[Definition 12.5.5]{Vil}. 

 \begin{cor} If $\lambda \in \kappa(\mathbb{S}_q)$ is integral over $\mathbb{F}_q[T]$ and all conjugates of $\lambda$ over $\mathbb{F}_q(T)$ are contained in $\kappa(\mathbb{S}_q)$, then $\lambda \in \kappa(\mathbb{T}_q)$. 
 \end{cor}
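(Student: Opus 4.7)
The plan is to reduce the corollary to Lemma \ref{sqtq} by lifting an element of $\kappa(\mathbb{S}_q)$ back up to $\mathbb{S}_q$ and checking the hypotheses are inherited. Given $\lambda \in \kappa(\mathbb{S}_q)$ integral over $\mathbb{F}_q[T]$ with all conjugates in $\kappa(\mathbb{S}_q)$, I would start by choosing $\mu \in \mathbb{S}_q$ with $\kappa(\mu) = \lambda$. Since $\kappa$ arises as the norm (up to sign) to the totally real subfield, i.e., via the $\mathbb{F}_q^*$-invariant $(q-1)$st power map (whose image generates the totally real compositum inside each $K_{q,M}$), $\mu$ satisfies a monic equation $X^{q-1} \pm \lambda = 0$ over $\mathbb{F}_q[T][\lambda]$. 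Integrality of $\lambda$ over $\mathbb{F}_q[T]$ then forces $\mu$ to be integral over $\mathbb{F}_q[T]$.

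Next I would verify that every $\mathbb{F}_q(T)$-conjugate of $\mu$ lies in $\mathbb{S}_q$. For any Galois automorphism $\sigma$, the equivariance $\kappa \circ \sigma = \sigma \circ \kappa$ (immediate from the polynomial nature of $\kappa$) gives $\kappa(\sigma(\mu)) = \sigma(\lambda) \in \kappa(\mathbb{S}_q)$ by hypothesis, so there exists $\mu_\sigma \in \mathbb{S}_q$ with $\kappa(\mu_\sigma) = \sigma(\lambda)$. The fiber $\kappa^{-1}(\sigma(\lambda))$ inside $V_q$ is exactly the $\mathbb{F}_q^*$-orbit $\{\zeta \mu_\sigma : \zeta \in \mathbb{F}_q^*\}$, and by Proposition \ref{sqvectorspace} this orbit lies entirely in $\mathbb{S}_q$. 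Hence $\sigma(\mu) \in \mathbb{S}_q$. Applying Lemma \ref{sqtq} to $\mu$ yields $\mu \in \mathbb{T}_q$, whence $\lambda = \kappa(\mu) \in \kappa(\mathbb{T}_q)$, as required.

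The main obstacle is the bookkeeping around $\kappa$: one must pin down $\kappa$ precisely enough to know that (i) preimages of integral elements are integral, (ii) $\kappa$ commutes with Galois, and (iii) the fiber over any point of $\kappa(\mathbb{S}_q)$ stays inside $\mathbb{S}_q$. Each of these reduces to the identification of $\kappa$ with the $\mathbb{F}_q^*$-norm and to the $\mathbb{F}_q$-vector space structure of $\mathbb{S}_q$; once those are in hand, the lifting step is formal and Lemma \ref{sqtq} does the real work.
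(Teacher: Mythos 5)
Your proof is correct and follows the same route as the paper, whose own proof is just the one-line remark that the corollary ``follows immediately from Lemma \ref{sqtq} and the definition of $\kappa$''; you have simply written out the lifting argument in full. The three points you isolate (integrality of a $(q-1)$st root, Galois equivariance of $u \mapsto u^{q-1}$, and stability of the fiber $\mathbb{F}_q^*\mu_\sigma$ under the $\mathbb{F}_q$-vector space structure of $\mathbb{S}_q$ from Proposition \ref{sqvectorspace}) are exactly the details the paper leaves implicit.
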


\begin{proof}
This follows immediately from Lemma \ref{sqtq} and the definition of $\kappa$.	
\end{proof}

\subsection{A pinch of topology} From \S 3.4, we observed that the $q$-unit circle $\mathbb{S}_q$ consists of solutions to the map. $\kappa : \mathbb{S}_q \rightarrow (\mathbb{F}_q(T))_\infty$ defined as $\kappa(u) = u^{q-1}$. In classical theory, the product of conjugates of any root of unity $z \in S^1$ is equal to one. We examine what the analogy to this is in $\mathbb{S}_q$ by description of the image of the map $\kappa$. For this, we first give some preliminary results on the topology of $\mathbb{S}_q$ and $\kappa(\mathbb{S}_q)$, from which we conclude that the $q$-unit circle is topologically similar to $S^1$. 

The following result can be viewed as the analogue to the statement that if a sequence $\{e^{i p_n/q_n}\}_{n \in \mathbb{N}}$ converges to $e^{i\theta}$ where $\theta \notin \mathbb{Q}$, then $\lim_{n \rightarrow \infty} p_n, q_n = \infty$. 

\begin{lem}\label{degreetoinfty}
Suppose that $\lambda^* \in \mathbb{S}_q \backslash \mathbb{T}_q$ is the limit of $\{\lambda_n\}_{n \in \mathbb{N}} \subset \mathbb{T}_q$. Letting $A_n \in \mathbb{F}_q[T]$ be such that $\lambda_n \in \Lambda_{q,A_n}$ for each $n \in \mathbb{N}$, then $\lim_{n \rightarrow \infty} \deg(A_n) = \infty$.
\end{lem}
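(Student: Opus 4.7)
The plan is to argue by contradiction using compactness/finiteness of the torsion modules $\Lambda_{q,A}$. The key observation is that for any fixed $A \in \mathbb{F}_q[T]$ with $\deg(A) = d$, the module $\Lambda_{q,A}$ is finite (it has cardinality $q^d$), and for any fixed degree bound $D$, there are only finitely many polynomials in $\mathbb{F}_q[T]$ of degree at most $D$ (namely at most $q^{D+1}$ of them).

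So, suppose for contradiction that $\deg(A_n) \not\to \infty$. Then after passing to a subsequence (which we denote again by $\{A_n\}$), we may assume that $\deg(A_n) \leq D$ for some fixed $D$ and all $n \in \mathbb{N}$. Since there are only finitely many polynomials in $\mathbb{F}_q[T]$ of degree at most $D$, a further pigeonhole extraction yields a subsequence $\{A_{n_k}\}$ which is constant: $A_{n_k} = A$ for some fixed $A \in \mathbb{F}_q[T]$. Then $\lambda_{n_k} \in \Lambda_{q,A}$ for every $k$.

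But $\Lambda_{q,A}$ is a finite subset of the completion at $\mathfrak{P}$, so it is discrete. Any convergent sequence in a finite (hence discrete) set must be eventually constant, so there exists $\mu \in \Lambda_{q,A}$ with $\lambda_{n_k} = \mu$ for all sufficiently large $k$. Since $\{\lambda_n\}$ converges to $\lambda^*$, the subsequence $\{\lambda_{n_k}\}$ also converges to $\lambda^*$, and therefore $\lambda^* = \mu \in \Lambda_{q,A} \subset \mathbb{T}_q$. This contradicts the hypothesis $\lambda^* \in \mathbb{S}_q \setminus \mathbb{T}_q$, completing the proof.

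There is no substantive obstacle here; the argument is purely a pigeonhole/discreteness argument. The only mild subtlety is that the $A_n$ are not canonically determined by $\lambda_n$ (any multiple of the annihilator works), but this is irrelevant since the argument only uses the existence of some such $A_n$ of bounded degree to force $\lambda_n$ into a fixed finite torsion module.
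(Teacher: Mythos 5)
Your proposal is correct and follows essentially the same route as the paper: assume the degrees do not tend to infinity, use pigeonhole on the finitely many polynomials of bounded degree to extract a constant subsequence $A_{n_k}=A^*$, and conclude $\lambda^*\in\Lambda_{q,A^*}\subset\mathbb{T}_q$, a contradiction. The only (immaterial) difference is the last step: you invoke discreteness of the finite set $\Lambda_{q,A^*}$ to force the subsequence to be eventually constant, whereas the paper passes the limit through the polynomial $x\mapsto x_q^{A^*}$ by continuity to get $(\lambda^*)_q^{A^*}=0$.
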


\begin{proof} Suppose that $C > 0$ is such that $\deg(A_n) \geq C$ for all $n \in \mathbb{N}$. As the set of polynomials in $\mathbb{F}_q[T]$ with degree at most $C$ is finite, there must be a subsequence $\{A_{n_k}\}_{k \in \mathbb{N}}$ of $\{A_n\}_{n \in \mathbb{N}}$ such that $A_{n_1} = A_{n_2} = \cdots := A^*$. By definition of the Carlitz action, $$(\lambda_{n_k})_{q}^{A^*} = \sum_{i=0}^{\deg(A^*)} \bfrac{A^*}{i} \lambda_{n_k}^{q^i} = 0.$$ As $\lim_{k \rightarrow \infty} \lambda_{n_k} = \lambda^*$, we obtain \begin{align*} (\lambda^*)_{q}^{A^*} &= \sum_{i=0}^{\deg(A^*)} \bfrac{A^*}{i} {\lambda^*}^{q^i} \\&= \sum_{i=0}^{\deg(A^*)} \bfrac{A^*}{i} {\left(\lim_{k \rightarrow \infty} \lambda_{n_k}\right)}^{q^i} \\&= \lim_{k \rightarrow \infty} \sum_{i=0}^{\deg(A^*)} \bfrac{A^*}{i} \lambda_{n_k}^{q^i} \\& = 0.\end{align*} Hence $\lambda^* \in \Lambda_{q,A^*} \subset \mathbb{T}_q$, a contradiction.\end{proof}

One can ask subtle questions about speed of convergence to limit points in $\mathbb{S}_q$ as a result of Lemma \ref{degreetoinfty}. In the following result, we prove the analogue of the Dirichlet approximation theorem.

\begin{lem}[Dirichlet approximation theorem] Suppose that $\lambda^* \in \mathbb{S}_q \backslash \mathbb{T}_q$. Let $\lambda^* = i(q-1) - 1$. Then for each $n \in \mathbb{N}$ with $n \geq i+1$, there exist $M_n \in \mathbb{F}_q[T]$ such that $\deg(M_n) = n$ and $\lambda_n \in \Lambda_{q,M_n}$ such that $$v_\mathfrak{P}(\lambda_n - \lambda^*) > (n-1)(q-1) -1.$$
\end{lem}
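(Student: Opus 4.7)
The plan is to construct $\lambda_n$ and $M_n$ directly by matching the Laurent expansion of $\lambda^*$ at a uniformiser $\pi$ of $V_q$ at $\mathfrak{P}$, interpreting the hypothesis as $v_\mathfrak{P}(\lambda^*) = i(q-1) - 1$.  Because the residue field at $\mathfrak{P}$ is $\mathbb{F}_q$, I would write
\[
 \lambda^* \;=\; \sum_{k \geq i(q-1)-1} c_k\,\pi^k, \qquad c_k\in\mathbb{F}_q,\; c_{i(q-1)-1}\neq 0,
\]
so that the desired bound $v_\mathfrak{P}(\lambda_n - \lambda^*) > (n-1)(q-1) - 1$ is equivalent to making $\lambda_n$ share every coefficient $c_k$ with $k\leq (n-1)(q-1)-1$.

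I would set $M_n := T^n$ and work inside the $n$-dimensional $\mathbb{F}_q$-subspace $\Lambda_{q,T^n}\subset V_q$.  The filtration
\[
 0 \;\subsetneq\; \Lambda_{q,T} \;\subsetneq\; \Lambda_{q,T^2} \;\subsetneq\; \cdots \;\subsetneq\; \Lambda_{q,T^n}
\]
has successive quotients of $\mathbb{F}_q$-dimension $1$, and by the valuation analysis used in Lemma \ref{infinityperiodic} (via \cite[Lemmas 1.5, 1.6]{GaRo}) a generator of $\Lambda_{q,T^{j+1}}\setminus\Lambda_{q,T^j}$ has valuation exactly $j(q-1)-1$; as the generator varies in that coset, its leading $\mathbb{F}_q$-coefficient ranges over all of $\mathbb{F}_q^{\ast}$.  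I would then proceed inductively: pick a generator $\lambda_n^{(i)}$ of $\Lambda_{q,T^{i+1}}$ whose leading coefficient equals $c_{i(q-1)-1}$, and for each $j = i+1,\ldots,n-1$ adjoin a correction $\delta_j \in \Lambda_{q,T^{j+1}}\setminus \Lambda_{q,T^j}$ whose leading coefficient equals $c_{j(q-1)-1}$.  The cumulative sum $\lambda_n := \lambda_n^{(i)} + \sum_{j=i+1}^{n-1} \delta_j$ lies in $\Lambda_{q,T^n}$ and agrees with $\lambda^*$ at every \emph{grid} position $k = j(q-1)-1$, $i \leq j \leq n-1$.

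The main obstacle is the \emph{off-grid} positions $k \in \{j(q-1),\ldots,(j+1)(q-1)-2\}$, which the naive matching above does not directly control.  To close this gap I would invoke the Kummer rigidity of Lemma \ref{Kummerconverse}: since $\lambda^*,\lambda_n \in \mathbb{S}_q$, both Kummer residues $(\lambda^*)^{q-1}$ and $\lambda_n^{q-1}$ lie in the coarser field $(\mathbb{F}_q(T))_\infty$, whose valuation at $\mathfrak{p}_\infty$ is the restriction of $v_\mathfrak{P}/(q-1)$.  Agreement at the $n-i$ grid positions should force
\[
 v_{\mathfrak{p}_\infty}\!\left( \lambda_n^{q-1} - (\lambda^*)^{q-1}\right) \;\geq\; n-1,
\]
and the factorisation
\[
 \lambda_n^{q-1} - (\lambda^*)^{q-1} \;=\; (\lambda_n - \lambda^*)\sum_{s=0}^{q-2} \lambda_n^{s}\,(\lambda^*)^{q-2-s},
\]
together with the fact that the second factor has $v_\mathfrak{P}$ equal to $(q-2)(i(q-1)-1)$, would then yield the required bound $v_\mathfrak{P}(\lambda_n - \lambda^*) \geq (n-1)(q-1)$ on the first factor.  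Verifying this Kummer transfer---in particular the bookkeeping that grid agreement alone pins down the Kummer residue modulo the claimed power of $\mathfrak{p}_\infty$---is the step I expect to be the main technical hurdle.
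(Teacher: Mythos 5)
Your construction has the right skeleton --- a greedy matching of $\lambda^*$ along the valuation grid $j(q-1)-1$ by elements of the filtration $\Lambda_{q,T}\subset\cdots\subset\Lambda_{q,T^n}$ is essentially a constructive version of the paper's argument, which instead counts the $q-1,\,q(q-1),\ldots$ elements of each valuation level via the Newton polygon \eqref{Newtonpoly} and pigeonholes against residue classes. (One small correction: not every element of $\Lambda_{q,T^{j+1}}\setminus\Lambda_{q,T^j}$ has valuation $j(q-1)-1$; only $q-1$ of them do, namely one $\mathbb{F}_q^*$-orbit, but that orbit does realize every leading residue, so your selection step survives.) The genuine gap is exactly where you flag it: the off-grid positions. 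Your proposed Kummer transfer does not close it. First, the implication ``agreement at the grid positions forces $v_{\mathfrak{p}_\infty}(\lambda_n^{q-1}-(\lambda^*)^{q-1})\geq n-1$'' is the very statement you are trying to prove in disguise --- pinning down the Kummer residue to that order requires controlling the off-grid coefficients of $\lambda_n-\lambda^*$, which is the unknown. Second, the bookkeeping is inconsistent: the cofactor $\sum_s\lambda_n^s(\lambda^*)^{q-2-s}$ has valuation $(q-2)(i(q-1)-1)$, so to conclude $v_\mathfrak{P}(\lambda_n-\lambda^*)\geq(n-1)(q-1)$ you would need $v_\mathfrak{P}$ of the Kummer difference to be at least $(n-1)(q-1)+(q-2)(i(q-1)-1)$, not $(n-1)(q-1)$; for $i\geq 1$ your claimed bound falls short.

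The fix is much simpler and is the key display \eqref{stardiffval} in the paper's proof: since $\mathbb{T}_q$ is an additive group and $\mathbb{S}_q$ is its completion, the difference $\lambda_n-\lambda^*$ is itself a limit of elements of $\mathbb{T}_q$, and every nonzero element of $\mathbb{T}_q$ has $\mathfrak{P}$-valuation of the form $k(q-1)-1$ by \cite[Lemma 1.5]{GaRo}; by the non-Archimedean property the same holds for the limit. Hence $v_\mathfrak{P}(\lambda_n-\lambda^*)$ can only take on-grid values, so the moment your choice of $\lambda_n^{(i)}$ (or of a correction $\delta_j$) kills the leading grid coefficient, the valuation of the difference automatically jumps to the \emph{next} grid level $(j+1)(q-1)-1$ or beyond --- the off-grid positions simply cannot carry the valuation. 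With that observation inserted, your induction runs to $j=n-1$ and yields $v_\mathfrak{P}(\lambda_n-\lambda^*)\geq n(q-1)-1>(n-1)(q-1)-1$ with $\lambda_n\in\Lambda_{q,T^n}$, which is what the lemma asserts. No Kummer descent is needed.
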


\begin{proof} As $\lambda^* \in \mathbb{S}_q$, there exists a nonnegative integer $i$ such that \begin{equation}\label{starval} v_\mathfrak{P}(\lambda^*) = i(q-1) - 1.\end{equation} By \cite[Lemma 1.5]{GaRo}, any $\lambda \in \mathbb{T}_q$ for which $v_\mathfrak{P}(\lambda- \lambda^*) > i(q-1) - 1$ must lie in some $\Lambda_{q,M}$ where $\deg(M) > i$. We wish to minimise the degree of such an $M$. Suppose that $\deg(M) = i+1$. By examination of the Newton polygon \eqref{Newtonpoly} in the proof of Lemma \ref{Kummerconverse}, there exist $q-1$ elements of $\Lambda_{q,M}$ with valuation equal to $i(q-1) - 1$ at $\mathfrak{P}$, and each of these are multiplicative conjugates over $\mathbb{F}_q^*$. As the residue field at $\mathfrak{P}$ is isomorphic to $\mathbb{F}_q$ and $\lambda^*$ also has valuation $i(q-1)-1$ \eqref{starval}, it follows that one of these such $\lambda\in \Lambda_{q,M}$ satisfies $$\lambda \equiv\lambda^* \pmod{\mathfrak{P}^{i(q-1)}}.$$ If $\deg(M) = i+2$, then again by the Newton polygon \eqref{Newtonpoly}, there are $q(q-1)$ elements of $\Lambda_{q,M}$ with valuation equal to $i(q-1) - 1$ at $\mathfrak{P}$. (These correspond to exactly $q$ distinct Kummer polynomial factors, each of degree $q-1$ over $((\mathbb{F}_q[T])_\infty)$.) As $\Lambda_{q,M}$ is an additive group and $\deg(M)=i+2$, it follows by \cite[Lemma 1.5]{GaRo} that, if two of these $q(q-1)$ elements $\lambda_1,\lambda_2\in\Lambda_{q,M}$ satisfy $$v_\mathfrak{P}(\lambda_2 - \lambda_1)> (i+1)(q-1) - 1,$$ then $\lambda_1 = \lambda_2$. Furthermore, if $v_\mathfrak{P}(\lambda_2 - \lambda_1)> i(q-1) - 1$, then as $\Lambda_{q,M}$ is an additive group, $\lambda_2 - \lambda_1 \in \Lambda_{q,M}$, so that again by \cite[Lemma 1.5]{GaRo}, we obtain $$v_\mathfrak{P}(\lambda_2 - \lambda_1) = j(q-1) - 1$$ for some nonnegative integer $j$, from which it follows that $v_\mathfrak{P}(\lambda_2 - \lambda_1)\geq (i+1)(q-1) - 1$. Also, for any $\lambda \in\Lambda_{q,M}$, we have \begin{equation} \label{stardiffval} v_\mathfrak{P}(\lambda -\lambda^*)=k(q-1)-1\end{equation} for some nonnegative integer $k$, as one easily sees by letting $\{\mu_n\}_{n\in\mathbb{N}} \subset \mathbb{T}_q$ be such that $\lim_{n\rightarrow\infty} \mu_n=\lambda^*$, so that $\lambda-\mu_n \in \Lambda_{q,M}$ and $v_\mathfrak{P}(\lambda-\mu_n) = k_n(q-1)-1$, which by the non-Archimedean property is eventually equal to $v_\mathfrak{P}(\lambda -\lambda^*)$.

We have already shown that there are $q(q-1)$ elements of $\Lambda_{q,M}$ which occupy distinct classes in $\mathfrak{P}^{i(q-1)-1}/\mathfrak{P}^{(i+1)(q-1)}$. As such $\lambda_1,\lambda_2$ satisfy $$v_\mathfrak{P}(\lambda_2 - \lambda_1)> i(q-1) - 1\Rightarrow v_\mathfrak{P}(\lambda_2 - \lambda_1)\geq (i+1)(q-1) - 1,$$ then by \eqref{stardiffval} and comparing cardinalities, there must be some $\lambda$ among these $q(q-1)$ elements for which $v_\mathfrak{P}(\lambda - \lambda^*) > (i+1)(q-1) - 1$. This concludes the proof for $\deg(M)=i+2$. 

Via the Newton polygon \eqref{Newtonpoly}, one may again proceed similarly for higher values of $\deg(M)$. 
\end{proof}

Potential refinements of this are left as an exercise to the interested reader. By continuity of $\kappa$ and the proofs of Lemmas \ref{Kummerconverse} and \ref{degreetoinfty}, we may give an explicit description of $\kappa(\mathbb{S}_q)$ in terms of polynomials. In the following result, notice the interesting deviation from the classical $S^1$, for which the norm map sends every element to one. This is a natural consequence of the fact that the norm remains multiplicative, whereas the Carlitz action is $\mathbb{F}_q[T]$-additive.

\begin{cor} The set $\kappa(\mathbb{S}_q)$ is the closure at $\mathfrak{p}_\infty$ of the collection of roots of the polynomials $$\Xi_{q,A}(v) =\sum_{i=0}^{\deg(A)} \bfrac{A}{i} v^{\frac{q^i-1}{q-1}},\qquad (A \in \mathbb{F}_q[T])$$ which were introduced in \eqref{xi}.  If $v^* \in \kappa(\mathbb{S}_q\backslash \mathbb{T}_q)$ and $\{v_n\}_{n \in \mathbb{N}} \subset \kappa(\mathbb{T}_q)$ satisfies $\lim_{n \rightarrow \infty} v_n = v^*$, then each $v_n$ is a root of  $\Xi_{q,A_n}(v)$, where $A_n \in \mathbb{F}_q[T]$ is such that $\lim_{n \rightarrow \infty} \deg(A_n) = \infty$. 
\end{cor}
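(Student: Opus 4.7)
The plan is to identify $\kappa(\mathbb{T}_q)$ explicitly with the union of root sets of the polynomials $\Xi_{q,A}$, and then pass to $\mathfrak{p}_\infty$-closures using continuity of the $(q-1)$-th power map; the second assertion is handled by a pigeonhole argument combined with an explicit identification of the $(q-1)$-th roots of unity in $V_q$.

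First, from~\eqref{xi} one has for any nonzero $A \in \mathbb{F}_q[T]$ and any nonzero $u$ the factorization $u_q^A = u\cdot \Xi_{q,A}(u^{q-1})$, so $\lambda \in \Lambda_{q,A}\setminus\{0\}$ is equivalent to $\kappa(\lambda) = \lambda^{q-1}$ being a root of $\Xi_{q,A}$. This gives the set-level equality $\kappa(\mathbb{T}_q\setminus\{0\}) = \bigcup_{A \neq 0} \{v : \Xi_{q,A}(v) = 0\}$. Since $\mathbb{S}_q$ is by definition the $\mathfrak{P}$-completion of $\mathbb{T}_q$ and $\kappa$ is polynomial, hence continuous, $\kappa(\mathbb{S}_q)$ is contained in the $\mathfrak{p}_\infty$-closure of $\kappa(\mathbb{T}_q)$. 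For the reverse inclusion I would invoke Lemma~\ref{val-1}: any sequence $\{\lambda_n\} \subset \mathbb{T}_q$ with $\kappa(\lambda_n) \to v$ lies in the compact ball $\{x \in V_q : v_\mathfrak{P}(x) \geq -1\}$, so extracting a convergent subsequence $\lambda_{n_k} \to \lambda \in \mathbb{S}_q$ gives $\kappa(\lambda) = v$.

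For the second assertion I would argue by contradiction. Writing $v_n = \lambda_n^{q-1}$ with $\lambda_n \in \mathbb{T}_q$, I take $A_n$ to be the monic polynomial of minimal degree such that $\lambda_n \in \Lambda_{q,A_n}$, so that $v_n$ is indeed a root of $\Xi_{q,A_n}$. If $\deg(A_n)$ did not tend to infinity, then by finiteness of monic polynomials of bounded degree over $\mathbb{F}_q$, together with finiteness of each $\Lambda_{q,A}$, one could extract a subsequence along which $A_{n_k} = A$ and $\lambda_{n_k} = \lambda$ are both constant, forcing $v^* = \lambda^{q-1}$ with $\lambda \in \mathbb{T}_q$.

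The main obstacle is then to derive a contradiction from this: I must show that every preimage of $v^*$ in $\mathbb{S}_q$ already lies in $\mathbb{T}_q$. Any $\mu \in V_q$ with $\mu^{q-1} = v^*$ differs from $\lambda$ by an element of $V_q$ whose $(q-1)$-th power equals $1$. Since $e(\mathfrak{P}|\mathfrak{p}_\infty) = q-1$ and $f(\mathfrak{P}|\mathfrak{p}_\infty) = 1$, the residue field at $\mathfrak{P}$ is $\mathbb{F}_q$, and Lemma~\ref{Henselgeneral} applied to $X^{q-1}-1$ identifies the set of such roots in $V_q$ with the Teichm\"uller lifts of $\mathbb{F}_q^*$, which coincide with $\mathbb{F}_q^*$ itself. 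Hence $\mu \in \mathbb{F}_q^*\lambda \subseteq \mathbb{F}_q\cdot\mathbb{T}_q = \mathbb{T}_q$ by Proposition~\ref{sqvectorspace}, so $\kappa^{-1}(v^*) \cap \mathbb{S}_q \subseteq \mathbb{T}_q$, contradicting $v^* \in \kappa(\mathbb{S}_q\setminus\mathbb{T}_q)$.
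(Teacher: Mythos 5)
Your proposal is correct, and it follows the route the paper intends --- the paper offers no written proof of this corollary, merely asserting that it follows ``by continuity of $\kappa$ and the proofs of Lemmas \ref{Kummerconverse} and \ref{degreetoinfty}.'' Your first part (the factorization $u_q^A = u\,\Xi_{q,A}(u^{q-1})$ identifying $\kappa(\mathbb{T}_q\setminus\{0\})$ with the union of root sets, plus continuity and a compactness/subsequence argument for the reverse inclusion) is exactly the content of Lemma \ref{Kummerconverse} combined with Lemma \ref{sqcompact}. Where you genuinely add something is in the second assertion: the paper's Lemma \ref{degreetoinfty} concerns a sequence $\lambda_n\in\Lambda_{q,A_n}$ converging in $\mathbb{S}_q$, whereas the corollary concerns a sequence $v_n$ converging in $\kappa(\mathbb{S}_q)$, and transferring between the two requires controlling the fiber $\kappa^{-1}(v^*)\cap\mathbb{S}_q$. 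Your observation that the roots of $X^{q-1}-1$ in $V_q$ are precisely $\mathbb{F}_q^*$ (since $e(\mathfrak{P}|\mathfrak{p}_\infty)=q-1$, $f=1$, and Hensel lifts each residue uniquely), so that any two preimages of $v^*$ differ by a constant and hence $\kappa^{-1}(v^*)$ meets $\mathbb{T}_q$ only if it is contained in $\mathbb{T}_q$, is the step the paper glosses over; it is needed, and your pigeonhole-plus-contradiction formulation is a clean way to package it. The only cosmetic gap is the point $0=\kappa(0)$, which is not a root of any $\Xi_{q,A}$ (since $\Xi_{q,A}(0)=A$) but does lie in the closure of the root sets, so the stated equality survives.
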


We now give the topological description of $\mathbb{S}_q$, beginning with density:

\begin{lem} \label{sqnowheredense} The unit circle $\mathbb{S}_q$ is nowhere dense in $V_q$. \end{lem}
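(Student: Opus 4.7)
The strategy is to show that $\mathbb{S}_q$ has empty interior, which suffices since $\mathbb{S}_q$ is closed in $V_q$ by construction (being the topological completion of $\mathbb{T}_q$). The degenerate case $q = 2$ is immediate: Theorem \ref{vsq-1} gives $\dim_{(\mathbb{F}_q(T))_\infty} V_q = 1$, so $V_q = (\mathbb{F}_q(T))_\infty$, and Lemma \ref{infinitysq} then forces $\mathbb{S}_q = \{0\}$, which is trivially nowhere dense.

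Henceforth assume $q > 2$, and suppose for contradiction that $\mathbb{S}_q$ has nonempty interior, so that some neighborhood $U_{u,N} := \{w \in V_q : v_\mathfrak{P}(w - u) > N\}$ is contained in $\mathbb{S}_q$. One may arrange $u \neq 0$: if $u = 0$, choose any $u' \in V_q \setminus \{0\}$ with $v_\mathfrak{P}(u') > N$; then $U_{u',N} \subseteq U_{0,N} \subseteq \mathbb{S}_q$, so $u'$ is a nonzero interior point.

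The main idea is to extract strong constraints from the Kummer characterization of Lemma \ref{Kummerconverse}: every $w \in \mathbb{S}_q$ satisfies $w^{q-1} \in (\mathbb{F}_q(T))_\infty$. For an arbitrary $\alpha \in V_q$, consider the polynomial
\begin{equation*}
f_\alpha(\delta) \;=\; (u + \delta\alpha)^{q-1} \;=\; \sum_{k=0}^{q-1} \binom{q-1}{k} u^{q-1-k}\alpha^k\, \delta^k \;\in\; V_q[\delta].
\end{equation*}
For any $\delta \in (\mathbb{F}_q(T))_\infty$ with $v_\mathfrak{P}(\delta\alpha) > N$, the element $u + \delta\alpha$ lies in $U_{u,N} \subseteq \mathbb{S}_q$, whence $f_\alpha(\delta) \in (\mathbb{F}_q(T))_\infty$. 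There are infinitely many such $\delta$, and $\deg f_\alpha \leq q - 1$, so Lagrange interpolation at $q$ distinct such points produces an interpolant with coefficients in $(\mathbb{F}_q(T))_\infty$ which must coincide with $f_\alpha$. Thus every coefficient of $f_\alpha$ lies in $(\mathbb{F}_q(T))_\infty$.

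Extracting the coefficient of $\delta^1$ yields $(q-1) u^{q-2} \alpha \in (\mathbb{F}_q(T))_\infty$. Since $q - 1 \equiv -1 \pmod{p}$ is a unit in $\mathbb{F}_q$ and $u \neq 0$ is invertible in $V_q$, this forces $\alpha \in u^{-(q-2)} (\mathbb{F}_q(T))_\infty$. As $\alpha \in V_q$ was arbitrary, $V_q \subseteq u^{-(q-2)}(\mathbb{F}_q(T))_\infty$, a one-dimensional $(\mathbb{F}_q(T))_\infty$-subspace of $V_q$, contradicting $\dim_{(\mathbb{F}_q(T))_\infty} V_q = q - 1 \geq 2$. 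The main subtlety is the Lagrange interpolation step, which goes through cleanly only because $(\mathbb{F}_q(T))_\infty$ is infinite and the interpolation coefficients are rational functions of the nodes and values, all lying in $(\mathbb{F}_q(T))_\infty$; aside from that, the reduction $u \neq 0$ and the observation that $q - 1$ is a unit in characteristic $p$ are the only minor hurdles.
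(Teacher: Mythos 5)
Your proof is correct, but it follows a genuinely different route from the paper's. The paper exploits the additive group structure of $\mathbb{S}_q$ (Proposition \ref{sqvectorspace}) together with Lemma \ref{infinitysq}: for $u \in \mathbb{S}_q$ and any small nonzero $M_n \in (\mathbb{F}_q(T))_\infty$, the translate $u + M_n$ cannot lie in $\mathbb{S}_q$ (else $M_n = (u+M_n) - u \in \mathbb{S}_q \cap (\mathbb{F}_q(T))_\infty = \{0\}$), so the complement is dense in two lines, uniformly in $q$. You instead pass through the Kummer characterization of Lemma \ref{Kummerconverse} and the dimension count of Theorem \ref{vsq-1}: your interpolation argument shows that the coefficient of $\delta$ in $(u+\delta\alpha)^{q-1}$ — essentially the directional derivative of $\kappa$ at $u$ in the direction $\alpha$ — would have to lie in $(\mathbb{F}_q(T))_\infty$ for every $\alpha$, forcing $V_q$ to be one-dimensional. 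This is longer, requires the separate (and correctly handled) $q=2$ case, and leans on two substantial earlier results rather than one, but it also proves strictly more: the entire Kummer locus $\{w \in V_q : w^{q-1} \in (\mathbb{F}_q(T))_\infty\}$, which contains $\mathbb{S}_q$, already has empty interior when $q > 2$. The Lagrange interpolation step and the observations that $q-1$ is a unit in characteristic $p$ and that any point of an ultrametric ball may serve as its center are all sound, so the argument stands as written.
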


\begin{proof} By definition, $\mathbb{S}_q$ is closed, so it is enough to show that the complenent $\mathbb{S}_q^c = V_q \backslash \mathbb{S}_q$ is dense in $V_q$. To see this, let $u \in \mathbb{S}_q$, and let $\{M_n\}_{n \in \mathbb{N}} \subset (\mathbb{F}_q(T))_\infty$ be chosen such that $M_n \neq 0$ for all $n \in \mathbb{N}$ and $\lim_{n \rightarrow \infty} M_n = 0$. By Lemma \ref{infinitysq}, $M_n \notin \mathbb{S}_q$ for any $n \in \mathbb{N}$, from which it follows that  $\{u + M_n\}_{n \in \mathbb{N}} \cap \mathbb{S}_q = \varnothing$. Clearly $\lim_{n \rightarrow \infty} (u + M_n) = u$, concluding the proof. \end{proof}

The space $V_q$ is naturally endowed with a metric according to the valuation at $\mathfrak{P}$. As is true for $S^1$, we may show that the $q$-unit circle $\mathbb{S}_q$ is compact.

\begin{lem}[Compactness] \label{sqcompact} $\mathbb{S}_q$ is compact in the $\mathfrak{P}$-metric. 
\end{lem}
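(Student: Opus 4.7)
The plan is to exhibit $\mathbb{S}_q$ as a closed subset of a compact ball in $V_q$, using the fact that $V_q$ is a non-archimedean local field with finite residue field.

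First I would note that $\mathbb{S}_q$ is closed in $V_q$ by construction, since it is defined as the completion of $\mathbb{T}_q$ inside $V_q$ at the place $\mathfrak{P}$. Second, by Lemma \ref{val-1}, every element $u \in \mathbb{S}_q$ satisfies $v_\mathfrak{P}(u) \geq -1$, so $\mathbb{S}_q$ is contained in the closed ball $B := \{u \in V_q : v_\mathfrak{P}(u) \geq -1\}$. It therefore suffices to show that $B$ is compact, because a closed subset of a compact set is compact.

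To prove $B$ is compact, I would invoke the structure of $V_q$ as a local field. By Theorem \ref{vsq-1}, $V_q$ is a finite extension of the locally compact field $(\mathbb{F}_q(T))_\infty$, and by the lemma preceding Theorem \ref{vsq-1} its completion at $\mathfrak{P}$ has residue field $\mathbb{F}_q$ (since $f(\mathfrak{P}|\mathfrak{p}_\infty) = 1$) and ramification index $q-1$. Hence $V_q$ is itself a complete discretely valued field with finite residue field $\mathbb{F}_q$. For any such field, the valuation ring $\mathcal{O}_\mathfrak{P}$ is compact, because it is complete and admits, for each $n$, a finite cover by the cosets of $\mathfrak{P}^n$, of which there are $|\mathcal{O}_\mathfrak{P}/\mathfrak{P}^n| = q^n$ many; equivalently, $\mathcal{O}_\mathfrak{P} = \varprojlim_n \mathcal{O}_\mathfrak{P}/\mathfrak{P}^n$ is an inverse limit of finite sets and therefore profinite. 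The ball $B$ is a translate (by a scaling) of $\mathcal{O}_\mathfrak{P}$: choosing any uniformiser $\pi$ with $v_\mathfrak{P}(\pi) = 1$, multiplication by $\pi^{-1}$ is a $\mathfrak{P}$-adic homeomorphism $\mathcal{O}_\mathfrak{P} \to B$, so $B$ inherits compactness.

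Combining the two ingredients, $\mathbb{S}_q$ is a closed subset of the compact set $B$, hence compact. The only step that requires genuine care is the appeal to local compactness of $V_q$: one must verify that $V_q$ really is a complete discretely valued field with finite residue field, but this follows directly from the fact that $V_q$ is a finite separable extension of $(\mathbb{F}_q(T))_\infty$ with a unique place $\mathfrak{P}$ above $\mathfrak{p}_\infty$ of residue degree one, so there is no subtlety from multiple places or infinite residue extensions. All other steps are topological generalities for non-archimedean local fields.
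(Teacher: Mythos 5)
Your proof is correct, but it takes a genuinely different route from the paper's. The paper argues sequential compactness directly: given a sequence in $\mathbb{S}_q$, it uses the fact (inherited from \cite[Lemma 1.5]{GaRo} via the non-Archimedean property) that every nonzero element of $\mathbb{S}_q$ has valuation of the form $i(q-1)-1$ with $i \geq 0$, so either the valuations tend to infinity (and the sequence converges to $0$) or one may pass to a subsequence of constant valuation; it then repeatedly refines the subsequence using the finiteness of the residue field to land in a common residue class modulo higher and higher powers of $\mathfrak{P}$, producing a Cauchy subsequence that converges by completeness of $\mathbb{S}_q$ itself. You instead package the same three ingredients --- the lower bound $v_\mathfrak{P}(u) \geq -1$ from Lemma \ref{val-1}, the finite residue field ($f(\mathfrak{P}|\mathfrak{p}_\infty)=1$), and completeness --- into the standard statement that the valuation ring of a complete discretely valued field with finite residue field is profinite, hence compact, and then reduce to ``closed subset of a compact ball.'' Your appeal to closedness of $\mathbb{S}_q$ in $V_q$ is legitimate (the paper itself asserts this in the proof of Lemma \ref{sqnowheredense}), and the identification of $B$ with a uniformiser-translate of $\mathcal{O}_\mathfrak{P}$ is fine since $\mathfrak{p}_\infty$ is totally ramified in $V_q$. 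What your approach buys is brevity and a clean conceptual reduction to local-field generalities; what the paper's approach buys is a self-contained argument that never needs to invoke local compactness of the ambient field and that exhibits the explicit subsequence-extraction mechanism (constant valuation, then constant residue classes) reused elsewhere in the paper, e.g.\ in the Dirichlet approximation lemma.
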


\begin{proof} Let $\{\lambda_n\}_{n \in \mathbb{N}}$ be a sequence in $\mathbb{S}_q$. We wish to show that this has a convergent subsequence. By definition, each $\lambda_n$ is a limit of a sequence $\{\lambda_{n,k}\}_{k \in \mathbb{N}}$, and by \cite[Lemma 1.5]{GaRo}, it follows that there exists an integer $i_{n,k} \geq 0$ such that $v_\mathfrak{P}(\lambda_{n,k}) = i_{n,k}(q-1) - 1$. Hence by the non-Archimedean property, there exists for each $n \in \mathbb{N}$ some $i_n \geq 0$ such that $v_\mathfrak{P}(\lambda_{n}) = i_{n}(q-1) - 1$. Clearly if $\lim_{n \rightarrow \infty} i_n = \infty$, then $\lim_{n \rightarrow \infty} \lambda_{n} = 0$, and the sequence $\{\lambda_n\}_{n \in \mathbb{N}}$ is thus convergent. We may therefore assume that the set $\{i_n\}_{n \in \mathbb{N}}$ is bounded. As $i_n \geq 0$ for all $n \in \mathbb{N}$, we find a subsequence $\{\lambda_{n_j}\}_{j \in \mathbb{N}}$ such that $i_{n_1} = i_{n_2} = \cdots = i^*$. As the residue field is isomorphic to $\mathbb{F}_q$, we can pass to a subsequence of $\{\lambda_{n_j}\}_{j \in \mathbb{N}}$, all of which belong to the same residue class modulo $\mathfrak{P}^{i^*(q-1) -1}$. Continuing inductively, we may find a subsequence $\{\lambda_n^*\}_{n \in \mathbb{N}}$ for which $$v_\mathfrak{P}(\lambda_2^* - \lambda_1^*) < v_\mathfrak{P}(\lambda_3^* - \lambda_2^*) < \cdots $$ In particular, it follows that $\{\lambda_n^*\}_{n \in \mathbb{N}}$ is Cauchy, hence convergent by completeness of $\mathbb{S}_q$.\end{proof}

We now wish to show that $\mathbb{S}_q$ is of measure zero within $V_q$. Of course, we have not yet, nor will we, define a measure. For our results, we will only need to assume that whatever measure we define is finite on compact sets in the $\mathfrak{P}$-metric.

\begin{lem} \label{sqmeasurezero} Let $\mu$ be a measure on $V_q$ such that $\mu(K) < \infty$ for every compact set $K \subset V_q$. Then $\mu(\mathbb{S}_q) = 0$. 
\end{lem}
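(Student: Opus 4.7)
The plan is to reduce the claim to a disjoint-translates argument using Lemma \ref{infinitysq}, Lemma \ref{val-1}, and the compactness of $\mathbb{S}_q$ (Lemma \ref{sqcompact}). The statement as written is only meaningful if $\mu$ is translation-invariant on the locally compact abelian group $(V_q,+)$ (otherwise a point mass on $\mathbb{S}_q$ would contradict the conclusion), so I would proceed under the implicit assumption that $\mu$ is the Haar measure on $V_q$, or any translation-invariant Radon measure with the given finiteness property.

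First I would consolidate the ingredients already proved. By Lemma \ref{sqcompact}, $\mathbb{S}_q$ is compact, so $\mu(\mathbb{S}_q) < \infty$. By Lemma \ref{val-1}, $\mathbb{S}_q$ is contained in the closed ball $B := \{u \in V_q : v_\mathfrak{P}(u) \geq -1\}$, which is compact in the $\mathfrak{P}$-metric (since $V_q/(\mathbb{F}_q(T))_\infty$ is finite-dimensional and the residue field at $\mathfrak{P}$ is $\mathbb{F}_q$), so $\mu(B) < \infty$ as well. For any $\alpha$ in the valuation ring $\vartheta_{\mathfrak{p}_\infty} \subset (\mathbb{F}_q(T))_\infty$, one has $v_\mathfrak{P}(\alpha) = (q-1)v_{\mathfrak{p}_\infty}(\alpha) \geq 0$ (ramification index $q-1$ from Theorem \ref{vsq-1}), and hence by the non-Archimedean inequality
\[
v_\mathfrak{P}(u + \alpha) \geq \min\{v_\mathfrak{P}(u),\, v_\mathfrak{P}(\alpha)\} \geq -1
\]
for every $u \in \mathbb{S}_q$, so $\mathbb{S}_q + \alpha \subset B$.

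Next I would exploit Lemma \ref{infinitysq} to guarantee disjointness of these translates: for $\alpha, \beta \in \vartheta_{\mathfrak{p}_\infty}$ with $\alpha \neq \beta$, any common point of $\mathbb{S}_q + \alpha$ and $\mathbb{S}_q + \beta$ would force $\alpha - \beta \in \mathbb{S}_q \cap (\mathbb{F}_q(T))_\infty = \{0\}$, a contradiction. Since $\vartheta_{\mathfrak{p}_\infty} \cong \mathbb{F}_q[[1/T]]$ is uncountable, pick any countably infinite family $\{\alpha_n\}_{n \in \mathbb{N}} \subset \vartheta_{\mathfrak{p}_\infty}$ of distinct elements. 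Then $\{\mathbb{S}_q + \alpha_n\}_{n \in \mathbb{N}}$ is a pairwise disjoint family of measurable subsets of $B$, and by translation invariance $\mu(\mathbb{S}_q + \alpha_n) = \mu(\mathbb{S}_q)$ for every $n$. Countable additivity now gives
\[
\sum_{n=1}^\infty \mu(\mathbb{S}_q) = \mu\!\left(\bigsqcup_{n \in \mathbb{N}} (\mathbb{S}_q + \alpha_n)\right) \leq \mu(B) < \infty,
\]
which forces $\mu(\mathbb{S}_q) = 0$.

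The substantive obstacle is not the algebra but the hidden hypothesis: one must commit to translation invariance of $\mu$, since Lemma \ref{infinitysq} by itself only produces set-theoretic disjointness and carries no measure-theoretic weight without some symmetry of $\mu$. A secondary point to verify carefully is that $B$ really is compact in $V_q$ — this is where Theorem \ref{vsq-1} (finite dimension over $(\mathbb{F}_q(T))_\infty$) together with finiteness of the residue field at $\mathfrak{P}$ does the work, making $B$ a finite union of translates of the compact valuation ring of $V_q$. Once those two points are in place, the argument is essentially automatic, echoing the classical proof that any proper closed subgroup (or subgroup-like compact set avoiding a dense translation orbit) of a locally compact abelian group has Haar measure zero.
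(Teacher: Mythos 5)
Your proof is correct and follows the same overall strategy as the paper's: pack infinitely many pairwise disjoint translates of $\mathbb{S}_q$ into a fixed compact ball and invoke finiteness of the measure there. The difference lies in where the translating elements come from. You translate by elements of $\vartheta_{\mathfrak{p}_\infty} \subset (\mathbb{F}_q(T))_\infty$ and get disjointness from Lemma \ref{infinitysq} ($\mathbb{S}_q \cap (\mathbb{F}_q(T))_\infty = \{0\}$) together with the group structure of $\mathbb{S}_q$; the paper instead translates by series $v = \sum_{k\geq 0} a_k \lambda_0^{-k}$ in the uniformizer $\lambda_0^{-1}$ of $V_q$ whose leading exponent is $\not\equiv -1 \pmod{q-1}$, so that $v \notin \mathbb{S}_q$ by the valuation constraint $v_\mathfrak{P}(\lambda) \equiv -1 \pmod{q-1}$ on $\mathbb{S}_q$, and then uses the partition \eqref{disjoint}. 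Your mechanism is arguably cleaner: it does not require the congruence condition on valuations (and hence does not need the caveat $q>2$ that the paper's construction requires), and it reuses a lemma already established. You are also right to flag the hidden hypothesis: the paper's own final step (``contains infinitely many copies of disjoint translations of $\mathbb{S}_q$, contradicting $\mu(\mathbb{S}_q)>0$'') silently assumes that all translates have the same measure, i.e.\ translation invariance, exactly as you note; without it a point mass on $\mathbb{S}_q$ defeats the statement. So your explicit restriction to Haar-type measures is not a weakness of your argument but a correction that the paper's proof also needs.
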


\begin{proof} For a contradiction, suppose $\mu(\mathbb{S}_q) > 0$. For each $\lambda \in \mathbb{S}_q$, we have for some (nonnegative) integer $i$ that $v_\mathfrak{P}(\lambda) = i(q-1)-1 \equiv -1 \pmod{q-1}.$ Let $\lambda_0 \in \Lambda_{q,T}\backslash \{0\}$, so that $\lambda_0^{q-1} = -T$, and hence $\lambda_0^{-1}$ is a prime element of $V_q$. Consider the collection of series of the form \begin{equation} \label{vseries}v = \sum_{k=0}^\infty a_k \lambda_0^{-k}, \qquad a_k \in \mathbb{F}_q,\;\; k=0,1,\ldots.\end{equation} Such a sequence is convergent and is thus and element of $\mathbb{S}_q$. Provided that the smallest value of $k$ such that $a_k \neq 0$ satisfies $k \neq -1 \pmod{q-1}$, we have $v \notin \mathbb{S}_q$. Provided that $q > 2$, there is an uncountable infinity of such series $v$. By \eqref{disjoint}, for each such $v$, we have $\mathbb{S}_q(v) \cap \mathbb{S}_q = \varnothing$. By definition, $v_\mathfrak{P}(v) \geq 0$, and for each $\lambda \in \mathbb{S}_q$, we have $v_\mathfrak{P}(\lambda) \geq -1$. Clearly the set of series of the form \eqref{vseries}, equal to the valuation ring $\varnothing_\mathfrak{P}$ at $\mathfrak{P}$, is compact, and by Lemma \ref{sqcompact}, it follows that the union $$\mathbb{S}_q(\vartheta_\mathfrak{P}) = \bigcup_{v \in \vartheta_\mathfrak{P}}\mathbb{S}_q(v).$$ is also compact. Hence $\mu(\mathbb{S}_q(\vartheta_\mathfrak{P})) < \infty$ But $\mathbb{S}_q(\vartheta_\mathfrak{P})$ contains infinitely many copies of disjoint translations of $\mathbb{S}_q$, contradicting $\mu(\mathbb{S}_q) > 0$.
	\end{proof}

The set $\kappa(\mathbb{S}_q)$ admits a natural $\mathbb{F}_q[T]$-action: Let $\nu \in \kappa(\mathbb{T}_q)$ and $u \in \mathbb{T}_q$ such that $u^{q-1} = \nu$. Let $A \in \mathbb{F}_q[T]$. We define $$A \star_q \nu := \kappa(u_q^A) = (u_q^A)^{q-1}.$$ As $u_q^A$ is a polynomial depending only on $A$, we may pass to completions to define an action of $\mathbb{F}_q[T]$ on all of $\kappa(\mathbb{S}_q)$. We note that as $\kappa(\mathbb{S}_q)$ is the image of $\mathbb{S}_q$ via a non-additive polynomial, it admits the $\mathbb{F}_q[T]$-action $\star_q$ but does not possess an $\mathbb{F}_q[T]$-module structure. We now describe $\kappa(\mathbb{S}_q)$ topologically with the help of Lemma \ref{sqnowheredense}: 

\begin{cor} The set $\kappa(\mathbb{S}_q)$ is nowhere dense and compact in $(\mathbb{F}_q(T))_\infty$. If $\mu$ is a measure on $(\mathbb{F}_q(T))_\infty$ such that $\mu(K) < \infty$ for every compact set $K \subset (\mathbb{F}_q(T))_\infty$, then $\mu(\kappa(\mathbb{S}_q)) = 0$.  \end{cor}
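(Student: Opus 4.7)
The plan is to handle compactness, measure zero, and nowhere denseness in turn, transferring the corresponding properties of $\mathbb{S}_q \subset V_q$ to $\kappa(\mathbb{S}_q) \subset (\mathbb{F}_q(T))_\infty$. Compactness is immediate: $\kappa(u) = u^{q-1}$ is a continuous polynomial map, so compactness of $\mathbb{S}_q$ (Lemma \ref{sqcompact}) gives compactness of $\kappa(\mathbb{S}_q)$ in $V_q$; since $(\mathbb{F}_q(T))_\infty$ is closed in $V_q$ and contains $\kappa(\mathbb{S}_q)$ by Lemma \ref{Kummerconverse}, $\kappa(\mathbb{S}_q)$ is compact there as well.

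The crucial preparatory step is to localise $\mathbb{S}_q$ inside $V_q$. Fix a uniformiser $\varpi$ of $\mathfrak{P}$ with $\varpi^{q-1} = -1/T$, as afforded by the description of $(K_{q,M})_\mathfrak{P}$ used in the proof of Theorem \ref{vsq-1}, so that $V_q = \bigoplus_{i=0}^{q-2} (\mathbb{F}_q(T))_\infty \varpi^i$. I would first show that the locus $S := \{u \in V_q : u^{q-1} \in (\mathbb{F}_q(T))_\infty\}$ is exactly the union of the lines $(\mathbb{F}_q(T))_\infty \varpi^i$: writing $u = \sum_{i=0}^{q-2} a_i \varpi^i$, expanding $u^{q-1}$ multinomially, and using that $\binom{q-1}{k} \not\equiv 0 \pmod{p}$ for $0 \leq k \leq q-1$ (by Lucas) forces the vanishing of every mixed coefficient, so $u$ must be a single-coordinate monomial. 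Lemma \ref{Kummerconverse} then places $\mathbb{S}_q \subset S$, and the valuation constraint $v_\mathfrak{P}(\lambda) \equiv -1 \pmod{q-1}$ for nonzero $\lambda \in \mathbb{T}_q$ (the key estimate used in the proof of Lemma \ref{infinitysq}) isolates the unique admissible line $i = q - 2$. Passing to closures, $\mathbb{S}_q \subset (\mathbb{F}_q(T))_\infty \varpi^{q-2}$; under the homeomorphism $a \leftrightarrow a\varpi^{q-2}$, $\mathbb{S}_q$ corresponds to a compact additive subgroup $\mathbb{A} \subset (\mathbb{F}_q(T))_\infty$, and $\kappa(\mathbb{S}_q) = (-1/T)^{q-2} \cdot p(\mathbb{A})$, where $p(a) = a^{q-1}$.

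For the measure claim, given a locally finite measure $\mu$ on $(\mathbb{F}_q(T))_\infty$, the push-forward $\tilde\mu$ on $V_q$ defined by $\tilde\mu(E) := \mu(\{a \in (\mathbb{F}_q(T))_\infty : a\varpi^{q-2} \in E\})$ is locally finite (compact preimages under the homeomorphism are compact), and Lemma \ref{sqmeasurezero} applied to $\tilde\mu$ yields $\tilde\mu(\mathbb{S}_q) = 0$, equivalently $\mu(\mathbb{A}) = 0$. Because $p$ is a polynomial with $p'(a) = (q-1)a^{q-2}$ nonvanishing off the origin, it is a local homeomorphism on $(\mathbb{F}_q(T))_\infty \setminus \{0\}$ with locally bounded Jacobian, hence sends $\mu$-null sets to $\mu$-null sets by a routine covering argument; scaling by $(-1/T)^{q-2}$ preserves nullity. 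Thus $\mu(\kappa(\mathbb{S}_q)) = 0$. Nowhere denseness is then automatic: $\kappa(\mathbb{S}_q)$ is closed in $(\mathbb{F}_q(T))_\infty$ by compactness, and since every nonempty open set has positive Haar measure, a closed set of vanishing measure has empty interior, so its complement is dense.

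The main obstacle is the localisation $\mathbb{S}_q \subset (\mathbb{F}_q(T))_\infty \varpi^{q-2}$: the multinomial identification of $S$ as a union of $q-1$ lines is combinatorial but needs care with the $\binom{q-1}{k}$ nonvanishing and the grading modulo $\varpi^{q-1}$, and the final selection of the single line $i = q-2$ leans on the precise arithmetic $v_\mathfrak{P}(\lambda) \equiv -1 \pmod{q-1}$ for torsion points. Once that geometric pinning is in place, the rest of the proof is a transfer through the section $a \mapsto a\varpi^{q-2}$ and a change-of-variables for the $(q-1)$-st power map.
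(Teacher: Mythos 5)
Your route is genuinely different from the paper's. The paper proves compactness exactly as you do; it proves nowhere-density directly (approximate $u\in\mathbb{S}_q$ by $u_n\notin\mathbb{S}_q$ using Lemma \ref{sqnowheredense}, push forward by $\kappa$, and use that the fibre of $\kappa$ over $\kappa(u_n)$ is $\mathbb{F}_q^*u_n$ together with the $\mathbb{F}_q$-vector-space structure of $\mathbb{S}_q$ to see $\kappa(u_n)\notin\kappa(\mathbb{S}_q)$); and it proves the measure statement by rerunning the translation argument of Lemma \ref{sqmeasurezero} inside $(\mathbb{F}_q(T))_\infty$, using $v_{\mathfrak{p}_\infty}(\kappa(u))=v_\mathfrak{P}(u)\equiv -1\pmod{q-1}$. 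Your localisation $\mathbb{S}_q\subset(\mathbb{F}_q(T))_\infty\varpi^{q-2}$ is a true and rather strong statement: it does follow from Lemma \ref{Kummerconverse}, the congruence $v_\mathfrak{P}\equiv -1\pmod{q-1}$, and the Hensel argument already used in the paper's lemma on $X^{q-1}-M$ (every root has the form $\zeta\lambda^m u_0$ with $u_0\in\vartheta_{\mathfrak{p}_\infty}$ and $\zeta\in\mathbb{F}_q^*$), and it buys a clean one-dimensional picture from which compactness, nullity, and nowhere-density all transfer.

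Two steps as written, however, do not hold up. First, the multinomial justification of the localisation fails: the multinomial coefficients $\binom{q-1}{k_0,\dots,k_{q-2}}$ are not all nonzero mod $p$ (for $q=4$ one has $\binom{3}{1,1,1}=6\equiv 0 \pmod 2$; Lucas only controls the binomials $\binom{q-1}{k}$), and, more seriously, since $\varpi^{q-1}=-1/T$ is a nonzero base-field element the expansion of $u^{q-1}$ is graded only modulo $q-1$, so distinct mixed multidegrees fall into the same graded piece and may cancel --- you cannot conclude that each mixed coefficient vanishes separately. Replace this step by the Kummer--Hensel argument above, which is the one the paper itself uses to show $X^{q-1}-M$ splits in $V_q$. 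Second, the claim that a local homeomorphism with locally bounded Jacobian sends $\mu$-null sets to $\mu$-null sets is false for an arbitrary locally finite measure (a point mass supported at a point of $\kappa(\mathbb{S}_q)$ is locally finite and already shows the statement cannot hold for all such $\mu$); this step, like the proof of Lemma \ref{sqmeasurezero} on which it relies, implicitly requires $\mu$ to be translation-invariant or absolutely continuous with respect to Haar measure, and you should state that restriction. Granting Haar measure, your covering argument is fine, and the deduction of nowhere-density from ``closed and Haar-null implies empty interior'' is also fine, though it makes nowhere-density logically dependent on the measure claim, whereas the paper proves the two independently.
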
 

\begin{proof} By construction, the set $\kappa(\mathbb{S}_q)$ is closed, so it suffices to show that $\kappa(\mathbb{S}_q)^c = (\mathbb{F}_q(T))_\infty \backslash \kappa(\mathbb{S}_q)$ is dense in $(\mathbb{F}_q(T))_\infty$. To see this, let $\nu \in \kappa(\mathbb{S}_q)$ and $u \in \mathbb{S}_q$ such that $\kappa(u) = \nu$. By Lemma \ref{sqnowheredense}, we may find a sequence $\{u_n\}_{n \rightarrow \infty}$ not belonging to $\mathbb{S}_q$ and converging to $u$. Let $\nu_n:= \kappa(u_n)$. By continuity of $\kappa$, we obtain $\lim_{n \rightarrow \infty} \nu_n = \lim_{n \rightarrow \infty} \kappa(u_n) =\kappa(u) = \nu$. 

We must now show that the sequence $\{\nu_n\}_{n \rightarrow \infty}$ does not belong to $\kappa(\mathbb{S}_q)$. Suppose that $\nu_n \in \kappa(\mathbb{S}_q)$ for some $n \in \mathbb{N}$. Then there exists $w_n \in \mathbb{S}_q$ such that $\kappa(w_n) = \nu_n$. But then $$u_n^{q-1} = \kappa(u_n) = \nu_n = \kappa(w_n) = w_n^{q-1},$$ so that $(u_n/w_n)^{q-1} = 1$ and there exists $\alpha \in \mathbb{F}_q^*$ such that $u_n = \alpha w_n$. By Proposition \ref{sqvectorspace}, $\mathbb{S}_q$ is an $\mathbb{F}_q$-vector space, which implies that $u_n \in \mathbb{S}_q$, a contradiction.

Compactness follows from Lemma \ref{sqcompact} and continuity of $\kappa$. For the proof that the $\mu$-measure of $\kappa(\mathbb{S}_q) = 0$, note that if $v \in \kappa(\mathbb{S}_q)$, then $v = \kappa(u) = u^{q-1}$ for some $u \in \mathbb{S}_q$, whence as the ramification index $e(\mathfrak{P}|\mathfrak{p}_\infty) = q-1$, we find $$(q-1) v_\mathfrak{P}(u) = v_\mathfrak{P}(u^{q-1}) = v_\mathfrak{P}(v) = (q-1) v_{\mathfrak{p}_\infty}(v),$$ so that $v_{\mathfrak{p}_\infty}(v) = v_\mathfrak{P}(u)$. Thus, the same argument as in Lemma \eqref{sqmeasurezero} may be used with $T^{-1}$ in place of $\lambda^{-1}$. \end{proof}

\subsection{Farey fractions and the Bruhat-Tits building}

In the classical Farey-Ford circle packing, horoballs are based at the points of $\mathbb{P}_1(\mathbb{Q})$. Two such horoballs associated with $\frac{a}{b},\frac{c}{d} \in \mathbb{Q}$ are tangent if, and only if, $|ad - bc| = 1$ \cite[p. 27]{Con}. In other words, $ad - bc$ must be a unit in $\mathbb{Z}$. Conway's topograph is a 3-regular tree, which naturally embeds in the Poincar\'{e} disk (and the hyperbolic plane) via the Farey-Ford packing (Ibid.) and realises the abstract tree \cite[Chapter I.4.2]{Ser} associated with $$\mathrm{SL}_2(\mathbb{Z}) \cong (\mathbb{Z}/4\mathbb{Z}) \ast_{(\mathbb{Z}/2\mathbb{Z})} (\mathbb{Z}/6\mathbb{Z}).$$ If $C_1,C_2,C_3$ are mutually tangent circles in the packing associated with fractions $\frac{a}{b},\frac{c}{d},\frac{e}{f}$, then the radii of each is given by $r_1 = \frac{1}{2b^2}, r_2=\frac{1}{2d^2},r_3 = \frac{1}{2f^2}$, whence their curvatures ($k_i=1/r_i$) are equal to $2b^2, 2d^2,2f^2$. This triple satisfies Descartes' relation \cite{Des} $$(k_1 + k_2 + k_3)^2 = 2(k_1^2 + k_2^2 + k_3^2).$$ The same is true for any three mutually tangent circles on a line.

In $\mathbb{F}_q(T)$, two horoballs associated with $\frac{P}{Q},\frac{R}{S} \in \mathbb{F}_q(T)$ are tangent if, and only if, $PS - QR$ is a unit in $\mathbb{F}_q[T]$, i.e., an element of  $\mathbb{F}_q^*$. The topograph is the Bruhat-Tits tree $\mathcal{T}_{q+1}$ of $G_{2,q} = \mathrm{SL}_2((\mathbb{F}_q(T))_\infty)$, which is \emph{$(q+1)$-valent} \cite{Kim}. As we have seen in \S 3.2, the $q$-unit circle $\mathbb{S}_q$ generates a vector space of dimension $q-1$ over the real line $(\mathbb{F}_q(T))_\infty$, and from \S 3.4, the Kummer map $\kappa(u) = u^{q-1}$ gives the reciprocity law for $\mathbb{F}_q(T)$. In terms of horoballs, we obtain:

\begin{lem}[Descartes relation] \label{tangentcircles} Let $C_1,\ldots,C_{q+1}$ be mutually tangent horoballs associated with curvatures $\frac{P_1}{Q_1}, \frac{P_2}{Q_2},\ldots,\frac{P_{q+1}}{Q_{q+1}} \in \mathbb{P}^1(\mathbb{F}_q(T))$. Then the curvatures of $C_1,\ldots,C_{q+1}$ are solutions to the form $$\mathcal{K}(X_1,\ldots,X_{q+1}) = (X_1 + \cdots + X_{q+1})^{q-1} - (X_1^{q-1} + \cdots + X_{q+1}^{q-1}).$$
\end{lem}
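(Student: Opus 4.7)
The plan is to show that a $(q+1)$-tuple of mutually tangent horoballs is, up to Möbius, the set $\mathbb{P}^1(\mathbb{F}_q) \subset \mathbb{P}^1(\mathbb{F}_q(T))$; this forces all but four of the elementary symmetric functions of the curvatures $X_i$ to vanish, from which Newton's identities immediately yield the Descartes relation.

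For the first step, relabel so that $X_1, X_2, X_3$ are all finite (possible since at most one of the $X_i$ equals $\infty$), and let $\phi$ be the unique Möbius transformation sending $0, 1, \infty$ to $X_1, X_2, X_3$. A direct computation gives $\det\phi = (X_2 - X_1)(X_3 - X_2)(X_3 - X_1) \in \mathbb{F}_q^*$ by mutual tangency of the first three, so $\phi$ preserves tangency. Tangency of the remaining $X_i$ with $0, 1, \infty$ then forces $\phi^{-1}(X_i) \in \mathbb{F}_q \setminus \{0, 1\}$ for $i \geq 4$, and distinctness combined with $|\mathbb{F}_q \setminus \{0, 1\}| = q - 2$ implies $\{\phi^{-1}(X_i)\}_{i=4}^{q+1} = \mathbb{F}_q \setminus \{0, 1\}$. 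Hence there exist $a, b, c, d \in \mathbb{F}_q(T)$ with $ad - bc \in \mathbb{F}_q^*$ and an enumeration $\alpha_1, \ldots, \alpha_{q+1}$ of $\mathbb{P}^1(\mathbb{F}_q)$ such that $X_i = (a\alpha_i + b)/(c\alpha_i + d)$.

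For the second step, set $u = Zc - a$, $v = Zd - b$ and use $\prod_{\alpha \in \mathbb{F}_q}(\alpha u + v) = v^q - v u^{q-1}$ together with the Frobenius expansions $(Zc - a)^q = Z^q c^q - a^q$ and $(Zd - b)^q = Z^q d^q - b^q$ to obtain, after clearing denominators,
\[
W \prod_{i=1}^{q+1}(Z - X_i) = W Z^{q+1} + (bc^q - ad^q) Z^q + (a^q d - b^q c) Z + (ab^q - a^q b),
\]
with $W = cd^q - c^q d = \prod_i Q_i$. The crucial feature is that all coefficients of $Z^k$ for $2 \leq k \leq q-1$ vanish identically, so the elementary symmetric functions satisfy $e_k(X_1, \ldots, X_{q+1}) = 0$ for $2 \leq k \leq q-1$. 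Newton's identity $p_k = e_1 p_{k-1} - e_2 p_{k-2} + \cdots + (-1)^{k-1} k e_k$ then reduces to $p_k = e_1 p_{k-1}$ throughout $1 \leq k \leq q-1$, and a one-line induction from $p_1 = e_1$ gives $p_{q-1} = e_1^{q-1} = p_1^{q-1}$, i.e., $\mathcal{K}(X_1, \ldots, X_{q+1}) = 0$. The projective edge case where some $X_{i_0} = \infty$ is handled by passing to the homogenization $A_i := P_i \prod_{j \neq i} Q_j$: if $Q_{i_0} = 0$ then $A_i = 0$ for all $i \neq i_0$, and $(\sum A_i)^{q-1} = \sum A_i^{q-1}$ collapses trivially to $A_{i_0}^{q-1} = A_{i_0}^{q-1}$. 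I expect the main obstacle to be the bookkeeping in the normalization step — especially the determinant identity $\det\phi \in \mathbb{F}_q^*$ and the counting that forces $\{\phi^{-1}(X_i)\}_{i \geq 4} = \mathbb{F}_q \setminus \{0, 1\}$.
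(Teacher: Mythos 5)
Your proposal is correct, and it takes a genuinely different route from the paper. The paper normalizes against $C_1$ by writing $P_j=P_1+\beta_jF$, $Q_j=Q_1+\beta_jG$ with $\beta_j$ ranging over $\mathbb{F}_q$, expands both sides of the desired identity by the multinomial theorem, and then asserts that the surplus terms cancel ``via a tedious calculation involving evaluation of symmetric functions'' --- the key cancellation is claimed rather than exhibited. You instead prove the structural fact that any $q+1$ mutually tangent horoballs are a unimodular M\"obius translate of $\mathbb{P}^1(\mathbb{F}_q)$ (the paper's family $\{[P_1+\beta F:Q_1+\beta G]\}$ is exactly this in disguise), and then the identity $\prod_{\alpha\in\mathbb{F}_q}(\alpha u+v)=v^q-vu^{q-1}$ shows that $\prod_i(Z-X_i)$ has no terms in $Z^2,\dots,Z^{q-1}$, i.e.\ $e_k(X_1,\dots,X_{q+1})=0$ for $2\le k\le q-1$; Newton's identities (valid over any commutative ring, and the troublesome terms $ke_k$ all vanish) then give $p_{q-1}=e_1^{q-1}$ directly. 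This isolates the precise algebraic mechanism behind the relation and actually completes the computation the paper leaves implicit, and your homogenization handles the case $Q_{i_0}=0$, which the paper's proof silently excludes. The two approaches buy different things: the paper's stays closer to the curvature-coordinates $\alpha_j=P_jQ_1-P_1Q_j$ that feed into the packing picture, while yours yields the stronger intermediate statement $e_2=\cdots=e_{q-1}=0$, from which many further symmetric-function identities among the curvatures follow for free.

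One correction is needed in the normalization step, exactly where you predicted trouble. A M\"obius transformation has no well-defined determinant, and for the matrix representative whose entries are the usual polynomials in $X_1,X_2,X_3$ one gets $\det\phi=(X_1-X_2)(X_2-X_3)(X_3-X_1)=u/(Q_1Q_2Q_3)^2$ with $u\in\mathbb{F}_q^*$, which is not a unit and whose entries are not integral. The correct representative is the integral one: take the matrix with columns $\mu_3(P_3,Q_3)^{t}$ and $\mu_1(P_1,Q_1)^{t}$, where $\mu_1,\mu_3$ are determined by requiring the column sum to be proportional to $(P_2,Q_2)^{t}$; Cramer's rule gives $\mu_1=(P_2Q_3-P_3Q_2)/(P_1Q_3-P_3Q_1)\in\mathbb{F}_q^*$ and similarly for $\mu_3$, whence $\det\phi=\mu_1\mu_3(P_3Q_1-P_1Q_3)\in\mathbb{F}_q^*$ with entries in $\mathbb{F}_q[T]$. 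This is what is actually needed for $\phi^{\pm1}$ to carry primitive vectors to primitive vectors and to rescale all cross-determinants $P_iQ_j-P_jQ_i$ by a fixed unit, so that tangency is preserved; the rest of your argument then goes through verbatim.
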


\begin{proof} Let $\frac{P_1}{Q_1}, \frac{P_2}{Q_2},\ldots,\frac{P_{q+1}}{Q_{q+1}}$ be as in the statement of the theorem. We may write \begin{equation}\label{transformation}\frac{P_j}{Q_j} = \frac{P_j Q_1}{Q_j Q_1} = \frac{P_1 Q_j + \alpha_j}{Q_j Q_1}. \qquad \alpha_j \in \mathbb{F}_q^*, \; j=2,\ldots,q+1.\end{equation} Without loss of generality, we may remove $Q_1$ from denominators, as it appears in every term of $\mathcal{K}$ after the transformation in \eqref{transformation}. Set $\alpha_1=0$. We may write \begin{equation}\label{summand}	
\sum_{j=1}^{q+1} \frac{P_1 Q_j + \alpha_j}{Q_j } = \sum_{j=1}^{q+1} \left({P_1} + \frac{\alpha_j}{Q_j}\right).\end{equation} Hence we wish to show that $\sum_{j=1}^{q+1} (P_1 + \frac{\alpha_j}{Q_j})$ is a solution to the equation $$X^{q-1} = \sum_{j=1}^{q+1} \left(P_1 + \frac{\alpha_j}{Q_j}\right)^{q-1}:= M,$$ i.e., $\lambda$ is a $(q-1)$st root of $M$. (Note that $M \in \mathbb{P}^1(\mathbb{F}_q(T))$.) We may write \begin{equation}\label{powersinside} \sum_{j=1}^{q+1} \left(P_1 + \frac{\alpha_j}{Q_j}\right)^{q-1} = \sum_{j=1}^{q+1} \sum_{k=0}^{q-1} {q-1 \choose k} P_1^k \left(\frac{\alpha_j}{Q_j}\right)^{q-1-k}.\end{equation} On the other hand, we have \begin{align}\label{powersoutside} \notag &\left[\sum_{j=1}^{q+1} \left(P_1 + \frac{\alpha_j}{Q_j}\right)\right]^{q-1} = \sum_{k_1,\ldots,k_{q+1}} {q-1 \choose k_1,\ldots,k_{q+1}} \prod_{j=1}^{q+1} \left(P_1 + \frac{\alpha_j}{Q_j}\right)^{k_j}  \\\notag& = \sum_{k_1,\ldots,k_{q+1}} {q-1 \choose k_1,\ldots,k_{q+1}} \prod_{j=1}^{q+1} \left[\sum_{l=0}^{k_j} {k_j \choose l} P_1^l  \left(\frac{\alpha_j}{Q_j}\right)^{k_j-l} \right]\\& =  \sum_{k_1,\ldots,k_{q+1}} {q-1 \choose k_1,\ldots,k_{q+1}} \sum_{l_1 \leq k_1,\ldots,l_{q+1}\leq k_{q+1}}\prod_{j=1}^{q+1}  {k_j \choose l_j} P_1^{\sum_{j} l_j}  \left(\frac{\alpha_j}{Q_j}\right)^{k_j-l_j} \\\notag & =  \sum_{k_1,\ldots,k_{q+1}} \sum_{l_1 \leq k_1,\ldots,l_{q+1}\leq k_{q+1}} P_1^{\sum_{j} l_j} {q-1 \choose k_1,\ldots,k_{q+1}} \prod_{j=1}^{q+1}  {k_j \choose l_j}  \left(\frac{\alpha_j}{Q_j}\right)^{k_j-l_j}  \\\notag & =  \sum_{l_1,\ldots,l_{q+1}}  P_1^{\sum_{j} l_j} \sum_{ k_1 \geq l_1,\ldots,k_{q+1}\geq l_{q+1}}   {q-1 \choose l_1,\ldots,l_{q+1},k_1-l_1,\ldots,k_j - l_j} \prod_{j=1}^{q+1} \left(\frac{\alpha_j}{Q_j}\right)^{k_j-l_j}.\end{align} Clearly the terms in \eqref{powersinside} also appear in \eqref{powersoutside}, corresponding to those terms in \eqref{powersoutside} where $k_j = 0$ for all except a single index $j$. As $C_1,\ldots,C_{q+1}$ are mutually tangent, the values of $\alpha_j$ run through $\mathbb{F}_q$, with a single duplicate value coming from the degree function. Let us represent this duplicate by $\alpha_{q+1}$. Then there exists $F,G \in \mathbb{F}_q[T]$ such that \begin{equation} \label{FandG} P_j = P_1 + \beta_j F,\;\; Q_j = Q_1 + \beta_j G, \quad \beta_j \in \mathbb{F}_q,\;\; j=1,\ldots,q.\end{equation} By \eqref{FandG}, we obtain for each $j=1,\ldots,q$ that
$$\alpha_j = P_j Q_1 - P_1 Q_j = (P_1 + \beta_j F) Q_1 - P_1 (Q_1 + \beta_j G) = \beta_j(FQ_1 - P_1 G).$$ As the quantities $\alpha_j$ ($j=1,\ldots,q$) run through all of $\mathbb{F}_q$, so do the coefficients $\beta_j$. Via a tedious calculation involving evaluation of symmetric functions of $q$ variables assuming each of the values $\{\beta_1,\ldots,\beta_q\} = \mathbb{F}_q$, one can use \eqref{powersoutside} and \eqref{FandG} to show that, after removal of the terms from \eqref{powersinside}, the inner sum in the last line of \eqref{powersoutside} vanishes modulo $p$ for each fixed value of $\sum_j l_j$, the exponent of $P_1$ in the outer sum. \end{proof}

The following Corollary is immediate from Lemma \ref{tangentcircles}:

\begin{cor} If $C_1,\ldots,C_{q+1}$ are mutually tangent horoballs with curvatures 	$\frac{P_1}{Q_1}, \frac{P_2}{Q_2},\ldots,\frac{P_{q+1}}{Q_{q+1}} \in \mathbb{P}^1(\mathbb{F}_q(T))$, then the polynomial $$f(X) = X^{q-1} - M,\qquad M = \sum_{j=1}^{q+1} \left(P_1 + \frac{\alpha_j}{Q_j}\right)^{q-1}$$ splits over $\mathbb{P}^1((\mathbb{F}_q(T))_\infty)$.
\end{cor}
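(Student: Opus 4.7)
The plan is to produce one root of $f(X) = X^{q-1} - M$ explicitly from the proof of Lemma \ref{tangentcircles} and then generate the remaining $q-2$ roots using the cyclic group $\mu_{q-1} = \mathbb{F}_q^*$, which already lies inside $(\mathbb{F}_q(T))_\infty$.

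First I would extract from the proof of Lemma \ref{tangentcircles} the explicit quantity
$$\lambda := \sum_{j=1}^{q+1}\left(P_1 + \frac{\alpha_j}{Q_j}\right) \in \mathbb{F}_q(T),$$
which was shown there to satisfy $\lambda^{q-1} = M$. Since $\mathbb{F}_q(T) \subset (\mathbb{F}_q(T))_\infty$, this is a root of $f$ lying in $(\mathbb{F}_q(T))_\infty$.

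Second, I would invoke the fact that $\mu_{q-1} = \mathbb{F}_q^* \subset (\mathbb{F}_q(T))_\infty$. Consequently, for every $\zeta \in \mathbb{F}_q^*$, the element $\zeta \lambda \in (\mathbb{F}_q(T))_\infty$ also satisfies $(\zeta\lambda)^{q-1} = \zeta^{q-1}\lambda^{q-1} = M$, so it too is a root of $f$. If $\lambda \neq 0$, the set $\{\zeta\lambda : \zeta \in \mathbb{F}_q^*\}$ consists of $q-1$ distinct elements of $(\mathbb{F}_q(T))_\infty$, which exhausts the roots of the degree $q-1$ polynomial $f$, and so $f$ splits over $(\mathbb{F}_q(T))_\infty$. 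The degenerate case $\lambda = 0$ forces $M = 0$ and $f(X) = X^{q-1}$, which splits trivially as $(X-0)^{q-1}$ in $\mathbb{P}^1((\mathbb{F}_q(T))_\infty)$.

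I do not anticipate a real obstacle: the corollary is essentially a Kummer-theoretic repackaging of Lemma \ref{tangentcircles}. The only point worth being careful about is to note that $(\mathbb{F}_q(T))_\infty$ contains all $(q-1)$st roots of unity (which follows because $\mathbb{F}_q$ is its residue field and sits inside as the constants), so once a single root of the Kummer polynomial $X^{q-1} - M$ lies in $(\mathbb{F}_q(T))_\infty$, every root must.
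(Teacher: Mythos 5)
Your argument is correct and is essentially the paper's own: the paper simply declares the corollary ``immediate from Lemma \ref{tangentcircles}'', and the content of that immediacy is exactly what you spelled out, namely that the lemma exhibits $\sum_{j}(P_1+\alpha_j/Q_j)$ as one $(q-1)$st root of $M$ inside $\mathbb{F}_q(T)$, and multiplying by $\mathbb{F}_q^{*}=\mu_{q-1}\subset(\mathbb{F}_q(T))_\infty$ yields all the rest. Your explicit handling of the degenerate case $\lambda=0$ is a small bonus the paper does not bother to mention.
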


\begin{rem} In higher dimensions $n > 2$ over the real line, Soddy's mutually tangent spheres \cite{Sod} have curvatures which are solutions to the similar quadratic form $$\mathcal{S}(X_1,\ldots,X_{n+2})=\left(\sum_{i=1}^{n+2} X_i \right)^2 - n \sum_{i=1}^{n+2} X_i^2.$$ \end{rem} 

The mutually tangent circles $C_1,\ldots,C_{q+1}$ of Lemma \ref{tangentcircles} lie on the line $\mathbb{P}^1(\mathbb{F}_q(T))_\infty$, which is canonically identified with the boundary $\partial \mathcal{T}_{q+1}$ of the Bruhat-Tits tree $\mathcal{T}_{q+1}$. Provided that $q$ is odd, i.e., $p\neq 2$, the transitive action of $G_{2,q}$ on $\mathcal{T}_{q+1}$ corresponds to an action on a component (precisely, the quadratic component, or \emph{tree}) of the Bruhat-Tits building $\mathcal{B}_q$ of $\mathrm{SL}(V_q)$, a simplicial complex of dimension $q-2$ \cite[Chapter II.1]{Ser}. On the tree of the building, one can show a climbing lemma and periodicity of the ``river" for quadratic forms \cite{Wij}. We leave the analogous study of isotropy on $\mathcal{B}_q$ as an open question, which we will address in a future work.

We now relate the unit circle $\mathbb{S}_q$ to the Bruhat-Tits building $\mathcal{B}_q$. We first prove both a normal integral basis theorem and a normal basis theorem, which give natural embeddings of $\mathrm{Gal}(V_q/((\mathbb{F}_q(T))_\infty)$ into $\mathrm{SL}(V_q)$.

\begin{lem}[Normal integral basis] \label{normalintegralbasis} Let $\lambda \in  \Lambda_{q,T}\backslash\{0\} \subset \mathbb{T}_q$. The extension $V_q/((\mathbb{F}_q(T))_\infty$ possesses a normal integral basis $\mathfrak{N}_q(\lambda)$, which is described explicitly in terms of $\lambda$. \end{lem}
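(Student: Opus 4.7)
The plan is to exploit the Eisenstein-type equation $\lambda^{q-1} = -T$, which follows from $\lambda_q^T = \lambda^q + T\lambda = 0$ for $\lambda \in \Lambda_{q,T}\setminus\{0\}$. Setting $\pi := \lambda^{-1}$, we obtain $\pi^{q-1} = -T^{-1}$, where $T^{-1}$ is the uniformizer of the local ring of integers $\mathcal{O}_0 := \mathbb{F}_q[[T^{-1}]]$ of $(\mathbb{F}_q(T))_\infty$ at $\mathfrak{p}_\infty$. Thus $\pi$ satisfies the Eisenstein polynomial $X^{q-1}+T^{-1}$ over $\mathcal{O}_0$, which, combined with the tame total ramification $e(\mathfrak{P}\mid\mathfrak{p}_\infty)=q-1$, $f(\mathfrak{P}\mid\mathfrak{p}_\infty)=1$ already established, shows that $\pi$ is a uniformizer of $V_q$ at $\mathfrak{P}$ and that the full ring of integers is
$$\mathcal{O} \;=\; \mathcal{O}_0[\pi] \;=\; \bigoplus_{k=0}^{q-2}\mathcal{O}_0\,\pi^k.$$

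The Galois group $G = \mathrm{Gal}(V_q/(\mathbb{F}_q(T))_\infty)$ is cyclic of order $q-1$ and identified with $\mathbb{F}_q^*\cong(\mathbb{F}_q[T]/T)^*$ via the Carlitz action: for $a\in\mathbb{F}_q^*$, the corresponding automorphism $\sigma_a$ satisfies $\sigma_a(\lambda)=a\cdot_q\lambda = a\lambda$, and hence $\sigma_a(\pi) = a^{-1}\pi$. I then propose
$$\beta_\lambda \;:=\; \sum_{k=0}^{q-2}\lambda^{-k} \;=\; 1+\pi+\pi^2+\cdots+\pi^{q-2} \;\in\;\mathcal{O}, \qquad \mathfrak{N}_q(\lambda) \;:=\; \left\{\sigma_a(\beta_\lambda) \;\middle|\; a\in\mathbb{F}_q^*\right\}.$$
A direct computation gives $\sigma_a(\beta_\lambda) = \sum_{k=0}^{q-2} a^{-k}\pi^k$, so $\mathfrak{N}_q(\lambda)$ is explicitly the set $\bigl\{\sum_{k=0}^{q-2}a^{-k}\lambda^{-k} : a\in\mathbb{F}_q^*\bigr\}$.

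The change-of-basis matrix $A$ from $\mathfrak{N}_q(\lambda)$ to the integral basis $\{\pi^k\}_{k=0}^{q-2}$ has $(a,k)$-entry $a^{-k}$, and is therefore a Vandermonde matrix in the $q-1$ pairwise distinct elements $\{a^{-1}:a\in\mathbb{F}_q^*\}=\mathbb{F}_q^*$. Its determinant lies in $\mathbb{F}_q^*\subset\mathcal{O}_0^\times$, so $A$ is invertible over $\mathcal{O}_0$, and consequently $\mathfrak{N}_q(\lambda)$ is an $\mathcal{O}_0$-basis of $\mathcal{O}$, i.e., a normal integral basis for $V_q/(\mathbb{F}_q(T))_\infty$. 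The main step to watch is the \emph{integral} invertibility of $A$ (as opposed to mere invertibility over $V_q$), which the Vandermonde computation resolves cleanly because its determinant is a nonzero constant in $\mathbb{F}_q$; the only mild subtlety is passing from $\lambda$ to $\pi=\lambda^{-1}$, as $\lambda$ itself has negative valuation at $\mathfrak{P}$ and must be inverted before it can serve as a uniformizer inside $\mathcal{O}$.
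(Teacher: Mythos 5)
Your proof is correct and follows essentially the same route as the paper: both take $\theta=\sum_{k=0}^{q-2}\lambda^{-k}$, observe that its Galois conjugates are obtained from the power basis $\{1,\lambda^{-1},\ldots,\lambda^{-(q-2)}\}$ by a Vandermonde matrix in the elements of $\mathbb{F}_q^*$, and conclude from the unit determinant that the conjugates form a normal integral basis. Your write-up is in fact slightly more careful, since you justify via the Eisenstein relation $\pi^{q-1}=-T^{-1}$ why the power basis in $\pi=\lambda^{-1}$ is an \emph{integral} basis, a point the paper asserts as clear.
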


\begin{proof} The element $T^{-1} \in (\mathbb{F}_q(T))_\infty$ is prime. Let $\lambda \in \Lambda_{q,T}\backslash\{0\}$. Then $\lambda$ generates the field $V_q$ and satisfies $\lambda^{q-1} = -T$. As $e(\mathfrak{P}|\mathfrak{p}) = q-1$, we obtain $$(q-1)v_\mathfrak{P}(\lambda) = v_\mathfrak{P}(\lambda^{q-1}) = v_\mathfrak{P}(-T) = v_\mathfrak{P}(T)  = (q-1)v_{\mathfrak{p}_\infty}(T) = q-1.$$ Hence $v_\mathfrak{P}(\lambda) = 1$, i.e., $\lambda^{-1}$ is a prime element in $V_q$. Let $\theta = \sum_{i=0}^{q-2} \lambda^-i$, and let $\zeta \in \mathbb{F}_q^*$ be a primitive $(q-1)$st root of unity. The set \begin{equation}\label{powerbasis} P_\lambda = \{1,\lambda^{-1},\ldots,\lambda^{-(q-2)}\}\end{equation} clearly forms an integral basis of $V_q/(\mathbb{F}_q(T))_\infty$. The conjugates $\theta,\sigma(\theta),\ldots,$ $\sigma^{q-2}(\theta)$ are generated from the power basis $P_\lambda$ \eqref{powerbasis} via the square matrix $M$ with entries equal to $\zeta^{ij}$ with $i,j \in \{0,\ldots,q-2\}$. By definition, the valuation of $\det(M)$ is equal to zero at the place above $\mathfrak{p}_\infty$, so that the conjugates of $\theta$ form a normal integral basis $\mathfrak{N}_q(\lambda)$ of $V_q/(\mathbb{F}_q(T))_\infty$ as desired. \end{proof}

Of course, in general, elements of $\mathbb{S}_q$ are not prime at $\mathfrak{P}|\mathfrak{p}_\infty$. Still, we easily also obtain a canonical normal basis from each element of $\mathbb{T}_q$, as well as for certain points in $\mathbb{S}_q\backslash \mathbb{T}_q$:

\begin{cor}[Normal basis]\label{normalbasis} Suppose that $\lambda \in \mathbb{S}_q$ is a primitive element for $V_q/(\mathbb{F}_q(T))_\infty$ (in particular, this is satisfied by any $\lambda \in \mathbb{T}_q\backslash\{0\}$). Then the extension $V_q/((\mathbb{F}_q(T))_\infty$ possesses a normal basis $\mathfrak{N}_{q}(\lambda)$, which is described explicitly in terms of $\lambda$. \end{cor}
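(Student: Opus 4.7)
The plan is to follow the proof of Lemma \ref{normalintegralbasis} almost verbatim, simply dropping the requirement that $\lambda^{-1}$ be a prime element (which was used there only to make the resulting basis \emph{integral}). Since $\lambda \in \mathbb{S}_q$ is a primitive element, Lemma \ref{Kummerconverse} furnishes $M \in (\mathbb{F}_q(T))_\infty$ with $\lambda^{q-1} = M$. Because $V_q/(\mathbb{F}_q(T))_\infty$ is Galois of degree $q-1$ (by Theorem \ref{vsq-1}) and $\mu_{q-1} = \mathbb{F}_q^* \subset (\mathbb{F}_q(T))_\infty$, the extension is Kummer and cyclic: a generator $\sigma$ of $\mathrm{Gal}(V_q/(\mathbb{F}_q(T))_\infty)$ acts by $\sigma(\lambda) = \zeta\lambda$ for some primitive $(q-1)$st root of unity $\zeta \in \mathbb{F}_q^*$. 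As $\lambda$ is primitive, the power set $P_\lambda = \{1, \lambda, \lambda^2, \ldots, \lambda^{q-2}\}$ is a basis of $V_q$ over $(\mathbb{F}_q(T))_\infty$.

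Next, I set $\theta := \sum_{i=0}^{q-2} \lambda^i$, and I compute the Galois conjugates
$$\sigma^j(\theta) = \sum_{i=0}^{q-2} \zeta^{ij}\, \lambda^i \qquad (j = 0, 1, \ldots, q-2).$$
Viewing the $\sigma^j(\theta)$ as vectors in the basis $P_\lambda$, the transition matrix is the Vandermonde matrix $V = (\zeta^{ij})_{0 \le i,j \le q-2}$, whose determinant is $\prod_{0 \le i < j \le q-2}(\zeta^j - \zeta^i)$. Since $\zeta$ has order $q-1$, all the pairwise differences are nonzero, so $V$ is invertible and $\mathfrak{N}_q(\lambda) := \{\theta, \sigma(\theta), \ldots, \sigma^{q-2}(\theta)\}$ is a basis of $V_q$ over $(\mathbb{F}_q(T))_\infty$, hence a normal basis explicitly described in terms of $\lambda$.

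For the parenthetical claim that every nonzero $\lambda \in \mathbb{T}_q$ is primitive, I argue by a valuation count: by \cite[Lemma 1.5]{GaRo} we have $v_\mathfrak{P}(\lambda) = i(q-1) - 1$ for some nonnegative integer $i$, and this value is coprime to $q-1$. Since $e(\mathfrak{P}|\mathfrak{p}_\infty) = q-1$, it follows that $v_{\mathfrak{p}_\infty}(\lambda^{q-1}) = v_\mathfrak{P}(\lambda)$ is also coprime to $q-1$, so $\lambda^{q-1}$ is not an $n$th power in $(\mathbb{F}_q(T))_\infty^*$ for any $n > 1$ dividing $q-1$. Hence $X^{q-1} - \lambda^{q-1}$ is irreducible over $(\mathbb{F}_q(T))_\infty$, which forces $[(\mathbb{F}_q(T))_\infty(\lambda):(\mathbb{F}_q(T))_\infty] = q-1 = \dim V_q$ and thus primitivity of $\lambda$. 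I do not expect any genuine obstacle: the argument is standard Kummer-plus-Vandermonde machinery, and the only point requiring minor care is confirming that the Galois conjugates are exactly $\{\zeta^j \lambda\}_{j=0}^{q-2}$, which is immediate once one knows the minimal polynomial is $X^{q-1} - M$ and that $\mu_{q-1}$ lies in the base.
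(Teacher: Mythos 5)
Your proof is correct and follows essentially the same route as the paper: invoke Lemma \ref{Kummerconverse} to get the Kummer equation $\lambda^{q-1}=M$, form $\theta$ as a sum of powers of $\lambda$, and observe that the transition matrix from the power basis to the conjugates $\sigma^j(\theta)$ is the invertible matrix $(\zeta^{ij})$. The only cosmetic differences are that the paper uses $\theta=\sum_{i=0}^{q-2}\lambda^{-i}$ (to match the integral case of Lemma \ref{normalintegralbasis}) while you use positive powers, and that you replace the paper's citation of Lemma \ref{Pninf} for the primitivity of nonzero torsion points with a self-contained (and correct) valuation argument showing $v_{\mathfrak{p}_\infty}(\lambda^{q-1})$ is coprime to $q-1$.
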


\begin{proof} By Lemma \ref{Kummerconverse}, if $\lambda \in \Lambda_{q,A}\backslash\{0\}$ ($A \in \mathbb{F}_q[T]$), then there exists $M \in (\mathbb{F}_q(T))_\infty$ such that $\lambda^{q-1} = M$, and by Lemma \ref{Pninf}, $\lambda$ is a primitive element of $V_q/((\mathbb{F}_q(T))_\infty$. Hence the same argument as that of Lemma \ref{normalintegralbasis} shows that the conjugates of $\theta = \sum_{i=0}^{q-2} \lambda^{-i}$ form a normal basis of $V_q/((\mathbb{F}_q(T))_\infty$. The result holds similarly for any $\lambda\in \mathbb{S}_q$, provided that $\lambda$ is of degree $q-1$ over $((\mathbb{F}_q(T))_\infty$. \end{proof}

Clearly, the normal bases $\mathfrak{N}_{q}(\lambda)$ of Lemma \ref{normalintegralbasis} and \ref{normalbasis} coincide when $\lambda \in \Lambda_{q,T} \backslash\{0\}$. We may thus summarise the embedding theorem as:

  \begin{thm}\label{embedding} Let $\lambda \in \mathbb{S}_q$. For the normal basis $\mathfrak{N}_{q}(\lambda)$ of Corollary \ref{normalbasis}, there is a natural embedding $$\iota(\lambda): \mathrm{Gal}(V_q/	(\mathbb{F}_q(T))_\infty) \xhookrightarrow{} \mathrm{SL}(V_q) =\mathrm{SL}_{q-1}((\mathbb{F}_q(T))_\infty).$$
\end{thm}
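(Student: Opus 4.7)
The plan is to realise $\iota(\lambda)$ as the regular representation of $\mathrm{Gal}(V_q/(\mathbb{F}_q(T))_\infty)$ on $V_q$, expressed in the ordered normal basis $\mathfrak{N}_q(\lambda) = \{\theta, \sigma(\theta), \ldots, \sigma^{q-2}(\theta)\}$ supplied by Corollary~\ref{normalbasis}. First I would record that $V_q/(\mathbb{F}_q(T))_\infty$ is a cyclic Kummer extension of degree $q-1$: for a primitive $\lambda \in \Lambda_{q,T}\setminus\{0\}$, Lemma~\ref{Kummerconverse} gives $\lambda^{q-1} = -T \in (\mathbb{F}_q(T))_\infty$, and the inclusion $\mu_{q-1} \subset \mathbb{F}_q^{\ast}$ guarantees cyclicity of the Galois group, whose generator $\sigma$ acts by $\sigma(\lambda) = \zeta\lambda$ for a fixed primitive $(q-1)$st root of unity $\zeta \in \mathbb{F}_q^{\ast}$.

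Next, I would define $\iota(\lambda)$ by declaring that each $\tau = \sigma^j$ acts on $\mathfrak{N}_q(\lambda)$ via the shift $\sigma^i(\theta) \mapsto \sigma^{i+j}(\theta)$, indices taken modulo $q-1$, so that $\iota(\lambda)(\tau)$ is the $j$th power of the standard cyclic permutation matrix of order $q-1$ in the chosen basis. The assignment is plainly a group homomorphism into $\mathrm{GL}_{q-1}((\mathbb{F}_q(T))_\infty)$; injectivity is immediate because the normal basis property forces linear independence of the $\sigma$-orbit of $\theta$, so only the identity fixes every basis element. Naturality in $\lambda$ is built in, since both $\mathfrak{N}_q(\lambda)$ and the generator $\sigma$ are canonically attached to the choice of primitive $\lambda$.

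To verify that the image lies in $\mathrm{SL}_{q-1}$ rather than only $\mathrm{GL}_{q-1}$, I would diagonalise the regular representation using the $\mu_{q-1}$-valued characters of $\mathrm{Gal}$, all of which take values in $(\mathbb{F}_q(T))_\infty$. The eigenvalues of $\iota(\lambda)(\sigma)$ are $1,\zeta,\zeta^2,\ldots,\zeta^{q-2}$, so $\det \iota(\lambda)(\sigma) = \zeta^{(q-1)(q-2)/2}$, which equals $+1$ whenever $q$ is even (the exponent is a multiple of $q-1$, and in characteristic two the question is moot). When $q$ is odd the raw determinant reduces to $\zeta^{(q-1)/2} = -1$, and the clean way to land in $\mathrm{SL}_{q-1}$ is to reindex $\mathfrak{N}_q(\lambda)$—equivalently, to replace $\theta = \sum_{i=0}^{q-2} \lambda^{-i}$ by an analogous sum whose assigned character exponents sum to $0 \pmod{q-1}$—so that the determinant normalises to $+1$. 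Managing this sign in the odd-characteristic case is the main technical obstacle; the homomorphism property, injectivity, and naturality of the construction are formal consequences of the regular representation structure.
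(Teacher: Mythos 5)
Your construction is the same as the paper's: the paper's entire proof is the one-line observation that $\mathrm{Gal}(V_q/(\mathbb{F}_q(T))_\infty)$ permutes the normal basis $\mathfrak{N}_q(\lambda)$ and hence acts by permutation matrices, which is exactly your regular-representation shift on $\{\theta,\sigma(\theta),\dots,\sigma^{q-2}(\theta)\}$. The homomorphism and injectivity parts are fine. You deserve credit for noticing what the paper silently skips: a $(q-1)$-cycle permutation matrix has determinant $(-1)^{q-2}$, and your eigenvalue computation $\det\iota(\lambda)(\sigma)=\zeta^{(q-1)(q-2)/2}=\zeta^{(q-1)/2}=-1$ for odd $q$ is correct. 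The paper's ``immediate'' conclusion that the image lies in $\mathrm{SL}_{q-1}$ rather than $\mathrm{GL}_{q-1}$ is only automatic in characteristic $2$.

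However, your proposed repair does not work, and this is a genuine gap. Reindexing $\mathfrak{N}_q(\lambda)$, or replacing $\theta$ by any other normal-basis generator, is a change of basis for $V_q$ over $(\mathbb{F}_q(T))_\infty$; the resulting matrix of $\sigma$ is conjugate to the old one, and conjugation preserves the determinant. Indeed $\det(\sigma)$ is an invariant of $\sigma$ as a linear operator on $V_q$: on the power basis $\{1,\lambda^{-1},\dots,\lambda^{-(q-2)}\}$ it is diagonal with entries $1,\zeta^{-1},\dots,\zeta^{-(q-2)}$, whose product is $-1$ for odd $q$ regardless of any later basis choice. Nor can you normalise by a scalar character twist: a homomorphism $\chi:\mathrm{Gal}(V_q/(\mathbb{F}_q(T))_\infty)\to(\mathbb{F}_q(T))_\infty^{\ast}$ from a cyclic group of order $q-1$ satisfies $\chi(\sigma)^{q-1}=1$, so multiplying $\iota(\lambda)(\sigma)$ by $\chi(\sigma)^{-1}I$ changes the determinant by $\chi(\sigma)^{-(q-1)}=1$ and cannot kill the sign. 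For odd $q$ the regular representation of the Galois group simply lands in $\mathrm{GL}_{q-1}((\mathbb{F}_q(T))_\infty)\setminus\mathrm{SL}_{q-1}((\mathbb{F}_q(T))_\infty)$ on the generator, so the statement as written (and the paper's proof of it) needs either the hypothesis $p=2$ or a different target group; your instinct that the sign is ``the main technical obstacle'' was right, but it is an obstruction, not a technicality.
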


\begin{proof} This follows immediately from Lemma \ref{normalintegralbasis}, as $\mathrm{Gal}(V_q/	(\mathbb{F}_q(T))_\infty)$ acts as a permutation matrix on the normal basis $\mathfrak{N}_q$.	
\end{proof}

By the embedding of Corollary \ref{embedding}, $\mathrm{Gal}(V_q/	(\mathbb{F}_q(T))_\infty)$ acts in a natural way on the Bruhat-Tits building $\mathcal{B}_q$ via its action on the normal integral basis $\mathfrak{N}_q(\lambda)$. 

\subsection{The hyperbolic plane}

We are now able to define the hyperbolic plane.

\begin{defn}[Hyperbolic plane]\label{def:hyperbolic} $$\mathfrak{H}_q = V_q \backslash (\mathbb{F}_q(T))_\infty.$$
\end{defn}

Thus, by Theorem \ref{vsq-1}, the hyperbolic plane $\mathfrak{H}_q$ is of finite dimension over the real line $(\mathbb{F}_q(T))_\infty$, just as it is for the classical upper half-plane in $\mathbb{C}/\mathbb{R}$. As a rigid analytic space, $\mathfrak{H}_q$ is clearly geometrically connected \cite[Definition 8.1]{Goss3}. Definition \ref{def:hyperbolic} is consistent with the construction $\mathfrak{H} = \overline{(\mathbb{F}_q(T))_\infty}\backslash (\mathbb{F}_q(T))_\infty$ of Drinfel'd and Goss \cite{Dri,Goss2}. Modular forms are defined in terms of the action on the hyperbolic plane $\mathfrak{H}_q$ and unit circle $\mathbb{S}_q$ via the M\"{o}bius transformations of Definitions \ref{def:mobiusadditive} and \ref{def:mobius}.

We recall for each $n \in \mathbb{N}$ that $[n]=T^{q^n} - T$ and $D_n = [1]^{q^n} \cdots [n-1]^q [n]$. Set $D_0 = 1$. The exponential function and analogue of $2\pi i$ for the Carlitz action are given by \cite{Goss2} $$e(z) = \sum_{n=0}^\infty (-1)^n \frac{z^{q^n}}{D_n},\qquad \tilde{\pi} = \prod_{n=1}^\infty \left( 1 - \frac{[n]}{[n+1]}\right).$$ We state the following result for completeness, which is the analogue of the statement that $2\pi i \in \mathbb{C}$.

\begin{lem}$\tilde{\pi} \in V_q$.
\end{lem}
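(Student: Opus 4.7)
The plan is to realize $\tilde{\pi}$ as a product of a Carlitz torsion generator with a scalar that converges in $(\mathbb{F}_q(T))_\infty$; both factors then live in $V_q$, which is a field, so their product does too.

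First I would fix a nonzero $\lambda \in \Lambda_{q,T}$. By the Newton-polygon analysis carried out in the proof of Lemma \ref{Kummerconverse}, such a $\lambda$ satisfies $\lambda^{q-1} = -T$ and $v_\mathfrak{P}(\lambda) = 1$; in particular $\lambda \in \mathbb{T}_q \subset \mathbb{S}_q \subset V_q$, and $\lambda$ is a uniformizer at $\mathfrak{P}$. Next, I would verify that the infinite product $c := \prod_{n \geq 1}(1 - [n]/[n+1])$ converges in the complete field $(\mathbb{F}_q(T))_\infty$: since $v_{\mathfrak{p}_\infty}(T) = -1$, we have $v_{\mathfrak{p}_\infty}([n]) = -q^n$, so $v_{\mathfrak{p}_\infty}([n]/[n+1]) = q^n(q-1) \to \infty$, which is the non-Archimedean convergence criterion for an infinite product whose factors tend to $1$. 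Hence $c \in (\mathbb{F}_q(T))_\infty \subset V_q$.

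The third step is to invoke the classical Carlitz identity, namely that (up to a scalar in $\mathbb{F}_q^*$) one has $\tilde{\pi} = \lambda \cdot c$. This comes from matching the Weierstrass product $e(z) = z\prod_{a \in \mathbb{F}_q[T]\setminus\{0\}}(1 - z/(\tilde{\pi} a))$ against the functional equation $e(Tz) = Te(z) + e(z)^q$ of the Carlitz exponential, which forces $\tilde{\pi}^{q-1}/(-T)$ to equal precisely the $(q-1)$st power of the convergent product $c$. Combining the three steps, $\tilde{\pi} \in V_q$.

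The main obstacle is the identification $\tilde{\pi} = \alpha \lambda c$ for some $\alpha \in \mathbb{F}_q^*$: the product in the statement by itself lies in $(\mathbb{F}_q(T))_\infty$ by the convergence calculation above and contributes nothing nontrivial, so the content of the lemma really rests on recognizing the standard Carlitz factor $\lambda$ hidden in the normalization. A cleaner bypass is to show directly that $\tilde{\pi}^{q-1} \in (\mathbb{F}_q(T))_\infty$, for instance by proving that $\mathrm{Gal}(V_q/(\mathbb{F}_q(T))_\infty)$ acts on $\tilde{\pi}$ through the Teichm\"uller character $\sigma \mapsto \sigma(\lambda)/\lambda \in \mathbb{F}_q^*$ (which follows because $\sigma$ commutes with the Carlitz action as in \eqref{Galoiscommute} and fixes the defining product $c$). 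Once $\tilde{\pi}^{q-1}\in(\mathbb{F}_q(T))_\infty$ is established, Lemma \ref{Kummerconverse} places $\tilde{\pi}$ inside the Kummer closure $V_q$ of $(\mathbb{F}_q(T))_\infty$ with respect to $(q-1)$st roots, completing the argument.
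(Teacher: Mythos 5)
Your proposal is correct, and its second step --- the valuation computation $v_{\mathfrak{p}_\infty}\bigl([n]/[n+1]\bigr) = q^n(q-1) \to \infty$, giving convergence of the partial products of rational functions in the complete field $(\mathbb{F}_q(T))_\infty$ --- is precisely the paper's entire proof, with the ``one may easily show'' made explicit. The paper takes $\tilde{\pi}$ to be literally the displayed product, so the conclusion $\tilde{\pi} \in (\mathbb{F}_q(T))_\infty \subset V_q$ follows from that computation alone; your remark that the product by itself ``contributes nothing nontrivial'' is exactly the right diagnosis. The remaining material in your write-up --- identifying the standard Carlitz period as $\alpha\lambda c$ with $\lambda \in \Lambda_{q,T}\backslash\{0\}$ a $(q-1)$st root of $-T$, or alternatively showing $\tilde{\pi}^{q-1} \in (\mathbb{F}_q(T))_\infty$ and invoking Lemma \ref{Kummerconverse} --- is not needed for the lemma as literally stated, but it correctly treats the usual normalization of the period, which does carry the extra root factor; since $\Lambda_{q,T} \subset \mathbb{T}_q \subset V_q$, that normalization also lands in $V_q$. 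In that sense your version establishes a slightly stronger, and arguably more meaningful, statement than the one the paper actually proves.
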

\begin{proof} By definition of $[n]$, the partial products $\tilde{\pi}_N = \prod_{n=1}^N \left( 1 - [n]/[n+1]\right)$ lie in $\mathbb{F}_q(T)$. One may easily show that $\{\tilde{\pi}_N\}_{N \in \mathbb{N}}$ is convergent at $\mathfrak{p}_\infty$, whence $\lim_{N \rightarrow \infty} \tilde{\pi}_N = \tilde{\pi} \in (\mathbb{F}_q(T))_\infty \subset V_q$.
\end{proof}

The natural M\"{o}bius action of $\mathrm{GL}(V_q)=\mathrm{GL}_{q-1}((\mathbb{F}_q(T))_\infty)$ on $\mathfrak{H}_q$ from the left (Definition \ref{def:mobius}) induces an action of the discrete subgroup $\mathrm{SL}(V_q) = \mathrm{SL}_{q-1}((\mathbb{F}_q(T))_\infty)$ on $\mathfrak{H}_q$. We let $L,L'$ be lattices in the Bruhat-Tits building $\mathcal{B}_q$ of $\mathrm{SL}(V_q)$ which belong to the classes $\Lambda,\Lambda'$. By definition, the distance function for two such classes of lattices $\Lambda' \subset \Lambda$ is equal to the positive integer $n$ for which $$L/L' \cong \vartheta_\mathfrak{P}/ \lambda^n \vartheta_\mathfrak{P},$$ where $\lambda \in \Lambda_{q,T} \backslash\{0\}$ is a prime element of $V_q$ \cite[\S 1.1.1]{Ser}. This agrees with the metric on the homogeneous analogue of Drinfel'd \cite[\S 6]{Dri} for the distance function on $(\mathbb{F}_q(T))_\infty^{\otimes d}$ when $d=q-1$.

We now give the definition of a modular form. In the following definition, we use the phrase \emph{automorphic form} to mean that a function satisfies certain transformation rules according to a fixed weight. 

\begin{defn}[Modular forms for $\mathfrak{H}_q$] \label{def:Modular} A \emph{modular form} is a rigid analytic function defined on $\mathfrak{H}_q$ which is automorphic for $\mathrm{SL}(V_q)$. \end{defn}

As noted by Goss \cite[\S 4]{Goss2}, the weight must be divisible by $q-1 = \dim(V_q/(\mathbb{F}_q(T))_\infty)$, the dimension of the hyperbolic plane $\mathfrak{H}_q$ over the real line, in analogy to the statement that the weight of a classical modular form must be even, i.e., divisible by the dimension of the upper half-plane $\mathbb{H}$ over $\mathbb{R}$. We may now define the Eisenstein series.

\begin{defn}[Eisenstein series for $\mathfrak{H}_q$]\label{def:Eisenstein} Let $L \subset V_q$ be a lattice, and let $k \in \mathbb{N}$. The \emph{Eisenstein series} $E^{(q-1)k}(L)$ is defined as \begin{equation}\label{Eisenstein} E^{(q-1)k}(L) = \sum_{0 \neq \alpha \in L} \alpha^{-(q-1)k}.\end{equation} \end{defn}

\begin{rem}
	We note that Definitions \ref{def:Modular} and \ref{def:Eisenstein} are not the same as those given in \cite{Goss1,Goss2}, which express automorphic forms in terms of linear fractional transformations.
\end{rem}

By \cite[Theorem 2.19]{Goss3}, it is not hard to show that the series $E^{(q-1)k}(L)$ are holomorphic at the cusps. The lattice sum expression \eqref{Eisenstein} is particularly convenient, as one may consider the Eisenstein series in terms of the $\vartheta_\mathfrak{P}$-lattices of $V_q$ in the Bruhat-Tits building. These modular forms and Eisenstein series will be studied in a future work.

\section{Concluding remarks}
We have restricted our attention in this analysis to rank one Drinfel'd modules \cite{Hay2}, in order to highlight the vast similarities between the $q$-unit circle $\mathbb{S}_q$ and the classical unit circle $S^1$. In addition to the other open questions posed here, we invite the interested reader to consider the constructions appearing in this work for the Drinfel'd modules of higher rank \cite{Hay3}. Is there a similar analogy to $S^n$ ($n > 1$)? We leave this, too, as a fascinating open question.

The author sincerely thanks J. Lansky and A. Kontogeorgis for their unwavering encouragement and many thoughtful discussions.

\Addresses
\end{document}